\author[A. Hammerlindl]{Andy Hammerlindl}
\address{School of Mathematical Sciences, Monash University, Victoria 3800 Australia} \urladdr{ http://users.monash.edu.au/~ahammerl/}  \email{andy.hammerlindl@monash.edu}
\author[R. Potrie]{Rafael Potrie}
\address{CMAT, Facultad de Ciencias, Universidad de la Rep\'ublica, Uruguay}
\urladdr{www.cmat.edu.uy/$\sim$rpotrie}\email{rpotrie@cmat.edu.uy}
\author[M. Shannon]{Mario Shannon}
\address{CMAT, Facultad de Ciencias, Universidad de la Rep\'ublica, Uruguay \ AND \
Institute Math\`ematique de Burgogne, Dijon, France}
\email{shannon@cmat.edu.uy}
\title[Seifert manifolds admitting partially hyperbolic diffeomorphisms]{Seifert manifolds admitting partially hyperbolic diffeomorphisms}
\thanks{R.P.~and M.S.~were partially supported by CSIC group 618.
A.H.~and R.P.~were partially supported by the Australian Research Council.
R.P. was also partially supported by MathAmSud-Physeco.}
\date{\today}
\def\saveenum{\xdef\@savedenum{\the\c@enumi\relax}}
\def\resetenum{\global\c@enumi\@savedenum}
\newcommand{\bbC}{\mathbb{C}}
\newcommand{\bbD}{\mathbb{D}}
\newcommand{\bbR}{\mathbb{R}}
\newcommand{\bbZ}{\mathbb{Z}}
\newcommand{\Es}{E^s}
\newcommand{\Ec}{E^c}
\newcommand{\Eu}{E^u}
\newcommand{\Ecu}{E^{cu}}
\newcommand{\Ecs}{E^{cs}}
\newcommand{\cW}{\mathcal{W}}
\newcommand{\Ws}{W^s}
\newcommand{\Wu}{W^u}
\newcommand{\Wup}{W^u_+}
\newcommand{\Wcp}{W^c_+}
\newcommand{\Wsp}{W^s_+}
\newcommand{\inv}{^{-1}}
\newcommand{\cF}{\mathcal{F}}
\newcommand{\Fs}{\mathcal{F}^s}
\newcommand{\Fu}{\mathcal{F}^u}
\newcommand{\Fcu}{\mathcal{F}^{cu}}
\newcommand{\Fcs}{\mathcal{F}^{cs}}
\newcommand{\tFs}{\tilde{\mathcal{F}^s}}
\newcommand{\tFu}{\tilde{\mathcal{F}^u}}
\newcommand{\sans}{\setminus}
\newcommand{\subof}{\subset}
\newcommand{\invn}{^{-n}}
\newcommand{\Lam}{\Lambda}
\newcommand{\gam}{\gamma}
\newcommand{\Sig}{\Sigma}
\newcommand{\eps}{\varepsilon}
\newcommand{\al}{\alpha}
\newcommand{\bt}{\beta}
\newcommand{\UTSig}{T^1 \Sig}
\newcommand{\dist}{\operatorname{dist}}
\numberwithin{equation}{section}
\newtheorem{thm}[equation]{Theorem}
\newtheorem{teo}[equation]{Theorem}
\newtheorem{cor}[equation]{Corollary}
\newtheorem{lemma}[equation]{Lemma}
\newtheorem*{teo*}{Theorem}
\newtheorem*{prop*}{Proposition}
\newtheorem*{cor*}{Corollary}
\newtheorem*{goal*}{Goal}
\newtheorem*{teoA}{Theorem A}
\newtheorem*{teoB}{Theorem B}
\newtheorem*{teoC}{Theorem C}
\newtheorem{quest}{Question}
\newtheorem*{af}{Claim}
\newtheorem{prop}[equation]{Proposition}
\theoremstyle{remark}
\newtheorem*{remark} {\textbf{Remark}}
\newtheorem{fact}[equation]{\textbf{Fact}}
\begin{document}

\maketitle

\begin{abstract}
We characterize which 3-dimensional Seifert manifolds admit transitive
partially hyperbolic diffeomorphisms.
In particular, a circle bundle over a higher-genus surface
admits a transitive partially hyperbolic diffeomorphism
if and only if it admits an Anosov flow.
\end{abstract}

\section{Introduction}\label{SectionIntroduccion}
This paper deals with the problem of classification of partially hyperbolic diffeomorphisms in dimension 3. This program was initiated by the works of Bonatti-Wilkinson (\cite{BonattiWilkinson}) and Brin-Burago-Ivanov (\cite{BBI}). These results were also motivated by an informal conjecture, due to Pujals, which suggested that there were no new transitive partially hyperbolic examples to discover in dimension 3 (see \cite{BonattiWilkinson,CHHU,HP-Survey}).  

The first topological obstructions to the existence of partially hyperbolic diffeomorphisms comes from \cite{BI} where the existence of partially hyperbolic diffeomorphisms in $S^3$ is excluded. Other topological obstructions related with the existence of partially hyperbolic diffeomorphisms in 3-manifolds with fundamental group with polynomial growth can be derived from the work of \cite{BI}; see also \cite{Parwani}. In \cite{HPNil,HPSol}, the first two authors provided a classification of partially hyperbolic diffeomorphisms in 3-manifolds with solvable fundamental group giving an affirmative answer to Pujals' conjecture in such manifolds. Notice that these include Seifert fiber spaces whose fundamental group is of polynomial growth, in particular, circle bundles over the sphere and the torus.

Even though Pujals' conjecture turned out to be false (\cite{BPP,BGP,BGHP}), there is still (at least) one pertinent question which remains open and is in the spirit of this conjecture.

\begin{quest}
If a 3-manifold whose fundamental group is of exponential growth admits a partially hyperbolic diffeomorphism, does it admit an Anosov flow? 
\end{quest}

Since the examples in \cite{BPP,BGP,BGHP} are obtained starting with an Anosov flow, they do not provide a negative answer to the previous question. This question seems hard in view that for the moment we do not know which 3-manifolds admit Anosov flows and more or less the same obstructions we know for the existence of Anosov flows are obstructions for the existence of partially hyperbolic dynamics. 

This paper grew out of the motivation to answer the question in a specific family of manifolds: circle bundles over higher genus surfaces, or more generally, Seifert fiber spaces. The simplest sub-family where the question of existence of transitive partially hyperbolic diffeomorphisms was unknown was $\Sigma_g \times S^1$ where $\Sigma_g$ is a closed surface of genus $g \geq 2$. By a classical result of Ghys (\cite{Ghys}) we know that such a manifold does not admit Anosov flows. %

We deal in this paper with partially hyperbolic diffeomorphisms in Seifert fiber spaces. For notational  simplicity, and for the convenience of the reader not familiar with Seifert fiberings in all generality, we start by studying the (orientable) circle bundle case which is more elementary, though most of the main ideas extend rather directly to the Seifert fiber setting. (Others require some further study and we finish the paper with a section which extends the results to the more general setting.) The strategy is to show that the dynamical foliations that a partially hyperbolic diffeomorphism carries are (after isotopy) transverse to the circle fibers. This allows us to provide some obstructions on the topology of the circle bundle that we expose later. Let us mention that this work also motivates the study of \emph{horizontal foliations} on Seifert fiber spaces to understand partially hyperbolic dynamics and this is very much related with the study of representations of surface groups in $\mathrm{Homeo}
(
S^1)$, a rather active subject whose interest has been renewed in the recent years. (See e.g. \cite{Bowden,Mann} and references therein.) 

The consequence of our main results which is easiest to state is: 

\begin{teo*}
If a circle bundle over a surface $\Sigma$ of genus $\geq 2$ admits a transitive partially hyperbolic diffeomorphism, then it admits an Anosov flow. In particular, $\Sigma \times S^1$ does not admit transitive partially hyperbolic diffeomorphisms. 
\end{teo*}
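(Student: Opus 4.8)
The plan is to reduce the theorem to the structure of horizontal foliations on circle bundles. First I would use the general machinery (developed in the body of the paper and building on \cite{BI,HPNil,HPSol}) showing that a transitive partially hyperbolic diffeomorphism $f$ on a circle bundle $M\to\Sigma$ with $\Sigma$ of genus $\geq 2$ has branching foliations $\cW^{cs}$ and $\cW^{cu}$ tangent to $\Ecs$ and $\Ecu$; the key input is that, after an isotopy, these (branching) foliations can be made transverse to the circle fibers. Once this is in hand, the leaves of, say, $\Wcs$ project to the surface $\Sigma$ and the circle action gives a one-dimensional foliation transverse to a two-dimensional foliation, i.e., a horizontal foliation on $M$.

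The second step is to invoke the classical correspondence between horizontal foliations on a circle bundle and representations $\rho\colon \pi_1(\Sigma)\to \Homeo^+(\bbS)$: the existence of such a foliation forces a bound on the Euler number of the bundle. Concretely, by the Milnor–Wood inequality, the Euler number $e$ of a circle bundle over $\Sigma_g$ admitting a horizontal foliation satisfies $|e|\leq |\chi(\Sigma_g)| = 2g-2$. In particular $\Sigma\times\bbS$, which has Euler number $0$, is not immediately excluded by this alone; so the transitivity hypothesis must be used to produce \emph{two} transverse horizontal foliations (from $\Wcs$ and $\Wcu$) with extra dynamical coherence, and then to upgrade the conclusion: a circle bundle carrying the pair of foliations coming from a partially hyperbolic system carries an Anosov flow, by the construction of Anosov flows out of such transverse foliation data (in the spirit of the classification on circle bundles, cf.\ \cite{Ghys} for the obstruction on products).

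For the final clause about $\Sigma\times\bbS$, I would argue as follows: by Ghys' theorem \cite{Ghys}, $\Sigma\times\bbS$ admits no Anosov flow; combining this with the main implication just established — transitive partially hyperbolic $\Rightarrow$ Anosov flow — gives the contradiction, hence $\Sigma\times\bbS$ admits no transitive partially hyperbolic diffeomorphism.

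The main obstacle I expect is the first step: showing that the center-stable and center-unstable (branching) foliations can be isotoped to be transverse to the fibers. A priori the dynamical foliations could wrap around fibers or have Reeb-like components, and ruling this out requires a careful analysis in the universal cover, controlling how leaves of $\tWcs$ and $\tWcu$ interact with the $\bbR$-fibers, and using transitivity together with the exponential growth of $\pi_1(\Sigma)$ to exclude pathological configurations. Once transversality is secured, the passage to horizontal foliations, the Milnor–Wood bound, and the construction of the Anosov flow are comparatively standard, though the last one still requires identifying which bundles in the allowed range actually support the flow.
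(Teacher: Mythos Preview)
Your first step is right and matches the paper: transitivity forces the branching $cs$ and $cu$ foliations to have no vertical leaves, and Brittenham--Thurston then lets you isotope the fibering so the (approximating) foliations are horizontal.

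The gap is in your second step. The Milnor--Wood inequality only gives $|\mathrm{eu}(M)| \le |\chi(\Sigma)|$, and as you yourself note this does not exclude $\mathrm{eu}(M)=0$. Your proposed remedy --- ``two transverse horizontal foliations with extra dynamical coherence'' from which one can ``construct an Anosov flow'' --- is not an argument; there is no such general construction, and you are essentially assuming the conclusion. The paper does something sharper and more concrete: it exploits not merely that $\Fcs$ is horizontal, but that it carries a nowhere-vanishing tangent line field, namely $E^s$. Pulling back a vector field generating $E^s$ to the approximating horizontal foliation and then pushing it down to $\Sigma$ along the fibers produces a continuous fiber-preserving map $H\colon M \to T^1\Sigma$ covering the identity on $\Sigma$. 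A Hurwitz-type formula then gives $\deg(H)\cdot \mathrm{eu}(M) = \mathrm{eu}(T^1\Sigma) = \chi(\Sigma)$, so $\mathrm{eu}(M)$ \emph{divides} $\chi(\Sigma)$. This divisibility condition is exactly what characterizes finite covers of $T^1\Sigma$, and those are precisely the circle bundles supporting Anosov flows (lifts of the geodesic flow).

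For $\Sigma\times S^1$ the paper gives an even more direct argument you missed: restrict the horizontal vector field $\hat X^s$ to a section $\Sigma\times\{t\}$ and project along fibers to get a nowhere-zero vector field on $\Sigma$, contradicting $\chi(\Sigma)\neq 0$. Your route via Ghys is logically fine \emph{provided} the main implication is established, but that is where your argument breaks down.
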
  

The obstruction can be stated in terms of the \emph{Euler number} of the fiber bundle and says that a circle bundle over a higher genus surface admits a transitive partially hyperbolic diffeomorphism if and only if its Euler number divides the Euler characteristic of $\Sigma$. See Theorem A below. 

In the Anosov flow case, the corresponding statement for $\Sigma \times S^1$ follows from the classification result of Ghys \cite{Ghys} (see also \cite{Barbot} for the extension to Seifert fiber spaces). The proof we give here does not provide a classification, but is different even in the Anosov flow case. The results in this paper were partially announced in \cite{HP-Survey}. 

\subsection{Precise Statements of Results} 

\subsubsection{Circle bundles} Consider $M$ an orientable circle bundle over an orientable surface $\Sigma$ of genus $g\geq 2$.  See subsection \ref{ss.circlebundles} for precise definitions. 

Every circle bundle over $\Sigma$ can be obtained from $\Sigma \times S^1$ by removing a solid torus of the form $D \times S^1$ (with $D \subset \Sigma$ a two dimensional disk) and regluing by a map preserving the vertical fibers and giving rise to a new 3-manifold $M$ which is a circle bundle over $\Sigma$ in a non-trivial way. The number of turns a meridian of the form $\partial D \times \{t\}$ gives around a fiber after being sent by the gluing map is called the Euler number of the bundle and is denoted by $\mathrm{eu}(M)$ (see \cite[Chapter 4 of Book II]{CandelConlon} or \cite{Hatcher} for more detailed information about this concept). 

The unit tangent bundle $T^1 \Sigma$ over $\Sigma$ is well known to satisfy $\mathrm{eu}(T^1\Sigma) = \chi (\Sigma)$ where $\chi(\Sigma)$ denotes the Euler characteristic. Also, it is direct to show that $\mathrm{eu}(\Sigma \times S^1)=0$. 

The main result in this context is the following: 

\begin{teoA}
Let $M$ be an orientable circle bundle over $\Sigma$, an orientable surface of genus $g\geq 2$, admitting a transitive partially hyperbolic diffeomorphism. Then, there exists $k\in \mathbb{Z}$ such that $\mathrm{eu}(M) = \frac{\chi(\Sigma)}{k} \in \mathbb{Z}$. 
\end{teoA}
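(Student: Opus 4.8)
The plan is to exploit the partially hyperbolic splitting to produce an essential lamination or branching foliation on $M$ that is transverse to the circle fibers, and then read off the Euler-number constraint from the resulting action of $\pi_1(\Sigma)$ on a circle. First I would recall the Burago--Ivanov machinery (and its refinements in the work of the first two authors): a partially hyperbolic diffeomorphism of a $3$-manifold with (virtually) non-nilpotent fundamental group gives rise to $f$-invariant branching foliations $\Fcs$ and $\Fcu$ tangent to $\Ecs$ and $\Ecu$, together with almost-parallel true foliations obtained by collapsing the branching. Since $\pi_1(M)$ has exponential growth (it surjects onto $\pi_1(\Sigma)$ which is a surface group of genus $\geq 2$), these objects exist. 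The first real step is to show that, up to isotopy, one of these center-stable/center-unstable foliations — or rather a horizontal foliation extracted from them — is transverse to the $S^1$-fibers of the bundle; this is exactly the "dynamical foliations are transverse to the fibers" assertion flagged in the introduction, and it is where transitivity is used (to rule out the possibility that the foliation contains a vertical torus or a vertical annulus saturated by fibers, which would force the $f$-orbit of that torus to be finite and give a non-transitive piece).

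Next, once I have a foliation (or essential lamination) $\cF$ on $M$ transverse to the circle fibration $\pi\colon M\to \Sigma$, I would pass to the lift picture. A foliation transverse to all fibers of a circle bundle over a hyperbolic surface is, by a classical argument (Milnor--Wood type, cf. the reference to Candel--Conlon and to Ghys's work), equivalent to a representation $\rho\colon \pi_1(\Sigma)\to \Homeo^+(S^1)$ whose Euler class equals $\mathrm{eu}(M)$; equivalently, the leaves of $\tilde\cF$ in the universal cover $\widetilde M = \tilde\Sigma\times\bbR$ are graphs over $\tilde\Sigma$ and the deck action on the $\bbR$-factor descends to a circle action realizing $\mathrm{eu}(M)$. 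The Milnor--Wood inequality then already gives $|\mathrm{eu}(M)|\leq |\chi(\Sigma)|$. To upgrade this to divisibility, I would use that the foliation is not just topological but carries the invariant $\Fc = \Fcs\cap\Fcu$ one-dimensional center foliation, whose leaves project to a $\pi_1$-equivariant family of curves (or an $\bbR$-action / flow) on $\tilde\Sigma$ — this is where one expects the structure of a Seifert-fibered Anosov-like flow to emerge, pinning the representation to be (semi-conjugate to) the Fuchsian one up to a finite cover, which forces $\mathrm{eu}(M)$ to divide $\chi(\Sigma)$.

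Concretely, the divisibility should come out as follows: the center foliation, being one-dimensional and transverse to the fibers, together with the good position of $\Fcs,\Fcu$, organizes $\tilde\Sigma$ into a structure invariant under $\pi_1(\Sigma)$; extracting from it a pair of transverse (possibly singular) one-dimensional foliations on $\Sigma$ of "geodesic type", one concludes via an index/Euler-characteristic count (Poincar\'e--Hopf) that the relevant rotation-number cocycle for $\rho$ has Euler number $\chi(\Sigma)/k$ for the integer $k$ counting the multiplicity with which the dynamical foliation wraps the fibers. The cleanest route is: realize $M$ as a quotient of $T^1\Sigma$ (for which $\mathrm{eu}=\chi(\Sigma)$) or of a fiberwise $k$-fold cover thereof, using that a foliation transverse to fibers with the additional invariant transverse structures must be (finitely covered by, or a fiberwise quotient of) the weak-stable foliation of the geodesic flow — Ghys's rigidity in spirit, but needing only the weaker "transverse pair of foliations" input here.

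The main obstacle I anticipate is the transversality step: showing the branching foliations can be isotoped to be transverse to \emph{every} fiber. Branching foliations are not genuine foliations, so one must either first collapse to the Burago--Ivanov true foliations (losing invariance but keeping near-transversality) or argue directly with the branching leaves; and one must rule out, using transitivity plus the dynamics of $f$ on the leaf space, the appearance of vertical (fiber-saturated) sublaminations or Reeb-like components. Handling the interaction between the collapsed foliation and the genuine center foliation $\Fc$ carefully enough to extract the \emph{divisibility} (not merely the inequality $|\mathrm{eu}(M)|\le|\chi(\Sigma)|$) is the second delicate point, and is presumably where most of the paper's technical work lies.
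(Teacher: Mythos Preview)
Your overall architecture is right: first show the branching foliations have no vertical leaves (this is indeed where transitivity enters, via a chain-recurrence argument), then use Brittenham--Thurston to make the approximating foliation $\cW_\eps$ genuinely transverse to the fibers. You are also correct that the bulk of the technical work in the paper is in the first step. Where your proposal diverges from the paper is in the divisibility argument, and here you are overcomplicating matters.

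You propose to extract divisibility from the holonomy representation $\rho\colon\pi_1(\Sigma)\to\Homeo^+(S^1)$ together with the center foliation $\Fc=\Fcs\cap\Fcu$, invoking a Ghys-type rigidity to pin $\rho$ to a finite cover of the Fuchsian representation. The paper does something much more elementary and does not use $\Fc$, $\Fcu$, or any rigidity at all. The point is simply that the horizontal foliation $\cW_\eps$ carries a nowhere-vanishing tangent vector field: pull back $E^s$ (a line field, hence after passing to an orientation cover a vector field) along the leafwise diffeomorphism $h_\eps$. A horizontal foliation with a nonzero tangent vector field immediately gives a continuous fiber-preserving map $H\colon M\to T^1\Sigma$ covering the identity on $\Sigma$ (at each $x$, project the vector to $T_{p_\eps(x)}\Sigma$ and normalize). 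A one-line Hurwitz-type computation on fundamental groups then yields $\deg(H)\cdot\mathrm{eu}(M)=\mathrm{eu}(T^1\Sigma)=\chi(\Sigma)$, so $k=\deg(H)$ works. No representation-theoretic analysis, no transverse pair of foliations, no semiconjugacy to the geodesic flow is needed; the only input beyond horizontality is the existence of \emph{one} nonzero section of the tangent bundle of the foliation, which partial hyperbolicity hands you for free via $E^s$. Your remark that divisibility ``is presumably where most of the paper's technical work lies'' has it backwards: once the vertical leaves are excluded, divisibility is essentially a paragraph.
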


\begin{remark} This condition on the Euler number is equivalent to say that the circle bundle is a finite cover of the unit tangent bundle. 
\end{remark}

See subsection \ref{ss.PH} for the definition of partially hyperbolic diffeomorphism. In particular, we require that the tangent space $TM$ of $M$ splits into three non-trivial bundles, one uniformly contracting, one uniformly expanded and the third one satisfying a domination condition with respect to the other two. This is sometimes called in the literature \emph{pointwise strong partial hyperbolicity}. We say that a diffeomorphism $f$ is \emph{transitive} if there is a point $x\in M$ whose $f$-orbit is dense. 

A direct consequence is the following:

\begin{cor}Let $M$ be a circle bundle over $\Sigma$, a surface of genus $g\geq 2$, then $M$ admits a transitive partially hyperbolic diffeomorphism if and only if it admits an Anosov flow. 
\end{cor}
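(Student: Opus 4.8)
The plan is to derive the corollary from Theorem A together with the classical theory of Anosov flows on circle bundles, reducing everything to an arithmetic statement about the Euler number.

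\medskip

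First I would handle the forward direction. Suppose $M$ is a circle bundle over $\Sigma$ with $g \geq 2$ admitting a transitive partially hyperbolic diffeomorphism. By Theorem A, there is $k \in \mathbb{Z}$ with $\mathrm{eu}(M) = \chi(\Sigma)/k \in \mathbb{Z}$. As noted in the remark following Theorem A, this is exactly the condition that $M$ is a finite cover of the unit tangent bundle $T^1\Sigma$: indeed $\mathrm{eu}(T^1\Sigma) = \chi(\Sigma)$, and a circle bundle over $\Sigma$ whose Euler number divides $\chi(\Sigma)$ pulls back from $T^1\Sigma$ under an appropriate fiberwise covering (one should spell out that the base is kept fixed and only the fiber is covered with degree $|k|$, checking the Euler numbers multiply correctly). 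Now the geodesic flow on $T^1\Sigma$ is an Anosov flow, and the pullback of an Anosov flow under a finite covering that is a local diffeomorphism is again an Anosov flow (the flow lifts because the covering is along the orbit direction — one lifts the generating vector field); hence $M$ admits an Anosov flow.

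\medskip

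For the reverse direction, suppose $M$ admits an Anosov flow. Then in particular $M$ admits a partially hyperbolic diffeomorphism: the time-one map of an Anosov flow is partially hyperbolic, with the flow direction as center, and it is transitive whenever the flow is transitive. So it remains only to observe that any Anosov flow on a circle bundle over a higher-genus surface is transitive — this follows from Ghys's classification (\cite{Ghys}), which says that up to finite covers and orbit equivalence such a flow is the geodesic flow, and in any case Anosov flows on Seifert fiber spaces are known to be transitive. Alternatively one can bypass transitivity issues entirely by invoking the cited results directly.

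\medskip

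The only genuine subtlety — which I would expect to be the main point requiring care rather than a deep obstacle — is the topological identification in the forward direction: translating ``$\mathrm{eu}(M)$ divides $\chi(\Sigma)$'' into ``$M$ is a finite cover of $T^1\Sigma$,'' and then checking that an Anosov flow (the geodesic flow on $T^1\Sigma$, or more precisely any Anosov flow it carries) lifts to the cover. Since the covering $M \to T^1\Sigma$ can be taken to be fiberwise (the map $z \mapsto z^{|k|}$ on each $S^1$ fiber, up to the sign/orientation bookkeeping governed by whether $k$ is positive or negative, which corresponds to also composing with the flip of the Seifert fibration), and since the Anosov vector field on $T^1\Sigma$ is transverse to the fibers — being a horizontal foliation's tangent line field perturbed, or concretely the geodesic spray — it lifts to a nonsingular vector field on $M$ whose flow is Anosov because hyperbolicity is a local, covering-invariant condition. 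This is all standard, and the real content of the corollary is Theorem A itself.
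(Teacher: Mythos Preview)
Your proposal is correct and follows essentially the same approach as the paper: both directions are handled identically---Theorem A plus lifting the geodesic flow to a finite cover for the forward direction, and Ghys's result plus the time-one map for the reverse direction. One small imprecision: the time-one map of a transitive Anosov flow need not be transitive in general (suspensions are the issue), so the paper is slightly more careful in noting that Ghys also rules out the suspension case; you invoke Ghys anyway, so this does not affect the validity of your argument.
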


\begin{proof} The time one map of an Anosov flow is partially hyperbolic and if the flow is transitive and not a suspension, then the time one map is also transitive. The main result of \cite{Ghys} implies that an Anosov flow in a circle bundle must be transitive (and not a suspension). Thus, the reverse implication is trivial. The first one follows directly since by lifting the geodesic flow in negative curvature to finite covers one can construct Anosov flows in all bundles with the corresponding Euler numbers admitted by Theorem A. 
\end{proof}

There are many conditions weaker than transitivity that also allow us to obtain the same result. These appear in the statement of Theorem \ref{teo-novert} below, so beyond transitivity we also obtain: 

\begin{teoB}
Let $M$ be a circle bundle over $\Sigma$, a surface of genus $g\geq 2$, admitting a partially hyperbolic diffeomorphism which
\begin{itemize}
\item either is \emph{dynamically coherent} or 
\item \emph{homotopic to identity}. 
\end{itemize} 
Then, there is $k \in \mathbb{Z}$ such that $\mathrm{eu}(M) = \frac{\chi(\Sigma)}{k} \in \mathbb{Z}$. 
\end{teoB}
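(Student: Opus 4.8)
The plan is to reduce Theorem B to Theorem A by showing that in either of the two listed situations the partially hyperbolic diffeomorphism $f$ can be upgraded, or replaced, by one to which the hypotheses of Theorem~A (or of the more general Theorem~\ref{teo-novert} alluded to in the text) apply — the essential point being that the conclusion only depends on the manifold $M$, not on the particular diffeomorphism. So the whole argument is really about producing, from a partially hyperbolic diffeomorphism satisfying one of the two bullets, enough structure (an invariant branching/genuine foliation transverse to the fibers, after isotopy) to run the Euler-number computation. I would first recall the common output of the circle-bundle analysis: a partially hyperbolic $f$ on $M$ carries dynamical foliations (or at least branching foliations, by Burago--Ivanov) $\Wcs,\Wcu$ whose leaves, after an isotopy of $M$, can be made transverse to the circle fibers, and that transversality forces the $\mathrm{eu}(M)\mid\chi(\Sigma)$ relation via the standard obstruction for horizontal foliations on Seifert spaces (Milnor--Wood type inequality together with the fact that a taut horizontal foliation realizes the bundle as a finite cover of $T^1\Sigma$).

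For the \emph{dynamically coherent} case the reduction is almost immediate: dynamical coherence gives honest $f$-invariant foliations $\Wcs,\Wcu$ tangent to $\Ecs,\Ecu$, so one does not even need the Burago--Ivanov branching-foliation machinery; one feeds these directly into the transversality argument of Theorem~\ref{teo-novert}. The real content is the \emph{homotopic to identity} case. Here I would argue as follows. If $f$ is homotopic to the identity, then $f$ acts trivially on $\pi_1(M)$ and in particular fixes (up to homotopy) the fiber class. Lift $f$ to $\tf$ on the universal cover $\tM$; the fact that $f\simeq\id$ means $\tf$ stays a bounded distance from the identity (or commutes with the deck group in the appropriate sense), which lets one control the global behavior of the lifted branching foliations $\tWcs,\tWcu$. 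The key mechanism is a \emph{no-vertical} dichotomy: either some center-stable or center-unstable leaf contains a whole circle fiber (the "vertical" alternative), or none does and then leaves are transverse to fibers after isotopy. One must rule out the vertical alternative; being homotopic to the identity is exactly what prevents a vertical periodic torus or a fiberwise-invariant leaf from producing a contradiction with partial hyperbolicity — a leaf saturated by fibers would be preserved (in the branching sense) and the dynamics along it would have to be both isometric-ish (fiber direction) and hyperbolic, which is impossible for the domination to hold in the relevant direction. This is precisely the place where I expect the main obstacle to lie: carefully translating "$f\simeq\id$" into the statement that the branching foliations admit no vertical leaves, and handling the branching (non-Hausdorff) subtleties of the Burago--Ivanov leaves in the universal cover, rather than assuming genuine foliations.

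Concretely, the steps in order would be: (1) invoke Burago--Ivanov to obtain $f$-invariant branching foliations $\Wcs$, $\Wcu$ tangent to $\Ecs$, $\Ecu$ (for the coherent case, genuine foliations, skipping this step); (2) pass to the universal cover, use the homotopy-to-identity hypothesis to show the lifted diffeomorphism has bounded displacement and its action on leaf spaces is controlled; (3) prove the no-vertical-leaf statement — no leaf of $\Wcs$ or $\Wcu$ is a union of circle fibers — using that a vertical leaf would carry incompatible (isometric vs.\ exponentially stretched) dynamics; (4) conclude that the branching foliations are horizontal after an isotopy of $M$, i.e.\ can be made transverse to the fibers; (5) apply the Milnor--Wood / Eisenbud--Hirsch--Neumann obstruction for horizontal foliations on Seifert fibered spaces, which yields $\mathrm{eu}(M)=\chi(\Sigma)/k$ for some $k\in\bbZ$. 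Steps (3) and (4) carry all the weight; (1), (2) and (5) are citations or routine once the setup of Theorem~\ref{teo-novert} is in place, and indeed Theorem~B should be stated as a corollary of Theorem~\ref{teo-novert} once that theorem's hypotheses are shown to be implied by each of the two bullets.
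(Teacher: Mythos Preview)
Your high-level architecture is right --- the theorem does factor through the ``no vertical leaves'' statement (Theorem~\ref{teo-novert}) followed by an Euler-number computation --- but there are two genuine gaps, and one misplaced expectation about difficulty.

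First, you have the two cases reversed in difficulty. The dynamically coherent case is \emph{not} ``almost immediate''; it is the subtlest part of the paper and occupies all of Section~\ref{sec-coherent}. Having honest foliations instead of branching ones does not by itself rule out vertical leaves --- the argument requires finding an invariant annulus in a fixed vertical leaf $L$ (Lemma~\ref{lema-anulus}), producing a fixed center circle via a graph-transform argument (Proposition~\ref{prop-circleexist}, which uses coherence in an essential way to control how nearby center curves stay close), and then a sequence of lemmas on unstable holonomy culminating in a contradiction with Lemma~\ref{lem-twopoints}. By contrast, the homotopic-to-identity case (Subsection~\ref{ss.homotId}) is short: bounded displacement of the lift plus the fact that vertical leaves are quasi-isometrically embedded and parabolic (Lemma~\ref{l.verticallamination}) forces polynomial diameter growth of iterates of a stable arc, while length grows exponentially, contradicting Proposition~\ref{p-volvslength}. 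Your proposed mechanism for that case (``isometric-ish fiber direction vs.\ hyperbolic, impossible for domination'') is not the actual obstruction and would not survive: partial hyperbolicity does not a priori forbid vertical leaves --- the example of \cite{HHU-example} on $\mathbb{T}^3$ has them.

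Second, and more seriously, your step~(5) does not give the conclusion. The Milnor--Wood inequality only yields $|\mathrm{eu}(M)|\le|\chi(\Sigma)|$; it does \emph{not} force $\mathrm{eu}(M)$ to divide $\chi(\Sigma)$, and there are circle bundles with horizontal foliations whose Euler number fails this divisibility (see Corollary~\ref{cor:turnex}). The paper exploits a strictly stronger piece of structure: the horizontal foliation carries a nowhere-vanishing tangent vector field (coming from $E^s$), and this lets one build a fiber-preserving map $H:M\to T^1\Sigma$ (Theorem~\ref{teo-horizimpliesanosov}); the Hurwitz-type formula of Proposition~\ref{prop-Hurwitz} then gives $\mathrm{deg}(H)\cdot\mathrm{eu}(M)=\mathrm{eu}(T^1\Sigma)=\chi(\Sigma)$. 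Without this vector-field step your argument stops at an inequality, not the divisibility claimed in Theorem~B.
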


See subsection \ref{ss.PH} for a definition of dynamical coherence: it is related to the integrability properties of the bundles involved in the partially hyperbolic splitting. 

\subsubsection{Seifert fiber spaces}

We now consider the more general case of partially hyperbolic systems defined
on Seifert fiber spaces.
We give a brief description of Seifert fiberings and orbifolds here
and refer the reader to 
\cite{sco1983geometries} and \cite{cho2012geometric}
for details.

Consider the manifold with boundary $\bbD \times [0,1]$
equipped with a fibering where
each fiber is of the form $\{z\} \times [0,1]$ for $z \in \bbD$.
Here $\bbD$ is the unit disc in $\bbC$.
A \emph{standard fibered torus} is a solid torus
defined by gluing $\bbD \times \{1\}$ to $\bbD \times \{0\}$
by a map of the form
\[
    R_{p/q}(z,1) = (z \cdot e^{2 \pi i p/q}, 0)
\]
where $p/q$ is a rational number.
If $p/q$ is an integer, we call this a
\emph{trivial} fibered torus.
A \emph{Seifert fibering} on a closed 3-manifold $M$
is a smooth foliation of $M$ by circles
such that each point $x$ in $M$ has a neighbourhood
where the foliation is equivalent to that
of a standard fibered torus.
The fiber through $x$ is \emph{regular} if there is a
neighbourhood of $x$ where the foliation is equivalent to that of
a trivial fibered torus.
If a fiber is not regular, it is called \emph{exceptional}.
As $M$ is compact,
it has finitely many exceptional fibers.

Roughly speaking,
an orbifold is an object that locally resembles
a quotient of $\bbR^n$ by a finite group.
This paper only considers 2-dimensional orbifolds
associated to Seifert fiberings
and we may use the following definition specialized to this case.
First, for an integer $\al \ge 1,$
let $\bbD_\al$ be the quotient of $\bbD$
by the rotation $z \mapsto z \cdot e^{2 \pi i/\al}.$
Note that $\bbD_\al$ is a topological space
and $\bbD_\al \sans \{0\}$ is a smooth surface.
An \emph{orbifold} $\Sig$ is a closed topological surface
equipped with an atlas of charts such that
each chart is of the form
$(\varphi, U, \al)$
where $\al \ge 1$ is an integer,
$U \subof \Sig$ is open, and
$\varphi : U \to \bbD_\al$ is a homeomorphism;
the charts cover $\Sig;$ and
if $(\varphi, U, \al_U)$ and $(\psi, V, \al_V)$
are distinct charts in the atlas,
then $\varphi(U \cap V)$ and $\psi(U \cap V)$
are smooth manifolds and
\begin{math}
    \psi \circ \varphi \inv : \varphi(U \cap V) \to \psi(U \cap V)
\end{math}
is a diffeomorphism.
(In particular,
if $\al_U > 1$ then $\varphi \inv(0) \notin V$ and
if $\al_V > 1$ then $\psi \inv(0) \notin U.$)

We call $x \in \Sig$ a \emph{cone point} if the atlas has a chart
$(\varphi, U, \al)$ such that $\varphi(x) = 0$ and $\al > 1.$
If $M$ is a 3-manifold with a Seifert fibering,
let $\Sig$ be the topological space obtained by quotienting
each circle down to a point and note that $\Sig$
naturally has the structure of an orbifold.

Apart from a few known ``bad'' orbifolds,
every orbifold may be equipped with a uniform geometry which is either
elliptic, parabolic, or hyperbolic
\cite[Theorems 2.3 and 2.4]{sco1983geometries}.
Even though an orbifold $\Sig$
does not have the structure of a smooth manifold,
its unit tangent bundle
$\UTSig$ is a well-defined smooth 3-manifold and the natural projection
$\UTSig  \to  \Sig$ defines a Seifert fibering
\cite[\S 5]{ehn1981transverse}.
One may also define the geodesic flow on $\UTSig$.

%
%
If an orbifold is bad, elliptic, or parabolic, then
a Seifert fiber space over this orbifold
has one of the model geometries corresponding to
having virtually nilpotent fundamental group
\cite[Theorem 5.3]{sco1983geometries}.
Partially hyperbolic systems in these geometries are already
well understood
\cite{HPSol}.
Hence, we only consider hyperbolic orbifolds here.
Theorems {A} and {B} then have the following generalization.

\begin{teoC}%
    Let $M$ be a Seifert fiber space over a hyperbolic orbifold $\Sig$
    such that $M$ admits a partially hyperbolic diffeomorphism which is either
    transitive, dynamically coherent, or homotopic to the identity.
    Then $M$ finitely covers the unit tangent bundle of $\Sig$.
\end{teoC}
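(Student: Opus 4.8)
The plan is to follow the architecture of the proofs of Theorems~A and~B, whose real content is: (i)~produce, after an isotopy, a branching foliation associated to $f$ that is transverse to the circle fibers, and (ii)~extract from such a ``horizontal'' (branching) foliation a rigidity statement on the Euler number. For Theorem~C both steps must be carried out over a hyperbolic orbifold rather than a surface.

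For step~(i) I would re-run the arguments of the body of the paper on the Seifert fiber space $M$. Away from the exceptional fibers nothing changes, since there $M$ looks like a trivially fibered solid torus. The new point is the behaviour near the finitely many exceptional fibers, where a neighbourhood is a quotient of $\bbD\times S^1$ by a finite cyclic group of rotations of the $\bbD$-factor; one must check that the branching foliation can be isotoped to be transverse to the central (exceptional) fiber, which amounts to controlling the leaves in the solid-torus cover equivariantly under the local isotropy. Each of the three hypotheses (transitive, dynamically coherent, homotopic to the identity) should feed into this exactly as in the circle-bundle case.

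For step~(ii), a branching foliation transverse to every fiber of the Seifert fibration $M \to \Sigma$ is a horizontal (branching) foliation, so the Eisenbud--Hirsch--Neumann theory of \cite{ehn1981transverse} applies: it yields a holonomy representation $\rho \colon \pi_1^{\mathrm{orb}}(\Sigma) \to \Homeo^+(S^1)$ (or its universal central extension) whose Euler number, corrected by the rotation numbers at the cone points, is pinned to the Seifert invariants of $M$, and the Milnor--Wood inequality bounds it by $|\chi^{\mathrm{orb}}(\Sigma)|$. As in the proof of Theorem~A, the dynamical origin of this foliation --- $f$ descends to an action on $\Sigma$ and $\rho$ is, up to semiconjugacy, the boundary action of a cocompact Fuchsian group --- forces the bound to be attained up to an integer divisor, and an equivalent way to phrase that conclusion is precisely that $M$ finitely covers the unit tangent bundle $T^1\Sigma$. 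Alternatively, one may pass at the outset to a finite cover $\hM \to M$ that is an honest circle bundle over a finite manifold cover $\hat\Sigma \to \Sigma$ (this exists because hyperbolic $2$-orbifolds are finitely covered by surfaces and the fiber subgroup $\bbZ\le\pi_1(M)$ is characteristic), lift an iterate of $f$, apply Theorems~A--B to $(\hM,\tf)$, and then descend using the naturality of $T^1(-)$ under orbifold covers.

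The main obstacle is the interface between the dynamics and the orbifold singularities: everything in step~(i) near the exceptional fibers, and the correct accounting in step~(ii) of the cone-point rotation numbers --- showing they are forced to be the ``extremal'' values realized by $T^1\Sigma$ rather than merely satisfying the Milnor--Wood inequality. In the finite-cover approach the analogous difficulty migrates to the descent: one must match not just Euler numbers but the full Seifert data, and one must handle the transitive case with care, since transitivity of $f$ need not pass to $\tf$ --- there the object that survives to $\hM$ is the transverse branching foliation, not transitivity, and one uses that the rigidity argument behind Theorem~A only consumes the transverse branching foliation.
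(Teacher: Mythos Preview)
Your alternative route---pass to a finite cover that is an honest circle bundle, apply Theorems~A--B there, and descend---is essentially the paper's approach, but with one important structural difference. The paper uses the finite cover $M_1\to M_2$ \emph{only for step~(i)}: it lifts an iterate of $f$ and the branching foliation $\Fcs$, observes that the relevant hypothesis (chain recurrence, dynamical coherence, or homotopic to identity) persists on the cover, applies Theorem~3.1 upstairs to conclude $\Fcs_1$ has no vertical leaves, and then pushes that single conclusion back down (trivially, since the cover is fiber-preserving). Step~(ii) is then carried out \emph{directly on the Seifert space $M_2$}: Brittenham--Thurston (Theorem~\ref{teo-brit}) already holds for Seifert fiberings, the construction of the nowhere-vertical vector field from $E^s$ in Theorem~\ref{teo-horizimpliesanosov} is purely local, and the conclusion that $M_2$ finitely covers $T^1\Sigma_2$ is delegated to the companion note~\cite{ham20XXhorizontal}. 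This hybrid neatly sidesteps both obstacles you flagged: there is no descent of Seifert invariants to worry about, and the only thing that must survive to the cover is one of the hypotheses of Theorem~3.1, not the conclusion of Theorem~A.

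Your primary route and your account of step~(ii) contain a real gap. The obstruction in Theorems~A--B is \emph{not} the Milnor--Wood inequality, and nothing in the paper shows that the holonomy is semiconjugate to a Fuchsian boundary action. What makes the argument work is that a horizontal branching foliation coming from a partially hyperbolic splitting carries a non-vanishing tangent line field, namely $E^s$; this is what produces the fiber-preserving map $M\to T^1\Sigma$ and the divisibility $\mathrm{eu}(M)\mid\chi(\Sigma)$ via Proposition~\ref{prop-Hurwitz}. A mere horizontal foliation only gives $|\mathrm{eu}(M)|\le|\chi(\Sigma)|$, which is far too weak (cf.\ Corollary~\ref{cor:turnex}). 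So your step~(ii), whether done on $M$ or on the cover, must go through the vector-field construction, not through Milnor--Wood alone. Finally, your worry about transitivity is handled in the paper simply by noting that property~(1) of Theorem~3.1 is chain recurrence, which lifts to finite covers; dynamical coherence and being homotopic to the identity lift as well.
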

Barbot showed that such
manifolds are exactly the Seifert fiber spaces which support Anosov flows
\cite{Barbot}.

\begin{cor} \label{cor:seifanosov}
    A Seifert fiber space over a hyperbolic orbifold
    admits a transitive partially hyperbolic diffeomorphism
    if and only if it admits an Anosov flow.
\end{cor}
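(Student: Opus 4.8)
The plan is to deduce the corollary as a formal consequence of Theorem C together with Barbot's classification of Anosov flows on Seifert fiber spaces. Recall that Barbot shows a Seifert fiber space over a hyperbolic orbifold $\Sig$ supports an Anosov flow if and only if it finitely covers $\UTSig$, and that in this case one may take the flow to be the lift, under the covering, of the geodesic flow on $\UTSig$. Two auxiliary facts from the work of Ghys and Barbot will also be needed: an Anosov flow on such a manifold is automatically transitive, and it is never topologically equivalent to a suspension of an Anosov diffeomorphism (suspension manifolds are torus bundles over $S^1$, whose associated orbifold is Euclidean, not hyperbolic).

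For the reverse implication, I would assume $M$ carries an Anosov flow $\phi_t$. By the facts just recalled, $\phi_t$ is transitive and not a suspension, and hence its time-one map $\phi_1$ is transitive: a dense flow orbit which is not a single periodic orbit gives a dense $\phi_1$-orbit. Since the time-one map of any Anosov flow is partially hyperbolic (with $\Ec$ the flow direction), $M$ admits a transitive partially hyperbolic diffeomorphism. This is exactly the argument already used for circle bundles in the corollary following Theorem A.

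For the direct implication, I would assume $M$ admits a transitive partially hyperbolic diffeomorphism. Theorem C then gives that $M$ finitely covers $\UTSig$. Since the geodesic flow on $\UTSig$ is Anosov and Anosov flows lift to finite covers, $M$ carries an Anosov flow; alternatively, one may simply quote Barbot's characterization, the finite-cover hypothesis being supplied by Theorem C. This completes the equivalence.

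The content of the corollary therefore reduces, given Theorem C, to bookkeeping: the only delicate points are the imported facts that an Anosov flow on a Seifert fiber space over a hyperbolic orbifold is transitive and is not a suspension, which are precisely what allows one to pass between the flow and its time-one map in both directions. I do not expect any genuine obstacle here; the real work — and hence any real obstacle — lies entirely in the proof of Theorem C, namely in showing that the dynamical foliations can be isotoped to be transverse to the fibers and in extracting the Euler-number (equivalently, finite-cover) conclusion from that transversality.
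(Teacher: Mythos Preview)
Your proposal is correct and matches the paper's approach exactly: the paper simply states the corollary immediately after Theorem C, noting that Barbot characterized the Seifert fiber spaces over hyperbolic orbifolds admitting Anosov flows as precisely those finitely covering $\UTSig$, and the reverse direction is handled just as in the circle-bundle corollary via the time-one map. One small remark: your one-line justification that ``a dense flow orbit which is not a single periodic orbit gives a dense $\phi_1$-orbit'' is not literally true in general---the actual reason the time-one map is transitive is Plante's dichotomy (a transitive Anosov flow that is not a suspension is topologically mixing)---but the paper glosses over this point at the same level, so this is not a gap relative to the paper's proof.
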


Some of the most studied
orbifolds are the so-called ``turnovers.''
An orbifold is a \emph{turnover}
if it has exactly three cone points and
the underlying topological space is a sphere.
Seifert fiberings over turnovers were the last family for which the
Milnor-Wood inequalities were generalized \cite{nai1994foliations},
and Brittenham showed specifically
that these manifolds do not support foliations with vertical leaves
\cite{Brittenham}.
In this setting, we may therefore state a result which does not rely on
transitivity or any dynamical assumption other than partial hyperbolicity.

\begin{thm} \label{thm:turnover}
    A Seifert fiber space over a turnover
    admits a partially hyperbolic diffeomorphism
    if and only if it admits an Anosov flow.
\end{thm}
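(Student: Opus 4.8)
The plan is as follows. The ``if'' direction needs no work: if $M$ carries an Anosov flow, its time-one map is a partially hyperbolic diffeomorphism (the flow direction serving as $\Ec$), and since the statement asks only for \emph{some} partially hyperbolic diffeomorphism no transitivity of the flow is needed. For the converse I restrict to hyperbolic turnovers, the elliptic and parabolic cases falling under the standing reduction to hyperbolic orbifolds discussed above: there $M$ has virtually nilpotent fundamental group, supports no Anosov flow, and the behaviour of partially hyperbolic diffeomorphisms is available from \cite{HPSol}. So let $\Sig$ be a hyperbolic turnover and $f\colon M\to M$ partially hyperbolic. Replacing $f$ by an iterate and $M$ by a finite cover of bounded degree --- still a Seifert fiber space over a hyperbolic orbifold --- I may assume $\Es,\Ec,\Eu$ are orientable and $Tf$-invariant, so that by the construction of invariant branching foliations \cite{BI,BBI} there are $f$-invariant branching foliations $\Wcs$ and $\Wcu$ tangent to $\Es\oplus\Ec$ and $\Ec\oplus\Eu$, each $C^0$-approximated by genuine topological foliations $\Fcs_\eps$, $\Fcu_\eps$ of $M$ whose leaves are uniformly close to those of the corresponding branching foliation.

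The step where turnovers are special is this. In the proof of Theorem C the hypotheses of transitivity, dynamical coherence, or homotopy to the identity are used \emph{only} to exclude a vertical leaf --- a leaf which is a union of circle fibers, necessarily a vertical torus or Klein bottle --- in $\Wcs$ or $\Wcu$ (equivalently, in the approximating foliations); once such leaves are ruled out the argument makes the branching foliations horizontal and deduces, through the generalized Milnor--Wood inequality for Seifert fiber spaces \cite{nai1994foliations}, that $M$ finitely covers $\UTSig$. The plan is therefore to isolate from the proof of Theorem C the dynamics-free implication ``$f$ partially hyperbolic on a Seifert fiber space over a hyperbolic orbifold, with no vertical leaf in the center-stable and center-unstable (branching, or equivalently approximating) foliations $\Longrightarrow$ $M$ finitely covers $\UTSig$,'' and then to supply its hypothesis directly from Brittenham's theorem \cite{Brittenham}: since \emph{no} foliation of a Seifert fiber space over a turnover has a vertical leaf, the foliations $\Fcs_\eps$, $\Fcu_\eps$ have none, and hence neither do $\Wcs$ and $\Wcu$ --- a vertical torus tangent to a branching foliation would persist, up to isotopy, as a compact leaf in every sufficiently fine approximating foliation, against Brittenham. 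Thus the implication applies with no dynamical assumption and $M$ finitely covers $\UTSig$.

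It then remains to combine this with Barbot's classification \cite{Barbot}, according to which a Seifert fiber space admits an Anosov flow precisely when it finitely covers a unit tangent bundle; this gives an Anosov flow on $M$. Undoing the preliminary passage to an iterate and a finite cover is routine and is done exactly as at the end of the proof of Theorem C: the Seifert fibration lifts, an orbifold covering of $\Sig$ induces a finite covering of unit tangent bundles, and the relevant conclusion is read off at the level of the rational Euler number.

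The main obstacle is the first task of the second paragraph: extracting from the proof of Theorem C the precise no-vertical-leaf implication and checking that the three dynamical hypotheses are genuinely used \emph{only} to forbid vertical leaves --- and nowhere, for instance, to guarantee tautness or the absence of Reeb components of the approximating foliations, which one needs in order to invoke the horizontal/vertical dichotomy for foliations of Seifert fiber spaces (these properties should hold automatically for approximating foliations of branching foliations of partially hyperbolic diffeomorphisms, but this must be confirmed). The secondary, more technical, point is transferring Brittenham's conclusion, stated for honest foliations, to the branching foliations themselves; the cleanest route is to argue that a vertical leaf of $\Wcs$ or $\Wcu$ yields an embedded $\pi_1$-injective vertical torus that survives, up to isotopy, as a compact leaf in every sufficiently fine approximating foliation.
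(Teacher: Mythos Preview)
Your approach matches the paper's: handle the non-hyperbolic turnover via \cite{HPSol}, use Brittenham's theorem to supply the ``no vertical leaves'' conclusion that replaces the dynamical hypotheses of Theorem~C, and then run the rest of the Theorem~C argument to conclude that $M$ finitely covers $\UTSig$, hence admits an Anosov flow. The paper's proof is equally terse and defers the orientability bookkeeping to ``as in the last proof.''

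Two small corrections are worth flagging. First, vertical leaves need not be compact tori or Klein bottles; in fact, since the branching foliations have no torus leaves (Theorem~\ref{teoHHU}), vertical leaves here are non-compact cylinders. This does not affect your argument, since Brittenham's result rules out \emph{all} vertical leaves, compact or not, but your proposed mechanism for transferring a vertical leaf of $\Wcs$ to one of $\Fcs_\eps$ (``a vertical torus\ldots would persist as a compact leaf'') is not the right picture --- the transfer goes via the map $h_\eps$ of Theorem~\ref{teoBI2}, which carries a vertical leaf of $\cW_\eps$ onto a vertical leaf of $\Fcs$. Second, the Milnor--Wood/Naimi inequality alone yields only $|\mathrm{eu}(M)|\le|\chi(\Sigma)|$; to get that $M$ \emph{finitely covers} $\UTSig$ one needs the stronger divisibility condition, and this comes from the horizontal vector field construction (pushing $\Es$ along $h_\eps$ and projecting to the base to build a fiber-preserving map $M\to\UTSig$), as in Section~\ref{s.proofCircle} and \cite{ham20XXhorizontal}. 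Finally, be careful with the orienting cover: after lifting, the base orbifold may no longer be a turnover, so Brittenham does not apply directly upstairs; the paper applies Brittenham to the approximating foliation on $M$ itself, and the passage between $M$ and its cover is handled by the orientability argument (\cref{lemma:orient}) exactly as in the proof of Theorem~C.
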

\begin{cor} \label{cor:turnex}
    There are infinitely many Seifert fiber spaces which
    support horizontal foliations,
    but do not support partially hyperbolic diffeomorphisms.
\end{cor}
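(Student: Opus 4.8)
The plan is to combine Theorem~\ref{thm:turnover} and Barbot's theorem \cite{Barbot} with an explicit family of Seifert fiber spaces over turnovers that carry horizontal foliations. The dynamical ingredient is the following observation: if a Seifert fiber space $M$ over a hyperbolic turnover $\Sig$ admits a partially hyperbolic diffeomorphism, then by Theorem~\ref{thm:turnover} it admits an Anosov flow, hence by \cite{Barbot} it finitely covers $T^1\Sig$; as a finite Seifert covering scales the Euler number by a positive rational and $\mathrm{eu}(T^1\Sig)=\chi(\Sig)\ne 0$ (the orbifold Euler characteristic being negative), this forces $\mathrm{eu}(M)\ne 0$. Equivalently: a Seifert fiber space over a turnover with vanishing Euler number admits no partially hyperbolic diffeomorphism. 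So it suffices to exhibit infinitely many pairwise non-homeomorphic Seifert fiber spaces over turnovers that have vanishing Euler number and carry a horizontal foliation.

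For each odd integer $\al\ge 5$ let $\Sig_\al=S^2(\al,\al,\al)$; this is a hyperbolic turnover. Let $M_\al$ be the Seifert fiber space over $\Sig_\al$ with unnormalized Seifert invariants $\{-1;(\al,1),(\al,1),(\al,\al-2)\}$ --- the data is admissible since $\al$ odd gives $\gcd(\al,\al-2)=1$, and $\mathrm{eu}(M_\al)=0$ because $-1+\tfrac1\al+\tfrac1\al+\tfrac{\al-2}\al=0$. To see that $M_\al$ carries a foliation transverse to the fibers I would use the Eisenbud--Hirsch--Neumann correspondence \cite{ehn1981transverse} between such foliations and representations of the fundamental group of the base with its cone points removed into $\Homeo^+(\bbS)$: here one takes the representation of the thrice-punctured sphere group sending the three peripheral loops to the rotations of $\bbS$ by $1/\al$, $1/\al$ and $-2/\al$ (these compose to the identity, so the representation is well defined), and its holonomy around each of the three exceptional fibers is a rotation of order $\al$, so the transverse foliation extends across those fibers. (Alternatively, one may invoke Naimi's resolution of the Jankins--Neumann conjecture \cite{nai1994foliations}; see also \cite{Brittenham}.)

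By the first paragraph, no $M_\al$ admits a partially hyperbolic diffeomorphism, yet each carries a horizontal foliation. The $M_\al$ are pairwise non-homeomorphic: a Seifert fiber space over a hyperbolic base orbifold has a Seifert fibration unique up to isotopy, so its base orbifold is a homeomorphism invariant, and the orbifolds $S^2(\al,\al,\al)$, $\al$ odd, $\al\ge 5$, are pairwise distinct. This yields the desired infinite family. The only point requiring care is the construction of the transverse foliation on $M_\al$: it rests on the Eisenbud--Hirsch--Neumann dictionary and the standard fact that a peripheral holonomy equal to a rotation by $\bt/\al$ extends to a foliation across an exceptional fiber of type $(\al,\bt)$; everything else is the Euler number bookkeeping above, the equivalence in Theorem~\ref{thm:turnover}, and the uniqueness of Seifert fibrations over hyperbolic base orbifolds.
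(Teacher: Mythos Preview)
Your proof is correct and takes a genuinely different route from the paper's. The paper exhibits, for each odd $\al \ge 5$, the Seifert space with invariant $(0;(1,-1),(\al,1),(\al,2),(\al,1))$ over the turnover $S^2(\al,\al,\al)$; these have nonzero Euler number and satisfy the \emph{strict} Eisenbud--Hirsch--Neumann inequality $\sum \bt_i/\al_i = 4/\al < 1$, which is quoted from \cite{ehn1981transverse} as a sufficient condition for a horizontal foliation. To rule out a finite cover of $T^1\Sig$ the paper argues with the Seifert invariants directly: a degree-$d$ fiberwise cover would force $d\bt_i \equiv 1 \pmod{\al_i}$ for all $i$, and with $\bt_1=1$, $\bt_2=2$ this has no solution.

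You instead choose examples with Euler number zero, so the obstruction to covering $T^1\Sig$ becomes the single observation that $\mathrm{eu}(T^1\Sig)=\chi(\Sig)\ne 0$ and that Euler number is multiplicative under fiber-preserving covers (the base turnover has no nontrivial self-covers since $\chi<0$). This is cleaner on the dynamical side. The cost is on the foliation side: your examples sit exactly on the boundary of the EHN inequality ($\sum \bt_i/\al_i = 1$), so you cannot invoke that open sufficient condition and must build the horizontal foliation by hand via an abelian rotation representation. That construction is fine, but one must check that the flat bundle so obtained really carries the prescribed Seifert invariants; this is standard but convention-sensitive, and your fallback to \cite{nai1994foliations} is the honest shortcut. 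In summary, the paper trades a slightly more involved covering obstruction for an off-the-shelf existence criterion for the foliation, while you trade in the opposite direction.
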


\subsection{Organization of the paper} Section \ref{s.prelim} introduces some known facts that we will use in the course of the proof of our main results. In section \ref{s.novert} we state the result which establishes, under some assumptions (including being transitive, or dynamically coherent) that the dynamical (branching) foliations carried by partially hyperbolic diffeomorphisms must be (homotopically) transverse to the fibers of the circle bundle and work out some preliminaries for the proof.  This result is proved in sections \ref{sec-coherent} and \ref{sec-novertproof}. Then, in section \ref{s.proofCircle} we use this fact to give the obstructions in the Euler number of the bundle to admit such partially hyperbolic diffeomorphisms. Finally, in section \ref{s.Seifert} we extend the results to the general Seifert fiber case. 

\medskip 
{\bf Acknowledgements:} We thank Christian Bonatti for several remarks and ideas that helped in this project and in particular for his help in the proof of Lemma \ref{lem-twopoints}. We also thank Harry Baik, Jonathan Bowden, Joaqu\'in Brum, Katie Mann and Juliana Xavier for several helpful discussions.

\section{Preliminaries}\label{s.prelim}
\subsection{General facts on partially hyperbolic dynamics}\label{ss.PH}

Let $M$ be a 3-dimensional manifold and $f: M \to M$ a $C^1$-diffeomorphism. We say that $f$ is \emph{partially hyperbolic} if there exists a $Df$-invariant continuous splitting $TM=\Es \oplus \Ec \oplus \Eu$ into $1$-dimensional subbundles and $N>0$ such that for every $x\in M$: 
\[
    \|D_xf^N|_{\Es} \| < \min\{ 1, \|D_xf^N|_{\Ec} \|\} \leq \max \{ 1, \|D_xf^N|_{\Ec} \|\} < \|D_x f^N|_{\Eu}\| .
\]
It is \emph{absolutely partially hyperbolic} if moreover, there exist constants $0<\sigma<1<\mu$ such that:
\[
    \|D_xf^N|_{\Es} \| < \sigma <  \|D_xf^N|_{\Ec}\|  < \mu < \|D_x f^N |_{\Eu}\| .
\]
It is possible to change the Riemannian metric so that $N=1$ and we will do so. See \cite{Gourmelon}. 

One calls $\Es$ and $\Eu$ the \emph{strong stable} and \emph{strong unstable} bundles respectively. These integrate uniquely and give rise to foliations $\Fs$ and $\Fu$ called respectively the \emph{strong stable} and \emph{strong unstable} foliations (\cite{HPS}). The leaf of $\Fs$ or $\Fu$ through a point $x$ will be denoted by $\Ws(x)$ or $\Wu(x)$. The \emph{center bundle} $\Ec$ may not be integrable into a foliation (though being one-dimensional it always admits integral curves). 

When the bundles $\Es \oplus \Ec$ and $\Ec \oplus \Eu$ integrate into $f$-invariant foliations we say that $f$ is \emph{dynamically coherent}. In this case, $\Ec$ integrates into an $f$-invariant foliation obtained by intersection. We will sometimes consider a lift $\tilde f$ to $\tilde M$, the universal cover of $M$ and we shall use $\tilde X$ to denote the lift of $X$ whatever $X$ is.  For $Y$ contained in a metric space $Z$ we define $B_\eps(Y):=\{ z \in Z \ : \ d(z,Y) < \eps \}$. 

We refer the reader to \cite{HP-Survey} and references therein for a more detailed account. In particular, for a proof of the following fact from \cite{BI} that we will use repeatedly:

\begin{prop}\label{p-volvslength}
For every $\eps>0$, there exists $C>0$ such that if $I$ is an arc of $\tFu$ or $\tFs$ then $$\mathrm{volume}( B_\eps( I)) \geq C \cdot \mathrm{length}(I) .$$
\end{prop}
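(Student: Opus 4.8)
The statement to prove is Proposition \ref{p-volvslength}: for every $\eps>0$ there exists $C>0$ such that any arc $I$ of $\tFu$ or $\tFs$ satisfies $\mathrm{volume}(B_\eps(I)) \geq C \cdot \mathrm{length}(I)$.

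\medskip

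The plan is to exploit the local product structure of the strong foliations together with a bounded-overlap (Vitali-type) covering argument. First, I would fix $\eps>0$ and pass to a scale $\delta \in (0,\eps)$ small enough that, uniformly over $M$ (using compactness of $M$ and continuity of the bundles and the local leaves), every strong unstable or strong stable leaf restricted to a ball of radius $2\delta$ is a nearly-flat disc, and the exponential map at each point is a bi-Lipschitz chart on $B_{2\delta}$ with constants independent of the point. Lift everything to $\tilde M$: since the covering $\tilde M \to M$ is a local isometry, all these uniform local bounds persist on $\tilde M$. In particular there is a constant $c_0>0$ so that for any point $x$ and any arc $J$ of $\tFu$ (or $\tFs$) through $x$ with $\mathrm{length}(J) \le \delta$, the tube $B_\delta(J)$ has volume at least $c_0 \cdot \mathrm{length}(J)$; this is just the statement that a $\delta$-neighbourhood of a roughly straight segment of length $\ell \le \delta$ in a 3-manifold has volume comparable to $\delta^2 \ell$, and we absorb the $\delta^2$ into the constant.

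\medskip

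Next, given an arbitrary arc $I$, subdivide it along the leaf into consecutive subarcs $I_1,\dots,I_m$ each of length between $\delta/2$ and $\delta$ (so $m \le 2\,\mathrm{length}(I)/\delta + 1$, and also $m \ge \mathrm{length}(I)/\delta$), picking a midpoint $x_j \in I_j$. The balls $B_{\delta/4}(x_j)$ are pairwise disjoint when $|j-j'|$ is large, but more efficiently: choose a subcollection $\{x_{j_k}\}$ of these midpoints that are pairwise at distance $\ge \delta$ in $\tilde M$, and as large as possible; a standard greedy argument shows this subcollection has size at least $m / Q$ where $Q$ is a universal bound on how many of the original $\delta$-separated-in-arc-length points can fit in a ball of radius $2\delta$ in $\tilde M$ (finite and uniform, again by the local geometry bounds and the fact that $\tilde M$ has bounded geometry as a cover of a compact manifold). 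For the retained points, the balls $B_{\delta/2}(x_{j_k})$ are pairwise disjoint, each is contained in $B_\eps(I)$ (since $\delta < \eps$ and $x_{j_k} \in I$), and each contains the tube $B_{\delta/2}(I_{j_k} \cap B_{\delta/2}(x_{j_k}))$ around a leaf subarc of definite length $\ge \delta/4$, hence has volume $\ge c_0 \delta/4$ by the previous paragraph. Summing over the disjoint balls gives $\mathrm{volume}(B_\eps(I)) \ge (m/Q)\cdot(c_0\delta/4) \ge (c_0/(4Q)) \cdot \mathrm{length}(I)$, so $C = c_0/(4Q)$ works.

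\medskip

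The main obstacle, and the place that needs genuine care rather than bookkeeping, is establishing the uniform local bound $c_0$: one must know that an arc of the foliation of length $\le\delta$ genuinely "spreads out" transversally, i.e. that the leaves do not fold back so tightly within scale $\delta$ that their $\delta$-neighbourhood fails to have volume comparable to $\delta^2\ell$. This is where absolute continuity / the $C^1$ nature of the leaves and the transversality of $\Es$, $\Ec$, $\Eu$ (so that moving along $\Eu$ is uniformly transverse to a complementary $2$-plane field) is used, together with compactness of $M$ to make the constant uniform. Everything else — the subdivision, the greedy separation, the disjointness and the summation — is routine once $c_0$ and the packing constant $Q$ are in hand, both of which descend from $M$ to $\tilde M$ because the covering is a local isometry and $M$ is compact. (As noted in the excerpt, this is the estimate of \cite{BI}, so in the actual paper it is cited rather than reproved; the argument above is the shape of the proof being invoked.)
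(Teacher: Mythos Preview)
Your argument has a genuine gap at the packing constant $Q$. You claim that only boundedly many arc-length-$\delta$-separated points of $I$ can lie in a single ball of radius $2\delta$ in $\tilde M$, and you justify this by ``local geometry bounds and the fact that $\tilde M$ has bounded geometry as a cover of a compact manifold''. But bounded geometry gives no such control: a $C^1$ one-dimensional foliation on a compact (even simply connected) $3$-manifold can have a single leaf that returns to a small ball arbitrarily many times. For a concrete counterexample, take a nonsingular flow on $S^3$ whose orbits wind irrationally on an invariant torus; a long orbit segment then has $\eps$-neighbourhood of volume bounded by that of a tubular neighbourhood of the torus, so $\mathrm{volume}(B_\eps(I))/\mathrm{length}(I)\to 0$. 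Your covering/packing scheme would apply verbatim to this foliation and would ``prove'' the same inequality there, so the scheme cannot be correct as stated. The step that actually fails is precisely the finiteness of $Q$; your local tube bound $c_0$ is fine, but you have misidentified where the difficulty lies.

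The paper does not reprove the proposition; it cites \cite{BI} and explicitly remarks that the proof ``relies heavily on the construction of branching foliations'' and that Theorem~\ref{teoBI2} (the approximating true foliation) is ``one of the key points behind'' it. The mechanism you are missing is that the strong unstable foliation is uniformly transverse to a two-dimensional (branching) foliation --- the center-stable one --- whose lifted leaves separate $\tilde M$; an unstable arc therefore cannot cross back through the same $cs$-plaque, which is what bounds the number of returns to a small ball and makes $Q$ finite. Without invoking that transverse foliation (or an equivalent topological separation property specific to $\tFu$), the packing bound is simply unavailable.
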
 

The proof of this proposition relies heavily on the construction of branching foliations performed in \cite{BI} which we review next.

\subsection{Branching foliations}
In this subsection we introduce the concept of branching foliations and summarise the results from \cite{BI} that we will use. 

A \emph{branching foliation} $\cF$ on $M$ tangent to a distribution $E$ is a collection of immersed surfaces tangent to $E$ so that:

\begin{itemize}
\item every $L \in \cF$ is an orientable, boundaryless and complete (with the induced riemannian metric) surface, 
\item every $x\in M$ belongs to at least one $L \in \cF$,
\item no two surfaces of $\cF$ topologically cross each other, 
\item it is invariant under every diffeomorphism of $M$ whose derivative preserves $E$ and a transverse orientation,
\item if $x_n \to x$ and $L_n$ is a leaf of $\cF$ containing $x_n$ then $L_n \to L$ (uniformly in compact sets) and $x\in L \in \cF$. 
\end{itemize}

We will use the following theorem which combines \cite[Theorem 4.1]{BI} and \cite[Lemma 7.1]{BI} (see also \cite[Section 4]{HP-Survey}): 

\begin{teo}[Burago-Ivanov]\label{teoBI1} Let $f: M \to M$ be a partially hyperbolic diffeomorphism such that $\Es,\Ec,\Eu$ are orientable and their orientation are preserved by $Df$. Then, there exist branching foliations $\Fcs$ and $\Fcu$ tangent respectively to $\Es \oplus \Ec$ and $\Ec \oplus \Eu$. 
\end{teo}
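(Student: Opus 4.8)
\textbf{Proof proposal for Theorem \ref{teoBI1} (Burago--Ivanov).}

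\medskip

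The plan is to reduce to the statement as proved in \cite{BI} by tracking exactly what hypotheses their construction needs, and then verifying our hypotheses supply those. First I would recall that the main construction in \cite[\S4]{BI} starts from a partially hyperbolic $f$ for which the bundles $\Es\oplus\Ec$ and $\Ec\oplus\Eu$ admit a \emph{complete transversal family of approximating foliations}: one uses the absolute continuity/Lipschitz properties of the strong foliations $\Fs,\Fu$ together with a graph-transform argument to produce, for each small $\eps>0$, a genuine foliation $\cF^{cs}_\eps$ whose leaves are $C^1$ surfaces everywhere $\eps$-close in the Grassmannian to $\Es\oplus\Ec$, and similarly $\cF^{cu}_\eps$. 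The only structural input needed for this is that $\Es\oplus\Ec$ be a continuous plane field dominated by $\Eu$, which is automatic from partial hyperbolicity; orientability is \emph{not} needed at this stage, but it is needed to make the next step — passing to the limit $\eps\to 0$ — produce a well-defined collection of leaves rather than something only defined up to double cover.

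\medskip

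The second step is the limiting procedure of \cite[Lemma 7.1]{BI}. One fixes $x\in M$ and, for each $\eps_n\to 0$, takes the leaf $L^{cs}_{\eps_n}(x)$ of $\cF^{cs}_{\eps_n}$ through $x$; by the uniform local graph bounds these surfaces subconverge (uniformly on compact sets) to an immersed complete surface $L^{cs}(x)$ tangent to $\Es\oplus\Ec$. The collection $\Fcs$ of all such limit leaves, over all $x$ and all convergent subsequences, is then shown to satisfy the five bullet-point axioms of a branching foliation: completeness and tangency are clear from the construction; the non-crossing property follows because the approximating $\cF^{cs}_{\eps_n}$ are honest foliations (their leaves do not cross) and crossing is a closed condition under uniform convergence; the closure/continuity axiom (if $x_n\to x$, $L_n\ni x_n$ then $L_n\to L\ni x$) is essentially a diagonal argument using that each $L_n$ is itself a limit of approximating leaves; invariance under any diffeomorphism preserving $\Es\oplus\Ec$ and a transverse orientation follows because such a diffeomorphism carries $\cF^{cs}_\eps$ to an $\eps'$-approximating foliation with $\eps'\to0$, so it permutes the limit leaves. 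Here is where orientability of $\Es$, $\Ec$, $\Eu$ and $Df$-invariance of the orientations are used: to even speak of ``a transverse orientation'' of the branching foliation (so that $f$ itself is among the diffeomorphisms preserving it), and to guarantee the approximating leaves are \emph{orientable} surfaces, one needs a globally consistent coorientation of $\Es\oplus\Ec$, which is exactly the transverse orientation of $\Es$ inside $M$ — available precisely because $\Es,\Ec,\Eu$ are oriented and the orientations are $Df$-invariant. The $\Fcu$ case is identical after reversing time (replacing $f$ by $f\inv$), noting $f\inv$ is partially hyperbolic with $\Es,\Eu$ swapped and the orientation hypotheses preserved.

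\medskip

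The main obstacle I anticipate is not in the soft limiting argument but in \emph{citing rather than reproving} the hard analytic heart of \cite{BI}: the construction of the approximating foliations $\cF^{cs}_\eps$ themselves, which rests on delicate estimates (the ``surface through a point'' construction via iterating a graph transform along $\Fu$, controlled using Proposition \ref{p-volvslength}-type volume-versus-length bounds). Since the statement we want is literally the combination of \cite[Theorem 4.1]{BI} and \cite[Lemma 7.1]{BI} specialized to dimension $3$ under the stated orientability hypotheses, the honest ``proof'' is to verify that those two results apply verbatim; so the write-up will mostly consist of quoting \cite[Theorem 4.1, Lemma 7.1]{BI} (and \cite[Section 4]{HP-Survey} for an exposition) and checking the orientation bookkeeping in the previous paragraph, which is the only place our hypotheses differ cosmetically from theirs. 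I do not expect to need any new estimate beyond what is already in \cite{BI}.
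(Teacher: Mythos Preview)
Your bottom line --- that this theorem is not proved in the paper but simply quoted from \cite{BI} --- is exactly right, and matches what the paper does: it states the result as the combination of \cite[Theorem~4.1]{BI} and \cite[Lemma~7.1]{BI} and moves on.

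However, your narrative of \emph{how} the argument in \cite{BI} runs is inverted, and if you intend to include it as exposition you should fix it. In \cite{BI} the branching foliations are built \emph{first} (this is the content of their \S4 and Theorem~4.1): one constructs, through each point, complete surfaces tangent to $\Es\oplus\Ec$ directly, essentially by saturating local center--stable plaques along the strong stable foliation and using the domination to control the result; the orientability hypotheses enter here to get a consistent transverse (i.e.\ $\Eu$-) orientation so that the ``no topological crossing'' condition makes sense and so that leaves are orientable. The approximating true foliations $\cW_\eps$ of Theorem~\ref{teoBI2} (\cite[Theorem~7.2]{BI}) are produced \emph{afterwards}, by opening up the branching locus of the already-constructed $\Fcs$, not the other way around. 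So your ``first build $\cF^{cs}_\eps$ by graph transform, then let $\eps\to 0$'' account does not reflect \cite{BI}; in particular \cite[Lemma~7.1]{BI} is not a limiting lemma of the kind you describe. This does not affect the validity of citing the result, but the sketch as written would mislead a reader about where the analytic work lies.
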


\begin{remark}
The diffeomorphism is dynamically coherent if and only if $\Fcs$ and $\Fcu$ have no branching: each point belongs to a unique surface of the collection (see \cite{HP-Survey}).
\end{remark}

To apply foliation theory, we will need the following result (\cite[Theorem 7.2]{BI}) which is also one of the key points behind Proposition \ref{p-volvslength}. A symmetric statement holds for $\Fcu$.

\begin{teo}[Burago-Ivanov]\label{teoBI2} 
For every $\eps>0$ there exists a foliation $\cW_\eps$ tangent to a distribution $\eps$-close to $\Es \oplus \Ec$ and a continuous map $h_\eps: M \to M$ such that $d(h_\eps(x),x)< \eps$ for all $x\in M$ and such that when restricted to a leaf of $\cW_\eps$ the map $h_\eps$ is $C^1$ onto a leaf of $\Fcs$. 
\end{teo}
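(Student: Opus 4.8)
The plan is to obtain $\cW_\eps$ by resolving the branching of $\Fcs$ through an $\eps$-small perturbation, with $h_\eps$ the map collapsing the resolution back onto $\Fcs$. I would rely on three structural facts: by Theorem~\ref{teoBI1} no two leaves of $\Fcs$ topologically cross and every point of $M$ lies on a leaf; the leaves of $\Fcs$ are $C^1$, being tangent to a continuous distribution which integrates; and, since $TM=\Es\oplus\Ec\oplus\Eu$, the genuine strong unstable foliation $\Fu$ is everywhere transverse to $\Es\oplus\Ec$, so it supplies a globally defined transversal foliation for $\Fcs$.

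The first step is a local normal form for $\Fcs$. Cover $M$ by finitely many foliation boxes $U_i\cong D^2\times(-1,1)$ for $\Fu$, chosen small enough that the $\Fu$-plaques $\{z\}\times(-1,1)$ have length $<\eps$ and the $D^2$-slices are nearly tangent to $\Es\oplus\Ec$. Inside $U_i$ the sheets of $\Fcs$ are $C^1$ graphs $z\mapsto(z,g(z))$ over $D^2$; every point of $U_i$ lies on at least one such graph; and, reading the non-crossing hypothesis inside the box, the family of these graphs is totally ordered by the pointwise inequality $g\le g'$. Thus the only failure of $\Fcs|_{U_i}$ to be a genuine foliation is that several graphs may share a point, and, all of them lying in $U_i$, any two such graphs are $\eps$-close.

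Next I would resolve the branching at scale $\eps$. Along a transversal arc $\sigma_z=\{z\}\times(-1,1)$ the sheets meeting $\sigma_z$ are linearly ordered and the height map $g\mapsto g(z)$ is a monotone surjection onto $(-1,1)$ whose failure to be injective records the branching. One cannot in general open every branch point into its own interval of sheets --- the set of heights at which branching occurs along $\sigma_z$ may be uncountable, so the hoped-for leaf space would not be a manifold --- but one can perform an approximate opening-up: a genuine continuous monotone reparametrization of $\sigma_z$, varying continuously in $z$ and supported at the $\eps$-scale of the box, which separates sheets that are far apart and only leaves identified sheets that already agree up to scale $\eps$. This yields a genuine $C^1$ foliation of $U_i$ transverse to $\Fu$ and tangent to a distribution $\eps$-close to $\Es\oplus\Ec$, together with a continuous $\eps$-small map onto $U_i$ that restricts to a $C^1$ diffeomorphism from each new leaf onto a sheet of $\Fcs$. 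To globalize one runs this once and simultaneously along $\Fu$: fix a continuous transverse reference parametrization on a thin tube about each $\Fu$-leaf and open up with a single $\eps$-controlled ``leaf-counting'' function; the global foliation $\cW_\eps$ and map $h_\eps$ then satisfy $d(h_\eps(x),x)<\eps$ (after rescaling $\eps$), continuity, completeness and boundarylessness of leaves, and $C^1$-regularity along leaves, by the uniform $C^1$ bounds on sheets and compactness of $M$.

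The main obstacle is precisely this last, global step, which is the technical heart of \cite[\S7]{BI}: branching can be complicated enough that a full resolution is impossible, so one must settle for an approximate resolution at scale $\eps$ and then make the local pieces of it fit together into a single honest foliation --- all the more delicate because the transversal leaves of $\Fu$ are noncompact and carry no canonical global coordinate. By contrast the local non-crossing normal form, and the bookkeeping that checks $\eps$-closeness, $C^1$-regularity, and completeness, are comparatively routine.
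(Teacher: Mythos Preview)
The paper does not prove this statement: Theorem~\ref{teoBI2} is quoted from \cite[Theorem~7.2]{BI} and used as a black box, so there is no ``paper's own proof'' to compare against. Your write-up is therefore not a competing argument but a sketch of the Burago--Ivanov construction itself.

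As such a sketch it is broadly faithful. You correctly identify the ingredients (non-crossing of $\Fcs$, the transversal $\Fu$, local graph representation, monotone reparametrization of transversals) and you are candid that the substantive difficulty is the global step of assembling the local openings into a single honest foliation with a coherent collapsing map. That is indeed where the work in \cite[\S7]{BI} lies, and your outline does not actually carry it out: phrases like ``run this once and simultaneously along $\Fu$'' and ``a single $\eps$-controlled leaf-counting function'' hide exactly the issue, since unstable leaves are noncompact, there is no global section, and one must show that the local resolutions can be made to agree on overlaps without reintroducing branching or losing the $C^1$ leafwise regularity. So what you have is an accurate roadmap with the hard theorem still to be proved; for the purposes of this paper one should simply cite \cite{BI}, as the authors do.
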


Indeed one has (see \cite{HP-Survey} and references therein): %

\begin{fact}\label{fact-diffeo}
The lift $\tilde h_\eps$ of $h_\eps$ to $\tilde M$ when restricted to a leaf of $\tilde \cW_\eps$ is a diffeomorphism onto its image (a leaf of $\tilde \Fcs$). Moreover, these diffeomorphisms vary continuously as one changes the leaf. 
\end{fact}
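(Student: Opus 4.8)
The plan is to fix a leaf $\tilde L$ of $\tilde\cW_\eps$ and to verify three things in turn: that $\tilde h_\eps|_{\tilde L}$ is a $C^1$ local diffeomorphism onto a leaf of $\tilde\Fcs$, that it is injective, and that, as $\tilde L$ varies, the resulting diffeomorphisms and their inverses vary continuously. Write $L$ for the image of $\tilde L$ in $M$, a leaf of $\cW_\eps$; by Theorem~\ref{teoBI2}, together with the construction behind it in \cite{BI}, the restriction $h_\eps|_L$ is a $C^1$ local diffeomorphism onto a leaf $L'$ of $\Fcs$. Letting $\tilde L'$ be the lift of $L'$ that contains $\tilde h_\eps(\tilde L)$ and lifting this local diffeomorphism through the coverings $\tilde L\to L$ and $\tilde L'\to L'$, one obtains that $\tilde h_\eps|_{\tilde L}\colon\tilde L\to\tilde L'$ is itself a $C^1$ local diffeomorphism.

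The heart of the argument is the injectivity, and the input it relies on is that every leaf of $\tilde\cW_\eps$ and of $\tilde\Fcs$ is a properly embedded plane in $\tilde M$. For the branching foliation $\tilde\Fcs$ this is part of the Burago--Ivanov theory in \cite{BI}; for the genuine foliation $\tilde\cW_\eps$ the same circle of ideas applies, using that for $\eps$ small $\cW_\eps$ is transverse to $\Fu$ and hence has no Reeb components, and that $M$ is aspherical. Granting this, $\tilde h_\eps|_{\tilde L}$ is a proper map: for $K\subof\tilde L'$ compact, the set $(\tilde h_\eps|_{\tilde L})\inv(K)$ is closed in $\tilde L$ and, since $\dist(\tilde h_\eps(z),z)<\eps$ for all $z\in\tilde M$, it is contained in the intersection of $\tilde L$ with a bounded (hence relatively compact, as $\tilde M$ is complete) neighbourhood of $K$ in $\tilde M$, which is compact because $\tilde L$ is properly embedded. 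A proper local diffeomorphism onto a connected manifold has image that is nonempty, open and closed, so it is a surjective covering map; since $\tilde L'$ is simply connected this covering is trivial, and $\tilde h_\eps|_{\tilde L}\colon\tilde L\to\tilde L'$ is a diffeomorphism onto the leaf $\tilde L'$ of $\tilde\Fcs$. I expect the planarity and proper embedding of the leaves to be the only genuinely substantial step here: it is where one uses that $M$ is aspherical and that the foliations are Reebless.

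For the continuity, suppose $\tilde L_n\to\tilde L$ uniformly on compact sets, with image leaves $\tilde L'_n,\tilde L'$ of $\tilde\Fcs$. Continuity of $\tilde h_\eps$ on $\tilde M$ immediately yields $\tilde h_\eps|_{\tilde L_n}\to\tilde h_\eps|_{\tilde L}$ uniformly on compact sets. For the inverses, fix $\tilde x\in\tilde L$ and choose $\tilde x_n\in\tilde L_n$ with $\tilde x_n\to\tilde x$; then $\tilde h_\eps(\tilde x_n)\in\tilde L'_n$ converges to $\tilde h_\eps(\tilde x)\in\tilde L'$, so by the last axiom in the definition of a branching foliation $\tilde L'_n\to\tilde L'$ uniformly on compact sets. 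Because the properness above is uniform in the leaf (the constant is always $\eps$), this gives $(\tilde h_\eps|_{\tilde L_n})\inv\to(\tilde h_\eps|_{\tilde L})\inv$ uniformly on compact sets, which is the asserted continuity.
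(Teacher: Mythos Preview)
Your argument is correct and fills in details that the paper itself does not supply: in the paper this fact is stated with a bare reference to \cite{HP-Survey} and the construction in \cite{BI}, with no proof given. Your decomposition---local diffeomorphism on leaves from the Burago--Ivanov construction, injectivity via properness together with simple connectedness of leaves in $\tilde M$, then continuous dependence---is exactly how one unpacks those references, and each step is sound. The properness argument in particular is clean and is the right way to promote the leafwise $C^1$ map to a global diffeomorphism once one knows the leaves of both $\tilde\cW_\eps$ and $\tilde\Fcs$ are properly embedded planes.

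One small point in the continuity paragraph: invoking the last axiom of a branching foliation to conclude $\tilde L'_n\to\tilde L'$ is slightly indirect, since that axiom only guarantees convergence to \emph{some} leaf through the limit point, which in a genuinely branching foliation need not be the particular leaf $\tilde L'$. It is cleaner to observe directly that $\tilde L'_n=\tilde h_\eps(\tilde L_n)$ and $\tilde L'=\tilde h_\eps(\tilde L)$, so convergence $\tilde L'_n\to\tilde L'$ follows from $\tilde L_n\to\tilde L$ and the continuity of $\tilde h_\eps$ on $\tilde M$; this also makes the subsequent compactness argument for the inverses go through without ambiguity about which limiting leaf one has landed on.
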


\begin{fact}\label{fact-reebless}
The foliation $\cW_\eps$ is Reebless\footnote{See also \cite[Section 5]{HP-Survey} for more details on how this follows from Novikov's theorem.} for all small $\eps$ (\cite{CandelConlon}). In particular, when lifted to the universal cover, the space of leaves $\mathcal{L}_\eps$ of the foliation $\tilde \cW_\eps$ is a 1-dimensional, possibly non-Hausdorff simply connected manifold. Using the maps $h_\eps$ and its lift $\tilde h_\eps$ to the universal cover one deduces that the space of leaves of $\tilde \Fcs$ is also a $1$-dimensional, possibly non-Hausdorff simply connected manifold.  
\end{fact}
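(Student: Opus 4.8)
\textbf{Proof proposal for Fact \ref{fact-reebless}.}

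The plan is to first establish the Reeblessness of $\cW_\eps$ by combining two ingredients: the control on the distribution tangent to $\cW_\eps$ provided by Theorem \ref{teoBI2} (it is $\eps$-close to $\Es\oplus\Ec$) and the standard fact that foliations tangent to a distribution transverse to a one-dimensional contracting direction cannot contain a Reeb component. Concretely, I would argue by contradiction: if $\cW_\eps$ contained a Reeb component for arbitrarily small $\eps$, then it would contain a torus leaf $T$ bounding a solid torus, and the strong unstable foliation $\Fu$ (whose tangent bundle $\Eu$ makes a definite angle with $\Es\oplus\Ec$, hence with $T\cW_\eps$ when $\eps$ is small) would be transverse to $T$. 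A transverse unstable arc entering the Reeb component can never leave it, so it has infinite length inside a compact region; applying Proposition \ref{p-volvslength} (the Burago--Ivanov volume-vs-length estimate) to longer and longer subarcs forces $\mathrm{volume}(B_\eps(I))$ to exceed $\mathrm{volume}(M)$, a contradiction. Alternatively, and perhaps more cleanly, one invokes Novikov's theorem directly: a Reeb component produces a nullhomotopic closed transversal or a torus leaf bounding a solid torus, and in the universal cover this is incompatible with the fact that $\Fu$ lifts to a foliation by properly embedded lines that intersect each lift of a $\cW_\eps$-leaf in at most one point (this last property itself follows from no-topological-crossing of the branching foliation together with Theorem \ref{teoBI2}). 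This is the step I expect to require the most care, since one must check that the transversality between $\Eu$ and $T\cW_\eps$ genuinely holds for all sufficiently small $\eps$ uniformly, and that the relevant closed transversal or torus leaf is indeed inessential.

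Once $\cW_\eps$ is known to be Reebless, the description of the leaf space follows from classical foliation theory. Lifting $\cW_\eps$ to a foliation $\tilde\cW_\eps$ of the universal cover $\tilde M$, Palmeira's theorem (for Reebless foliations of open simply connected $3$-manifolds, or more directly the general structure theory of codimension-one foliations without Reeb components) gives that $\tilde\cW_\eps$ has no compact leaves and every leaf is properly embedded and separates $\tilde M$; consequently the leaf space $\cL_\eps := \tilde M/\tilde\cW_\eps$ is a simply connected (possibly non-Hausdorff) one-dimensional manifold. Here I would cite \cite{CandelConlon} for the codimension-one foliation facts and note, as the footnote indicates, that \cite[Section 5]{HP-Survey} spells out the passage through Novikov's theorem in exactly this partially hyperbolic context.

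Finally, to transfer this conclusion to $\tilde\Fcs$, I would use the map $\tilde h_\eps : \tilde M \to \tilde M$ from Theorem \ref{teoBI2} and Fact \ref{fact-diffeo}. Since $\tilde h_\eps$ is surjective, moves points a bounded amount, and restricts to a diffeomorphism from each leaf of $\tilde\cW_\eps$ onto a leaf of $\tilde\Fcs$, it descends to a continuous surjection $\bar h_\eps : \cL_\eps \to \cL(\tilde\Fcs)$ on leaf spaces. The no-topological-crossing property of the branching foliation $\Fcs$ ensures this map is well-behaved: two distinct leaves of $\tilde\Fcs$ are images of leaves of $\tilde\cW_\eps$ that do not cross, and the order structure is respected, so $\bar h_\eps$ is in fact a homeomorphism onto its image once one quotients $\cL_\eps$ by the equivalence relation "same image leaf," which by the uniform closeness $d(h_\eps(x),x)<\eps$ collapses only relations forced by branching. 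Hence the leaf space of $\tilde\Fcs$ inherits the structure of a one-dimensional, possibly non-Hausdorff, simply connected manifold. The main subtlety in this last paragraph is verifying that the identification $\bar h_\eps$ really is injective (equivalently, that two $\tilde\cW_\eps$-leaves mapping to the same $\tilde\Fcs$-leaf are "the same" in $\cL_\eps$ in the relevant sense); this again traces back to the no-crossing axiom and the completeness of leaves, and is handled in \cite{HP-Survey}.
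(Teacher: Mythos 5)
The paper does not prove this Fact: it is imported from \cite{CandelConlon} and \cite[Section 5]{HP-Survey} (ultimately \cite{BBI,BI}), so your proposal has to be measured against that cited argument. Your last two paragraphs are fine as sketches and follow the standard route: Reeblessness plus Novikov/Palmeira gives that the leaves of $\tilde \cW_\eps$ are properly embedded separating planes, hence the leaf space is a simply connected, possibly non-Hausdorff $1$-manifold, and the monotone surjection on leaf spaces induced by $\tilde h_\eps$ (whose fibers are connected by the non-crossing property) transports this to $\tilde \Fcs$.

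The genuine gap is in your primary argument for Reeblessness. Proposition \ref{p-volvslength} is a statement about arcs of $\tFu$ in the \emph{universal cover}; the analogous inequality is false in $M$ itself (a dense unstable leaf has infinite length while every $\eps$-neighbourhood has volume at most $\volume(M)$), so ``infinite length inside a compact region'' yields nothing until you lift. After lifting, the trapped unstable ray lies in a lift $\tilde R$ of the Reeb solid torus $R$, and $\tilde R$ is compact only when the core of $R$ is null-homotopic in $M$. When the core is essential, $\tilde R$ is an infinite solid cylinder, $B_\eps(\tilde R)$ has infinite volume with linear growth, and the estimate of Proposition \ref{p-volvslength} is perfectly consistent with a ray drifting linearly along the core direction: no contradiction results. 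This essential-core case is exactly where the cited proofs must work harder (via the vanishing cycle: the meridian of $\partial R$ is essential in the torus leaf but null-homotopic in $M$). Your alternative route is also circular as stated: that lifted unstable leaves meet each leaf of $\tilde \cW_\eps$ at most once is normally \emph{deduced} from the leaves being separating planes, i.e.\ from Reeblessness. Two further remarks. First, in the setting this paper actually needs there is a shortcut you missed: a Reeb component has a torus boundary leaf, $h_\eps$ carries it to a compact leaf of $\Fcs$ tangent to $\Es \oplus \Ec$ (Fact \ref{fact-diffeo}), which must be a torus, and Theorem \ref{teoHHU} then forces $M$ to fiber over $S^1$ with torus fibers --- impossible for a circle bundle over a higher-genus surface; this disposes of Reeb components with no volume argument at all. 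Second, your worry about uniform transversality is not an issue: $\Eu$ makes a definite angle with $\Es \oplus \Ec$ by compactness, so it is transverse to any distribution $\eps$-close to $\Es \oplus \Ec$ once $\eps$ is below that angle.
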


\subsection{Torus leaves}
In this section we review a result of \cite{HHU-tori} showing that in our context the branching foliations $\Fcs$ and $\Fcu$ cannot have torus leaves. Notice that there exists an example in $\mathbb{T}^3$ where such leaves do exist \cite{HHU-example}. The following is a simplified version of the main result from \cite{HHU-tori} which is enough for our purposes. 

\begin{teo}[Rodriguez Hertz-Rodriguez Hertz-Ures]\label{teoHHU} 
If a partially hyperbolic diffeomorphism $f:M \to M$ has a torus $T$ tangent to $\Es \oplus \Ec$, then, $M$ fibers over $S^1$ with torus fibers. 
\end{teo}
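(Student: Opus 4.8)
The plan is to run everything through the strong unstable foliation $\Fu$, which is transverse to $T$ since $T$ is everywhere tangent to $\Es\oplus\Ec$ while $\Eu$ is transverse to this plane field, and to feed this transversality into the regularized foliations of Burago--Ivanov (Theorem~\ref{teoBI2}) together with the volume/length estimate of Proposition~\ref{p-volvslength}. As a preliminary reduction I would pass to a finite cover and replace $f$ by a power so that $\Es,\Ec,\Eu$ are orientable with $Df$-invariant orientations (so that Theorem~\ref{teoBI1} applies, $M$ is orientable, and $\Fu$ is transversely orientable along $T$), noting that $T$ lifts to finitely many tori tangent to $\Es\oplus\Ec$. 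One then has to check separately the purely topological point that if such a cover fibers over $S^1$ with torus fibers, so does $M$; this I would settle using the geometrization of the relevant (virtually solvable/virtually abelian) geometries.

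\textbf{Step 1 ($T$ is a saturated, incompressible torus).} A compact surface tangent to $\Es\oplus\Ec$ cannot topologically cross a leaf of the branching foliation $\Fcs$, and an innermost-arc argument together with completeness of the leaves shows that the leaves of $\Fcs$ meeting $T$ are contained in $T$; hence $T$ is a union of leaves of $\Fcs$. Applying Theorem~\ref{teoBI2}, for $\eps$ small the approximating foliation $\cW_\eps$ has a compact leaf $L_\eps$ which $h_\eps$ carries diffeomorphically onto $T$, so $L_\eps$ is an embedded torus; since $\cW_\eps$ is Reebless (Fact~\ref{fact-reebless}), Novikov's theorem forces every leaf, in particular $L_\eps$ and hence $T$, to be incompressible. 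Thus $\mathbb{Z}^2\hookrightarrow\pi_1(M)$, and in the universal cover each lift $\tilde T$ is a properly embedded plane separating $\tM$ into two pieces, with the coorientation of $\Eu$ pointing from one to the other.

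\textbf{Step 2 (product structure of the complement).} Because an unstable leaf crosses every lift of $T$ consistently with the coorientation, in $\tM$ each leaf of $\tFu$ meets each lift $\tilde T$ at most once. Cutting $M$ along $T$ and examining the induced foliation by curves of $\Fu$, transverse to the boundary tori, one is reduced to showing that every such curve is a compact arc running from one boundary torus to the other; the holonomy along $\Fu$ then gives a homeomorphism between the boundary tori, identifies the complementary piece with $T^2\times[0,1]$, and exhibits $M$ as a mapping torus of a homeomorphism of $T^2$, i.e.\ as a manifold fibering over $S^1$ with torus fibers. Ruling out unstable leaves that remain in a lift of the complementary region with infinite length, and ruling out ``spiralling'' of $\Fu$ onto $T$, is the crux: one must combine the domination in the partially hyperbolic splitting and the quasi-isometric behaviour of unstable arcs in $\tM$ recorded in Proposition~\ref{p-volvslength} with the structure of the (branching) center-stable foliation near the torus leaf. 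I expect this last structural step to be the main obstacle — it is essentially the whole content of \cite{HHU-tori} — while the reductions, the incompressibility of $T$, and the passage from a product complement to a torus bundle are routine given the machinery already recalled above.
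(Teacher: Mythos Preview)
The paper does not prove this statement: Theorem~\ref{teoHHU} is quoted as a black box from \cite{HHU-tori} (``a simplified version of the main result from \cite{HHU-tori}'') and is used only as input to rule out torus leaves in the branching foliations. There is therefore no ``paper's own proof'' to compare your proposal against.

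As for the proposal itself, it is an outline rather than a proof, and you say so: the decisive part of Step~2 --- ruling out unstable half-leaves that stay forever in a complementary region and excluding spiralling of $\Fu$ onto $T$ --- is deferred to \cite{HHU-tori}, which is precisely the reference the theorem is being cited from. So nothing has actually been established beyond what the paper already imports. Step~1 also has soft spots: it is not automatic that the particular branching foliation $\Fcs$ produced by Theorem~\ref{teoBI1} has $T$ as one of its leaves (tangency to $\Es\oplus\Ec$ and non-crossing do not by themselves force membership in the Burago--Ivanov collection), and the direction of the map $h_\eps$ in Theorem~\ref{teoBI2} is from leaves of $\cW_\eps$ onto leaves of $\Fcs$, so the existence of a torus leaf $L_\eps$ in $\cW_\eps$ with $h_\eps(L_\eps)=T$ requires an argument you have not supplied. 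Finally, the ``purely topological'' reduction you invoke at the start --- that a finite cover fibering over $S^1$ with torus fibers forces $M$ to do so --- is true but is itself a nontrivial statement about 3-manifold geometries, not something to wave at.
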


Notice that this implies that if $M$ does not fiber over $S^1$ with torus fibers then neither the branching foliation $\Fcs$ from Theorem \ref{teoBI1} nor the approximating foliations $\cW_\eps$ given by Theorem \ref{teoBI2} can have torus leaves.

\subsection{Circle bundles}\label{ss.circlebundles}
The paper will first deal with the circle bundle case for which there are some simplifications in the notations and for which some results are easier to state. Later in section \ref{s.Seifert} we shall remove this unnecessary hypothesis and work in general Seifert fiber spaces. 

Recall that a circle bundle is a manifold $M$ admitting a smooth map $p: M \to \Sigma$ (in our case, $\Sigma$ is a higher genus surface) such that the fiber $p^{-1}(\{x\})$ for every $x\in \Sigma$ is a circle and there is a local trivialization: for every $x\in \Sigma$ there is an open set $U$ such that $p^{-1}(U)$ is diffeomorphic to $U \times S^1$ via a diffeomorphism preserving the fibers.   

Notice that the fundamental group of the fibers fits into an exact sequence which is a central extension of $\pi_1(\Sigma)$ by $\mathbb{Z}$: 

$$ 0 \rightarrow \pi_1(S^1) \cong \mathbb{Z} \rightarrow  \pi_1(M) \rightarrow \pi_1(\Sigma) \rightarrow 0 $$

Recalling that, if the genus of $\Sigma$ is $g\geq 2$, then the fundamental group $\pi_1(\Sigma)$ admits a presentation:

$$ \pi_1(\Sigma) = \bigg\langle a_1,b_1, \ldots, a_g, b_g  \  | \ \prod_{i=1}^g [a_i,b_i] = \mathrm{id} \bigg\rangle,  $$

\noindent one obtains that $\pi_1(M)$ admits a presentation:

$$ \pi_1(M) = \bigg\langle a_1,b_1, \ldots, a_g, b_g, c \  | \  \prod_{i=1}^g [a_i,b_i] = c^{\mathrm{eu}(M)} \ , \ [a_i,c]= \mathrm{id} \ , \ [b_i,c]=\mathrm{id} \bigg\rangle,  $$

\noindent where $\mathrm{eu}(M)$ is the \emph{Euler number} of the circle bundle. See \cite[Book II, Chapter 4]{CandelConlon} for more details on the Euler number and the fundamental group of $M$. Let us just end by noticing that the \emph{center} of $\pi_1(M)$ is the (cyclic) group generated by the fundamental group of the fiber, called $c$ in the above presentation. As the center is a group invariant and in this case is isomorphic to $\mathbb{Z}$ one knows that every automorphism of $\pi_1(M)$ will send $c$ to either $c$ or $c^{-1}$. 

Finally, let us point out that circle bundles over surfaces of genus $\geq 2$ do not fiber over $S^1$ with torus fibers so that  Theorem \ref{teoHHU} will imply that a partially hyperbolic diffeomorphism on such an $M$ cannot have a torus tangent to $\Es \oplus \Ec$ or $\Ec \oplus \Eu$. 

\subsection{Taut foliations on circle bundles} 

Let $M$ be a circle bundle and let $\cF$ be a foliation on $M$ which has no torus leaves. We remark that we use the convention here that a foliation is a $C^{0,1+}$-foliation in the sense that it has only continuous trivialization charts, but leaves are $C^1$ and tangent to a continuous distribution (see \cite{CandelConlon}). 

Foliations without torus leaves in 3-manifolds are examples of what is known as \emph{taut foliations}. We will state a result of Brittenham \cite{Brittenham} as it appears in \cite[\S4.10]{Calegari} (where a different proof in the case of circle bundles can be found). We remark that Brittenham result is an extension of an old result of Thurston that was also improved by Levitt (see \cite{Levitt-novert}). The proof of \cite{Levitt-novert} uses the hypothesis that $\cF$ is $C^2$ but most of the proof (except the part where it is shown that there are no vertical leaves) can be applied in the $C^{0,1+}$ case as they only use results of putting tori in general position which are now available for $C^0$-foliations \cite{Solodov} (see also the proof of \cite[Proposition 2.3]{Ghys}).  

We use the following definitions.
A leaf $L$ of $\cF$ is \emph{vertical} if it contains every fiber it intersects. Equivalently, there exists a properly immersed curve $\gamma$ in $\Sigma$ such that $L= p^{-1}(\gamma)$. 
A leaf $L$ of $\cF$ is \emph{horizontal} if it is transverse to the fibers (this includes the possibility of not intersecting some of them). 

\begin{teo}[Brittenham-Thurston]\label{teo-brit}
Let $\cF$ be a foliation without torus leaves in a Seifert fiber space $M$. Then, there is an isotopy $\psi_t: M \to M$ from the identity such that the foliation $\psi_1(\cF)$ verifies that every leaf is either everywhere transverse to the fibers (horizontal) or saturated by fibers (vertical).   
\end{teo}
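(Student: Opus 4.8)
\textbf{Proof proposal for Theorem \ref{teo-brit}.}

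The plan is to reduce the statement to the classical results of Thurston (for circle bundles over surfaces) and Brittenham (in the general Seifert case), while being careful about the regularity class $C^{0,1+}$ and the fact that the foliation is assumed only to have no torus leaves rather than being taut by hypothesis. First I would recall that a foliation without torus leaves in an irreducible $3$-manifold is taut: by Novikov's theorem (available for $C^{0,1+}$-foliations, see \cite{Solodov, CandelConlon}) such a foliation has no Reeb components, and since $M$ is a Seifert fiber space over a hyperbolic orbifold it is irreducible with infinite fundamental group, so a Reebless foliation is taut. This is the input needed to invoke the Brittenham--Thurston machinery.

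Next I would invoke Brittenham's theorem on taut (or even just Reebless) foliations in Seifert fiber spaces: after an isotopy, every leaf is either horizontal (transverse to all fibers it meets) or vertical (saturated by fibers), i.e.\ the foliation is put into \emph{standard position} with respect to the Seifert fibering. The relevant reference is \cite{Brittenham}, with the exposition in \cite[\S4.10]{Calegari}; in the circle-bundle case one can alternatively use the older argument of Thurston, improved by Levitt \cite{Levitt-novert}. The one subtlety is that Levitt's argument is written for $C^2$-foliations: here I would follow the remark already made in the paragraph preceding the statement, namely that the only place the $C^2$ hypothesis is genuinely used in Levitt's proof is in the step ruling out vertical leaves, and that the rest of the argument---which is what produces the horizontal-or-vertical dichotomy after isotopy---only needs general-position results for surfaces, and these are now available in the $C^0$ setting \cite{Solodov} (cf.\ also the proof of \cite[Proposition 2.3]{Ghys}). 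So the dichotomy statement, but not the stronger ``no vertical leaves'' conclusion, survives in class $C^{0,1+}$.

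Concretely, the steps in order are: (1) observe $M$ is irreducible with infinite $\pi_1$; (2) no torus leaves $\Rightarrow$ no Reeb components (Novikov, $C^0$ version) $\Rightarrow$ $\cF$ is taut; (3) apply Brittenham's theorem (alternatively Thurston--Levitt in the bundle case, using the $C^0$ general-position results to avoid the $C^2$ hypothesis) to isotope $\cF$ to standard position, so that each leaf of $\psi_1(\cF)$ is horizontal or vertical; (4) read off the statement. The main obstacle---and the reason this is stated as a lemma to be cited carefully rather than proved in full---is the regularity bookkeeping in step (3): one must make sure that putting the transverse tori and the leaves of $\cF$ into general position, and performing the subsequent isotopies, can all be carried out for merely $C^{0,1+}$-foliations. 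This is exactly what \cite{Solodov} supplies, and it is the only place where the argument departs from the textbook treatment; everything else is a direct citation.
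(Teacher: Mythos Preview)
Your proposal is correct and matches the paper's treatment: the paper does not give an independent proof of this theorem but rather states it as a citation to Brittenham \cite{Brittenham} (with the circle-bundle exposition in \cite[\S4.10]{Calegari}), together with exactly the regularity remark you make---that Levitt's $C^2$ argument goes through in class $C^{0,1+}$ once one uses the $C^0$ general-position results of \cite{Solodov}. Your step (2) makes explicit the ``no torus leaves $\Rightarrow$ taut'' implication that the paper leaves implicit, but otherwise the content and the references invoked are the same.
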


Of course, it is possible to work inversely and apply the isotopy to $p$ in order to have that $\cF$ has the property that every leaf is vertical or horizontal with respect to $p\circ \psi_1$. 

\begin{remark}
After Theorem \ref{teo-brit} it makes sense, for a taut foliation $\cF$, to call a leaf vertical if it has a loop freely homotopic to a fiber, and horizontal if it has no such loop. We will adopt this point of view in what follows. 
\end{remark}

\section{Vertical leaves}\label{s.novert}

Let $M$ be an orientable circle bundle over an orientable surface $\Sigma$ of genus $g\geq 2$. Let $p: M \to \Sigma$ denote the projection of this circle bundle. 

We consider $f: M \to M$ to be a partially hyperbolic diffeomorphism such that the bundles $\Es,\Ec,\Eu$ are orientable and its orientation is preserved by $Df$. 

Let $\Fcs,\Fcu$ be fixed branching foliations associated to $f$. 

We will now say that a leaf $L$ of $\Fcs$ or $\Fcu$ is \emph{vertical} if its fundamental group contains an element of the center of $\pi_1(M)$ (i.e., it contains a loop freely homotopic in $M$ to a fiber of the circle bundle). See the remark after Theorem \ref{teo-brit}. 

We state the following result which will be proved in the next sections. 

\begin{thm}\label{teo-novert} 
For $f$ and $M$ as above, under any of the following assumptions, the branching foliations $\Fcs$ and $\Fcu$ have no vertical leaves:
\begin{enumerate}
\item \label{it-transitivity} $f$ is \emph{chain recurrent}\footnote{This includes as particular cases when $f$ is transitive or volume preserving.}, i.e., if $U$ is a non-empty open set such that $f(\overline U)\subset U$ then $U=M$;
\item \label{it-coherence} $f$ is \emph{dynamically coherent}, i.e., the branching foliations have no branching;
\item \label{it-isotopicid} $f$ is \emph{homotopic to the identity};
\item \label{it-isotopicpA} the action of $f$ on $\pi_1(\Sigma)$ is \emph{pseudo-Anosov};
\item \label{it-absolute} $f$ is \emph{absolutely partially hyperbolic}.  
\end{enumerate}
\end{thm}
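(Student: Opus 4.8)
The strategy is to argue by contradiction: suppose that, say, $\Fcs$ has a vertical leaf $L$, so that $\pi_1(L)$ contains a power $c^m$ of the fiber class $c$ (the center of $\pi_1(M)$). We want to derive a contradiction with each of the five hypotheses. First I would set up the key geometric dichotomy. Using Fact~\ref{fact-reebless}, for small $\eps$ the approximating foliation $\cW_\eps$ is Reebless and its leaf space $\cL_\eps$ is a simply connected (possibly non-Hausdorff) $1$-manifold; by Theorem~\ref{teoHHU} (together with the remark that circle bundles over higher-genus surfaces do not fiber over $S^1$ with torus fibers) $\cW_\eps$ has no torus leaves, so it is taut. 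By Theorem~\ref{teo-brit} we may isotope so that each leaf of $\cW_\eps$ is either horizontal (transverse to fibers) or vertical (saturated by fibers, i.e.\ of the form $p^{-1}(\gamma)$ for a closed or properly immersed curve $\gamma$ in $\Sigma$). The map $h_\eps$ of Theorem~\ref{teoBI2} moves points less than $\eps$ and sends $\cW_\eps$-leaves onto $\Fcs$-leaves preserving the homotopy type of loops, so a leaf of $\Fcs$ is vertical (in the $\pi_1$-sense defined before Theorem~\ref{teo-novert}) precisely when the corresponding $\cW_\eps$-leaf is vertical in the topological sense. Hence "$\Fcs$ has a vertical leaf" is equivalent to "$\cW_\eps$ has a vertical leaf", and the vertical leaves of $\cW_\eps$ project to a (possibly empty) finite family of disjoint simple closed essential curves in $\Sigma$ — this uses that distinct leaves of $\cW_\eps$ don't cross and that $\Sigma$ has only finitely many isotopy classes of disjoint essential simple closed curves; call this finite curve system $\mathcal{C} \subset \Sigma$, which is $f$-invariant up to isotopy because $\Fcs$ is $f$-invariant and the isotopy to tautness can be made canonical modulo the mapping class.

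\textbf{Exploiting invariance of the curve system.} Once $\mathcal{C}$ is a finite, essential, $f$-invariant-up-to-isotopy multicurve in $\Sigma$, the induced action $f_*$ of $f$ on $\pi_1(\Sigma)$ (or on the mapping class group level, on $\Sigma$) permutes the components of $\mathcal{C}$; a power $f^N$ fixes each component up to isotopy. This is where the five hypotheses get used. For case~\eqref{it-isotopicpA}: if $f$ acts as a pseudo-Anosov on $\pi_1(\Sigma)$, it cannot fix any essential simple closed curve up to isotopy, so $\mathcal{C} = \emptyset$ and there are no vertical leaves; done. For case~\eqref{it-isotopicid}: if $f$ is homotopic to the identity then $f_*$ is trivial on $\pi_1(\Sigma)$, so every component of $\mathcal{C}$ is individually preserved up to isotopy. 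For cases~\eqref{it-transitivity} (chain recurrence), \eqref{it-coherence}, \eqref{it-absolute}: these do not a priori restrict $f_*$, so here we need a genuinely dynamical argument rather than just a topological one.

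\textbf{The dynamical core.} The heart of the matter — and the step I expect to be the main obstacle — is to rule out an $f^N$-invariant vertical leaf $L$ of $\Fcs$ (equivalently of $\cW_\eps$), whose projection is a fixed essential simple closed curve $\gamma$, under the dynamical hypotheses. The idea: consider the vertical torus or Klein-bottle-like region $p^{-1}(\gamma)$ — actually $p^{-1}(\gamma)$ is a torus or Klein bottle, and in the orientable case a torus $T$; this $T$ is (after isotopy) a union of leaves of $\cW_\eps$, hence essentially $f$-invariant. But $T$ being a vertical torus, the strong foliations $\Fs,\Fu$ meet $T$ in one-dimensional foliations, and one studies the dynamics of $f^N$ restricted near $T$. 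Here I would invoke Proposition~\ref{p-volvslength} (volume-versus-length for strong arcs): on the vertical leaf $L$, the center direction is the fiber direction $c$, which is uniformly bounded (fibers have bounded length), while the strong stable or strong unstable direction inside a $\Fcs$ or $\Fcu$ leaf would have to wind within the compact torus $T$; a standard argument then shows that $\Wu$ (resp.\ $\Ws$) arcs inside or crossing $T$ grow linearly in length while staying in a bounded-volume region $B_\eps(T)$, contradicting Proposition~\ref{p-volvslength}, unless the center/fiber direction is itself expanded or contracted — but the fiber class $c$ is central and fixed by $f_*$, so iterates of a fiber stay at bounded distance from themselves and cannot be uniformly expanded. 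This yields that $E^c$ must be "neutral" along $T$, forcing $T$ to be a torus tangent to $\Es\oplus\Ec$ or $\Ec\oplus\Eu$, which contradicts Theorem~\ref{teoHHU} since $M$ does not fiber over $S^1$ with torus fibers. The subtle point — why this needs chain recurrence or dynamical coherence or absoluteness — is that without such a hypothesis the leaf $L$ might branch, or $f$ might permute infinitely many nearby non-closed vertical leaves, so one must first use the hypothesis to reduce to a single well-behaved periodic compact invariant object (a torus), and this reduction is precisely the technical content deferred to Sections~\ref{sec-coherent} and~\ref{sec-novertproof}.
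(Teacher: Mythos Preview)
Your proposal rests on a structural mistake about vertical leaves: you assume that the vertical leaves of $\cW_\eps$ project to a \emph{finite multicurve} $\mathcal{C}\subset\Sigma$ of simple closed curves, and then treat each $p^{-1}(\gamma)$ as a torus. But a vertical leaf $L=p^{-1}(\gamma)$ with $\gamma$ a simple closed curve would be a torus tangent to $E^{cs}$, and that is already excluded by Theorem~\ref{teoHHU}. Hence if vertical leaves exist at all, they are necessarily non-compact cylinders $\mathbb{R}\times S^1$, and their projections form a genuine \emph{lamination} of $\Sigma$ with no closed leaves --- not a finite curve system. This single error undermines each of your case analyses. Your argument for~\eqref{it-isotopicpA} (``a pseudo-Anosov fixes no essential simple closed curve'') is vacuously true but irrelevant: what actually happens is that the projected lamination is forced to be \emph{full}, so its complementary regions have cyclic $\pi_1$, and one then needs a volume-versus-length argument (Proposition~\ref{p-volvslengthsmall}) to reach a contradiction. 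Your ``dynamical core'' for the remaining cases is circular: you start by assuming you have an invariant vertical torus $T$ and end by saying such a $T$ contradicts Theorem~\ref{teoHHU} --- but the torus was your own unjustified assumption, not something the hypotheses produce.

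The paper's actual proofs are substantially different in each case and do not pass through a finite multicurve or a torus. For~\eqref{it-transitivity} one thickens the vertical $cs$-lamination $\Lambda^{cs}$ along short unstable arcs to get an open set $U_\eps$ with $f^{-1}(\overline{U_\eps})\subset U_\eps$, contradicting chain recurrence directly. For~\eqref{it-isotopicid} one uses that vertical leaves lift to quasi-isometrically embedded, polynomial-area surfaces in $\tilde M$ (Lemma~\ref{l.verticallamination}), so backward iterates of a stable arc have polynomially growing neighborhoods but exponentially growing length, contradicting Proposition~\ref{p-volvslength}. For~\eqref{it-coherence} the argument is the most delicate: one first finds a periodic boundary leaf $L$ of a minimal vertical sublamination, then an attracting annulus $X\subset L$ (Lemma~\ref{lema-anulus}), then a fixed center circle inside $L$ via a graph-transform argument (Proposition~\ref{prop-circleexist}) --- this is where coherence is genuinely used --- and finally derives a contradiction from holonomy along unstable arcs. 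For~\eqref{it-absolute} one combines the ``no common vertical $cs$/$cu$ leaves'' statement (Proposition~\ref{p.novertifcircle}) with length comparisons exploiting the gap between the absolute constants $\sigma<\mu$. None of these mechanisms appear in your plan.
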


In item (\ref{it-isotopicpA}) we are taking into account that $f_\ast : \pi_1(M) \to \pi_1(M)$ preserves the generator corresponding to the fibers, and therefore induces an action on $\pi_1(\Sigma)$ via the fiber bundle projection $p$.  
The general case, where $f$ may not be dynamically coherent and the action on $\pi_1(\Sigma)$ is reducible is still open, though we think that the proofs we provide shed some light on how to attack it.  Notice that recently non-dynamically coherent examples have appeared in Seifert manifolds (\cite{BGHP}) but these have horizontal branching foliations. %

\medskip

\begin{remark} To prove item (\ref{it-coherence}) we give a general statement about vertical $cs$ and $cu$-laminations which works even if the diffeomorphism is not dynamically coherent (see Proposition \ref{p.novertifcircle}).  Similarly, to prove point (\ref{it-isotopicpA}) we show a slightly more general statement that may be useful in certain situations (see Proposition \ref{p-volvslengthsmall}).
\end{remark}

Using Theorems \ref{teo-brit} and \ref{teoBI2} we get the following result which is what we will use to obtain our main theorem:

\begin{cor}\label{cor-novert} Under any of the conditions (\ref{it-transitivity})-(\ref{it-absolute}) it follows that for every small $\eps>0$ there exists a projection $p_\eps$ isotopic to $p$ such that for $\cW_\eps$, the approximating foliation to $\Fcs$, one has that every leaf is transverse to the fibers of $p_\eps$. 
\end{cor}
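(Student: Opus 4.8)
The plan is to combine Theorem \ref{teo-novert}, Theorem \ref{teoBI2}, Theorem \ref{teoHHU}, and Theorem \ref{teo-brit} in the obvious sequence, the only subtle point being to track how the isotopy interacts with the fiber structure. First, by Theorem \ref{teo-novert}, under any of the hypotheses (\ref{it-transitivity})--(\ref{it-absolute}) the branching foliation $\Fcs$ has no vertical leaves, meaning no leaf of $\Fcs$ carries a loop freely homotopic to a fiber. Second, recall from subsection \ref{ss.circlebundles} that $M$, being a circle bundle over a surface of genus $\geq 2$, does not fiber over $S^1$ with torus fibers, so by Theorem \ref{teoHHU} the foliation $\Fcs$ has no torus leaves; hence for all small $\eps>0$ the approximating foliation $\cW_\eps$ from Theorem \ref{teoBI2} has no torus leaves either (as noted after Theorem \ref{teoHHU}), and in particular $\cW_\eps$ is taut, so Theorem \ref{teo-brit} applies to it.

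Next I would transfer the ``no vertical leaves'' property from $\Fcs$ to $\cW_\eps$. By Theorem \ref{teoBI2} there is a continuous $\eps$-close map $h_\eps : M \to M$ carrying each leaf of $\cW_\eps$ onto a leaf of $\Fcs$; since $h_\eps$ is $\eps$-close to the identity it induces the identity on $\pi_1(M)$ (for $\eps$ small), and restricted to a leaf $L$ of $\cW_\eps$ it is $\pi_1$-surjective onto the corresponding leaf of $\Fcs$ by Fact \ref{fact-diffeo} (lifting to the universal cover, $\tilde h_\eps$ maps $\tilde L$ diffeomorphically onto a leaf of $\tFcs$). Therefore if some leaf $L$ of $\cW_\eps$ had a loop freely homotopic in $M$ to a fiber, its image would be a loop on a leaf of $\Fcs$ freely homotopic to a fiber, contradicting the absence of vertical leaves of $\Fcs$. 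So $\cW_\eps$ has no vertical leaves in the sense of the remark following Theorem \ref{teo-brit}.

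Finally, apply Theorem \ref{teo-brit} to $\cW_\eps$: there is an isotopy $\psi_t$ from the identity so that every leaf of $\psi_1(\cW_\eps)$ is either horizontal (transverse to every fiber) or vertical (saturated by fibers) with respect to $p$. As noted after Theorem \ref{teo-brit}, one may equivalently leave $\cW_\eps$ fixed and replace $p$ by $p_\eps := p \circ \psi_1^{-1}$, which is isotopic to $p$. Since $\cW_\eps$ has no vertical leaves and the vertical/horizontal dichotomy of Theorem \ref{teo-brit} is detected precisely by whether a leaf carries a loop freely homotopic to a fiber (the remark after Theorem \ref{teo-brit}), no leaf of $\cW_\eps$ is vertical with respect to $p_\eps$; hence every leaf of $\cW_\eps$ is transverse to the fibers of $p_\eps$, which is the assertion. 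The main point requiring care — rather than a deep obstacle — is the bookkeeping in the second paragraph: that $h_\eps$ genuinely transports the free-homotopy class of a fiber loop on a leaf of $\cW_\eps$ to one on a leaf of $\Fcs$, which is why one needs Fact \ref{fact-diffeo} and the fact that an $\eps$-small perturbation acts trivially on $\pi_1(M)$.
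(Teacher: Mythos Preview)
Your argument is correct and follows exactly the route the paper intends: the paper states this corollary as an immediate consequence of Theorems \ref{teo-brit} and \ref{teoBI2} (together with Theorem \ref{teo-novert}), without spelling out details, and you have supplied precisely those details. The transfer of the no-vertical-leaves property from $\Fcs$ to $\cW_\eps$ via $h_\eps$ is the same observation the paper uses (in the other direction) in the proof of Lemma \ref{l.verticallamination}.

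One small bookkeeping slip: the new projection should be $p_\eps = p \circ \psi_1$, not $p \circ \psi_1^{-1}$. If $\psi_1(L)$ is transverse to the fibers of $p$, then $L$ is transverse to the fibers of $p \circ \psi_1$ (whose fibers are $\psi_1^{-1}$ of the original fibers); this is exactly the convention the paper records after Theorem \ref{teo-brit}.
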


The proofs are symmetric on $cs$ and $cu$, so we concentrate on the $cs$-case. Item (\ref{it-coherence}) is the most involved and will be proved separately in section \ref{sec-coherent}. The rest of the items will be proved in section \ref{sec-novertproof} and their proofs are independent of section \ref{sec-coherent} with the exception of item (\ref{it-absolute}). 

\subsection{Generalities} 
Define $\Lambda^{cs} \subset M$ to be the union of all vertical $cs$-leaves. 

\begin{prop}\label{prop-Lambdacompactinv} 
The set $\Lambda^{cs}$ is compact, $f$-invariant and $\Lambda^{cs}\neq M$. 
\end{prop}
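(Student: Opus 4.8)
The plan is to establish the three assertions—compactness, $f$-invariance, and properness—more or less independently, relying on the closing/limit axiom of branching foliations and on Theorem~\ref{teoHHU}.

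\textbf{$f$-invariance.} This is the most immediate point. Since $f$ preserves the orientations of $\Es,\Ec,\Eu$, the branching foliation $\Fcs$ is $f$-invariant, i.e. $f$ permutes its leaves; if $L$ is a leaf then $f(L)$ is a leaf. Moreover $f_\ast:\pi_1(M)\to\pi_1(M)$ carries the center to itself (as noted in \S\ref{ss.circlebundles}, $f_\ast(c)=c^{\pm1}$), so a leaf $L$ whose fundamental group meets the center $\langle c\rangle$ non-trivially is carried by $f$ to a leaf $f(L)$ with the same property, using that $f_\ast$ sends $\pi_1(L)\subset\pi_1(M)$ onto $\pi_1(f(L))$ up to conjugacy. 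Hence $f(\Lam^{cs})=\Lam^{cs}$. The same argument applies to $f\inv$, giving genuine invariance.

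\textbf{Compactness.} It suffices to show $\Lam^{cs}$ is closed, since $M$ is compact. Take $x_n\in\Lam^{cs}$ with $x_n\to x$; by definition each $x_n$ lies on a vertical leaf $L_n\in\Fcs$. By the last axiom of branching foliations, after passing to a subsequence $L_n\to L$ uniformly on compact sets with $x\in L\in\Fcs$. The point is to check that $L$ is again vertical. Here I would argue at the level of the universal cover: each $L_n$ contains a loop $\gamma_n$ freely homotopic to a fiber, and, after sliding $\gamma_n$ along $L_n$ to pass near a basepoint, one may assume the $\gamma_n$ have uniformly bounded length (the fiber has a definite length, and leaves of $\Fcs$, being tangent to a continuous distribution uniformly transverse nowhere to the fibers in general—careful, that is not yet known—so instead use the algebraic fact that $c$ has bounded word length and the leafwise metric is comparable). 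More robustly: lift to $\tilde M$; verticality of $L$ is equivalent to the stabilizer of a lift $\tilde L$ in $\pi_1(M)$ containing a power of $c$. Using that $\tilde L_n\to\tilde L$ uniformly on compact sets and that the deck transformation $c$ moves points a bounded amount, one shows $c\cdot\tilde L_n$ stays within bounded Hausdorff distance of $\tilde L_n$, hence in the limit $c$ (or a bounded power) preserves $\tilde L$, so $L$ is vertical. This closedness argument is where the real work lies.

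\textbf{Properness, $\Lam^{cs}\neq M$.} Suppose for contradiction $\Lam^{cs}=M$, so every point lies on some vertical $cs$-leaf. I would aim to extract a torus leaf and invoke Theorem~\ref{teoHHU} together with the fact (noted at the end of \S\ref{ss.circlebundles}) that circle bundles over higher-genus surfaces do not fiber over $S^1$ with torus fibers, giving the contradiction. The mechanism: a vertical leaf $L$ contains a loop freely homotopic to a fiber; by Theorem~\ref{teo-brit} applied to an approximating foliation $\cW_\eps$ (Theorem~\ref{teoBI2}), or rather by the structure theory of taut foliations on Seifert spaces, a vertical leaf is a cylinder or torus saturated by fibers, i.e. of the form $p\inv(\gamma)$ for a curve $\gamma$ in $\Sig$; if such leaves fill $M$ one gets that $\Fcs$ restricted to a fibered neighbourhood looks like a foliation of $\Sig$ by curves, forcing a closed curve $\gamma$ and hence a torus leaf $p\inv(\gamma)$ tangent to $\Es\oplus\Ec$. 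Then Theorem~\ref{teoHHU} is contradicted. The main obstacle is making this last step precise without yet knowing tautness/structure of $\Fcs$ itself—one must work with the genuine foliation $\cW_\eps$ and transport conclusions back via $h_\eps$, controlling that verticality is preserved under the $\eps$-perturbation; I expect the cleanest route is to first prove $\Lam^{cs}$ is a closed $f$-invariant proper-or-not set, and only then, in the non-properness case, push everything through $\cW_\eps$ to produce the forbidden torus.
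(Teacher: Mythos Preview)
Your arguments for $f$-invariance and closedness are essentially the paper's: invariance follows because $f_\ast$ fixes the center of $\pi_1(M)$, and closedness is established in the universal cover by observing that if each lifted leaf $\tilde L_n$ is $c$-invariant then so is the limit $\tilde L$. (Your phrasing of the closedness step is a bit roundabout---once you know $c\cdot\tilde L_n=\tilde L_n$ exactly, the ``bounded Hausdorff distance'' and ``bounded power'' clauses are unnecessary; $c$-invariance passes to limits directly by continuity.)

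The properness step, however, has a genuine gap, and it is precisely the one the paper flags in a footnote. Your strategy is to pass to the approximating true foliation $\cW_\eps$, apply Brittenham--Thurston, project vertical leaves to a $1$-dimensional foliation of $\Sigma$, and extract a closed leaf (hence a torus) from $\chi(\Sigma)<0$. This works cleanly in the dynamically coherent case, where $\Fcs$ is already a genuine foliation. In the branching case the blow-up producing $\cW_\eps$ can create \emph{new} leaves that do not correspond to any single leaf of $\Fcs$, and there is no reason these must be vertical even if every leaf of $\Fcs$ is. So from $\Lam^{cs}=M$ you cannot conclude that the vertical sublamination of $\cW_\eps$ is all of $\cW_\eps$; you only get a (possibly proper) vertical lamination in $\cW_\eps$, which projects to a lamination of $\Sigma$ that need not have any closed leaf. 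Your proposed fix---``controlling that verticality is preserved under the $\eps$-perturbation''---is exactly the step that does not go through.

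The paper's route avoids this entirely by working with the leaf space $\mathcal{L}$ of $\tFcs$, a simply connected (possibly non-Hausdorff) $1$-manifold on which $\pi_1(M)$ acts. If $\Lam^{cs}=M$ then $c$ fixes every point of $\mathcal{L}$; since there is no $cs$-torus (Theorem~\ref{teoHHU}) and $c$ is central, every element outside $\langle c\rangle$ acts freely on $\mathcal{L}$. A theorem of Sacksteder then forces $\mathcal{L}\cong\bbR$ with the action semiconjugate to translations; but choosing $g_1,g_2\in\pi_1(M)$ with $[g_1,g_2]\notin\langle c\rangle$ gives an element that is simultaneously semiconjugate to the identity and fixed-point free on $\mathcal{L}$, a contradiction. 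This argument uses Theorem~\ref{teoHHU} only to rule out compact leaves (hence fixed points for non-central elements), not to manufacture a torus from a hypothetical surface foliation.
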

\begin{proof} The action of $f$ on the fundamental group must preserve the center of $\pi_1(M)$ and therefore, as leaves of $\Fcs$ are sent to leaves of $\Fcs$, those which contain a loop homotopic to a fiber are invariant.  

To show that $\Lambda^{cs}$ is closed, we work in the universal cover, and notice that if a sequence $x_n \to x$ and $L_n$ are leaves through $x_n$ which are invariant under the deck transformation generated by $c$ (a generator of the center of $\pi_1(M)$) then, the same holds for a leaf $L$ which is the limit (in the sense of uniform convergence in compact sets). Therefore, $L$ also belongs to the lift of $\Lambda^{cs}$ and contains $x$. %

To show that $\Lambda^{cs} \neq M$ we will make an \emph{ad hoc} direct argument\footnote{Notice that if $f$ is dynamically coherent, this follows directly by applying Theorem \ref{teo-brit}. Indeed, as $\Lambda^{cs}$ would be a vertical foliation one would be able to project the leaves to get a one dimensional foliation in the surface, contradicting that it has genus $\geq 2$. When there is branching, one cannot apply this reasoning directly because the foliation obtained by blowing up in Theorem \ref{teoBI2} could create some non-vertical leaves even if every leaf of the branching foliation is vertical.}. An alternative proof can be obtained by using the rest of the results in this section. 

We follow an argument borrowed from \cite{Barbot2}. We work in the universal cover, where the properties of the branching foliation imply that the space of leaves $\mathcal{L}$ is a $1$-dimensional (possibly non-Hausdorff) simply connected manifold  (as explained in Fact \ref{fact-reebless}) where $\pi_1(M)$ acts. %

If one assumes by contradiction that $\Lambda^{cs}=M$, one obtains that every leaf is fixed by  a generator $c$ of the center of $\pi_1(M)$. As there is no torus tangent to $E^s \oplus E^c$ and $c$ commutes with every other element of $\pi_1(M)$, it follows that every element not in the center of $\pi_1(M)$ acts freely on $\mathcal{L}$. This implies, by a Theorem of Sacksteder (see \cite[Theorem 3.3]{Barbot2}) that $\mathcal{L}$ is a line and the action is semiconjugate to an action by translations. 

Moreover, if one chooses two elements $g_1,g_2$ in $\pi_1(M)$ such that neither $g_1$, $g_2$ nor its commutator belong to the center of $\pi_1(M)$, one obtains a contradiction since the commutator should be semiconjugate to identity, but it cannot have fixed points in $\mathcal{L}$.  %
\end{proof}

The reader only interested in item (\ref{it-transitivity}) of Theorem \ref{teo-novert} can safely skip to subsection \ref{ss.trans} for a direct proof. 

To continue we will introduce the concept of \emph{quasi-isometrically embedded submanifold.} Given $b>0$ we say that a submanifold $L$ of a manifold $M$ is $b$-\emph{quasi-isometric} if for every $x,y \in L$ one has that:

$$ d_L (x,y) \leq b d_M (x,y) + b,  $$

\noindent where $d_L$ denotes the metric induced by $M$ on $L$ and $d_M$ is the metric in $M$. 

\begin{prop}\label{prop-quasigeodesic}
   Let $\Gamma < \mathrm{PSL}(2,\mathbb{R})$ be the set of deck transformations for a given compact surface $\Sigma$ and consider $\mathcal{A}=\{\eta_i\}_{i \in I}$ a family of properly embedded copies of  $\ \mathbb{R}$ with the following properties: 
   \begin{itemize}
   \item for every $i \in I$ and $g \in \Gamma$ one has that $g \eta_i \in \mathcal{A}$;
   \item for every $i,j \in I$ (not necessarily different) and $g \in \Gamma$, the curves $g \eta_i$ and $\eta_j$ do not have topologically transverse intersections; 
   \item if $\eta_{i_k} \in \mathcal{A}$ is a sequence such that $\eta_{i_k} \to \eta$ uniformly in compact sets, then $\eta \in \mathcal{A}$. 
   \end{itemize}
   Then, there exists $b>0$ such that every curve in $\mathcal{A}$ is $b$-quasi-isometric. 
 \end{prop}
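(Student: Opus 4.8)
The plan is to argue by contradiction using a compactness argument on the space of curves, exploiting the fact that the ambient space is the hyperbolic plane $\mathbb{H}^2 = \mathrm{PSL}(2,\mathbb{R})/\mathrm{PSO}(2)$, on which $\Gamma$ acts cocompactly. Suppose no uniform $b$ works. Then there is a sequence of curves $\eta_{i_k} \in \mathcal{A}$ and points $x_k, y_k \in \eta_{i_k}$ witnessing $d_{\eta_{i_k}}(x_k,y_k) > k \, d_{\mathbb{H}^2}(x_k,y_k) + k$. In particular the intrinsic distance along the curve between $x_k$ and $y_k$ blows up while the extrinsic distance may stay bounded or at least grows much more slowly. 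The first step is to normalize: using the cocompactness of the $\Gamma$-action, pick $g_k \in \Gamma$ moving (say) $x_k$ into a fixed compact fundamental domain $K \subset \mathbb{H}^2$. Replace $\eta_{i_k}$ by $g_k \eta_{i_k}$, which still lies in $\mathcal{A}$ by the first hypothesis, and relabel so that $x_k \in K$ for all $k$.

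Next I would pass to a limit. Since the curves are tangent to a continuous distribution (in the application they are leaves of branching foliations intersected with the relevant surface, so they have uniformly bounded geometry — bounded curvature), an Arzelà–Ascoli argument gives a subsequence along which $\eta_{i_k}$ converges uniformly on compact sets to a properly embedded curve $\eta_\infty$, and by the third (closedness) hypothesis $\eta_\infty \in \mathcal{A}$. The key dynamical input is the second hypothesis: no two curves in the $\Gamma$-orbit closure of $\mathcal{A}$ cross topologically. This non-crossing, combined with $\Gamma$-invariance, is exactly the hypothesis of a Sacksteder/Haefliger-type or Novikov-type rigidity statement — a properly embedded line in $\mathbb{H}^2$ whose $\Gamma$-translates never cross it behaves, coarsely, like a geodesic: it cannot ``spiral'' or accumulate on itself, because any such wild behavior would, after translating by suitable group elements (which must exist since $\Gamma$ is cocompact and nonelementary), force a transverse intersection with a translate. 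More concretely, I would show that a non-self-crossing, $\Gamma$-quasi-invariant family of proper lines in $\mathbb{H}^2$ must have each line separate $\mathbb{H}^2$ into two pieces each containing a ``large'' part of the boundary circle; then a line that fails the quasi-isometric inequality would have to make a long excursion returning close to its starting point, and a translate of the curve (or of another curve in $\mathcal{A}$) placed across that near-return would be forced to cross it topologically, contradicting hypothesis two. Letting $k \to \infty$, the limit curve $\eta_\infty$ would have to contain an entire bi-infinite arc of infinite intrinsic length trapped in a bounded region of $\mathbb{H}^2$, i.e. it would fail to be properly embedded, or it would self-accumulate in a way incompatible with the non-crossing condition among its own $\Gamma$-translates — the contradiction.

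The main obstacle, and the step I would spend the most care on, is making the non-crossing-implies-coarsely-geodesic mechanism precise: turning ``a long near-return forces a transverse intersection with a group translate'' into a rigorous statement with explicit constants, or alternatively packaging it as an application of an existing theorem. I expect the cleanest route is to invoke the structure already used in the proof of Proposition~\ref{prop-Lambdacompactinv}: the leaf space is a simply connected (possibly non-Hausdorff) $1$-manifold on which $\Gamma$ acts, and Sacksteder's theorem (cited there as \cite[Theorem 3.3]{Barbot2}) controls such actions. One would combine that with the classical fact that leaves of a Reebless foliation in a hyperbolic $3$-manifold, or their traces on a $\pi_1$-injective surface, are quasi-isometrically embedded — this is essentially Candel's uniformization or the quasigeodesic behavior established by Fenley/Calegari for taut foliations. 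So realistically my proof would reduce to: (i) reduce to the leaf space / boundary-at-infinity picture via the non-crossing hypothesis, (ii) cite Sacksteder-type control of the $\Gamma$-action plus the Morse lemma in $\mathbb{H}^2$ to conclude each curve is within bounded Hausdorff distance of a geodesic, and (iii) observe that a proper line within bounded Hausdorff distance of a geodesic in $\mathbb{H}^2$ is automatically quasi-isometrically embedded, with a constant depending only on that Hausdorff distance, which by the compactness argument is uniform over $\mathcal{A}$.
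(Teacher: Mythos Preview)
Your overall shape---establish that each individual curve is coarsely a geodesic, then upgrade to a uniform constant via compactness of the family---matches the paper. The difference lies in how the individual step is carried out, and your direct contradiction argument has a genuine gap.

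You argue: take witnesses $x_k,y_k$ to the failure of uniformity, normalize, pass to a limit curve $\eta_\infty$, and conclude that $\eta_\infty$ ``would fail to be properly embedded'' or would self-accumulate. But the third hypothesis guarantees $\eta_\infty\in\mathcal{A}$, so it \emph{is} properly embedded, and nothing prevents the long arc between $x_k$ and $y_k$ from simply escaping to infinity in the parameter as $k\to\infty$. Concretely, if the arcs are parametrized by arc length, the parameter interval carrying the bad arc has length $>k$ and runs off to infinity; the limit curve sees only a single proper passage through $x_\infty$ and no contradiction arises. The contradiction you want requires already knowing that $\eta_\infty$ is quasi-geodesic with some constant, so that nearby curves inherit a comparable constant---but that is precisely what remains to be proved.

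The paper handles this with an elementary lemma (the appendix's Lemma~\ref{lem-rayinfinity}): for any $\eta\in\mathcal{A}$, the limits $\lim_{t\to\pm\infty}\eta(t)$ exist in $\partial\mathbb{H}^2$ and are distinct. The proof is a direct argument with hyperbolic elements of $\Gamma$: if the forward limit set were a nontrivial arc $[a,b]$, choose $g\in\Gamma$ with $g[a,b]\subset(a,b)$ and derive a forced transverse crossing; if it were all of $\partial\mathbb{H}^2$, a Jordan-curve argument produces a fixed point of a hyperbolic $g$ in $\mathbb{H}^2$; if the two endpoints coincided, an infinite-area complementary region and suitable translates again force a crossing. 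This is exactly the ``near-return forces a crossing with a translate'' mechanism you sketch informally, made precise. Once each curve has two distinct endpoints at infinity, the paper invokes the compactness of the family together with \cite[Lemma~10.20]{Calegari} to obtain the uniform constant $b$.

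Your suggested detour through Sacksteder's theorem on the leaf space, or through Fenley/Calegari taut-foliation technology, is heavier than needed and not the route the paper takes; the key step is the boundary-at-infinity lemma, proved by hand from the non-crossing hypothesis and cocompactness.
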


The proof of this proposition is purely geometric and is relegated to an appendix. Similar results exist in the literature, see e.g. \cite{Levitt} or \cite{Calegari}.

We use this to provide some basic facts about vertical $cs$-laminations. We will say that $\Lambda \subset \Lambda^{cs}$ is a \emph{sublamination} if it is closed, $f$-invariant and saturated by $cs$-leaves. We shall call a leaf $L$ an \emph{accessible boundary leaf} of $\Lam$  if it contains an endpoint of a segment whose interior is contained in $M\setminus \Lambda$. For a sublamination $\Lambda \subset \Lambda^{cs}$, let $\tilde \Lambda$ denote the set of lifts of leaves in $\Lambda$ to $\tilde M$. 

\begin{lemma}\label{l.verticallamination} 
Let $\Lambda \subset \Lambda^{cs}$ be a  non-empty sublamination. Then: 
\begin{enumerate}
\item\label{it-quasiisomleaves} there exists $b>0$ such that the leaves of $\tilde \Lambda$ are $b$-quasi-isometric in $\tilde M$;
\item\label{it-polynomialarea} the leaves of $\tilde \Lambda$ are parabolic, i.e. they have polynomial growth of area of balls with respect to the radius;
\item\label{it-periodic} there exists a connected component $U$ of $M \setminus \Lam$ which is periodic and the leaves of $\Lam$ in the (accessible) boundary of $U$ are periodic;
\item\label{it-unst} every connected component $U$ of $M\setminus \Lam$ which is periodic contains an unstable arc whose forward iterates remain far from $\Lambda$.
\end{enumerate}
\end{lemma}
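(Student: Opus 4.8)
\textbf{Proof proposal for Lemma \ref{l.verticallamination}.}

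The plan is to treat the four items mostly in sequence, using the earlier structural results. For item (\ref{it-quasiisomleaves}), I would project the situation to the base surface. Each leaf $L$ of $\Lambda$ is vertical, so it contains a loop freely homotopic to a fiber; combined with the fact that there are no tori tangent to $\Es\oplus\Ec$ (Theorem \ref{teoHHU} plus the remark after it), $L$ must be (up to isotopy) of the form $p\inv(\gamma)$ for a properly immersed curve $\gamma$ in $\Sig$ — more precisely, in the universal cover $\tM$, which fibers over $\tilde\Sig = \mathbb{H}^2$, each lifted leaf $\tilde L$ projects to a properly embedded line $\eta_{\tilde L}$ in $\mathbb{H}^2$. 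The collection $\cA = \{\eta_{\tilde L}\}$ is $\Gamma$-invariant, closed under limits (by the last defining property of branching foliations), and the leaves do not topologically cross (again a branching-foliation property), so Proposition \ref{prop-quasigeodesic} applies and gives a uniform $b_0$ such that every $\eta_{\tilde L}$ is $b_0$-quasi-isometric in $\mathbb{H}^2$. Since $\tM \to \mathbb{H}^2$ is a fiber bundle with compact fibers and the deck action of the center $c$ is by bounded translation along the fiber direction, a quasi-isometrically embedded line in the base pulls back to a quasi-isometrically embedded plane in $\tM$; this is where one does the (routine) bookkeeping to produce the constant $b$.

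For item (\ref{it-polynomialarea}), once $\tilde L$ is $b$-quasi-isometric in $\tM$, the area of a ball of radius $r$ in $\tilde L$ is comparable to the volume of a tube of bounded radius around a ball of radius $\sim r$ in $\tM$, and $\tM$ has polynomial (in fact cubic) volume growth because $\pi_1(M)$ has polynomial growth is \emph{false} here — so instead I would argue via the projection: $\eta_{\tilde L}$ quasi-isometric in $\mathbb{H}^2$ means $\eta_{\tilde L}$ has at most linear growth (a quasigeodesic line in $\mathbb{H}^2$ has linearly-growing length inside balls), and the leaf $\tilde L$ is, coarsely, $\eta_{\tilde L}\times(\text{circle of bounded size})$, hence has at most quadratic area growth; in particular it is parabolic. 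Item (\ref{it-periodic}) is a compactness/recurrence argument: $M\sans\Lam$ has finitely many connected components (the complement of a lamination in a compact manifold with the no-crossing property), $f$ permutes them, so some component $U$ is periodic, say $f^n(U)=U$; and the accessible boundary leaves of a periodic complementary component are permuted by $f^n$ among themselves, and again there are finitely many of them up to the relevant identifications, so a further power fixes (the isotopy class of, hence by invariance of the branching foliation, actually) a boundary leaf — giving periodic boundary leaves. One must be slightly careful that ``finitely many accessible boundary leaves'' uses that each complementary region is bounded by finitely many leaves, which follows from local finiteness of the lamination charts on the compact manifold $M$.

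The main obstacle I expect is item (\ref{it-unst}): producing an unstable arc inside a periodic complementary component $U$ whose forward iterates stay uniformly away from $\Lambda$. The idea is that if $f^n(U)=U$ then $f$ restricted to (the closure of) $U$ is a partially hyperbolic-like map on a region whose boundary leaves are $cs$; since unstable leaves are expanded and must eventually exit any small neighborhood of the $cs$-lamination (a $cs$-leaf and a $u$-curve meet quasi-transversally, so a long unstable arc cannot stay $\eps$-close to a $cs$-leaf — this is exactly the mechanism behind Proposition \ref{p-volvslength} and Theorem \ref{teoBI2}), one picks a point $x\in U$ far from $\Lambda$ and looks at the connected component of $\Wu(x)\cap U$ through $x$; if this unstable plaque had all forward iterates returning close to $\Lambda$, the volume-versus-length estimate (Proposition \ref{p-volvslength}) applied to the growing unstable arcs $f^k(I)$, which stay in the bounded-volume set $B_\eps(\Lambda)$, would force $\mathrm{length}(f^k(I))$ to be bounded, contradicting uniform expansion along $\Eu$. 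Formalizing ``remains far from $\Lambda$'' — i.e.\ choosing the arc and the constant so the argument is not circular — is the delicate point, and I would handle it by a maximality/compactness argument on the set of unstable arcs in $U$ together with the fact, from item (\ref{it-periodic}), that the boundary leaves of $U$ are themselves periodic so their $\eps$-neighborhoods are $f^n$-almost-invariant.
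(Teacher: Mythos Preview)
Your outline has the right shape for items (\ref{it-quasiisomleaves}) and (\ref{it-polynomialarea}), but it skips a step the paper actually needs: a ``vertical'' leaf of the \emph{branching} foliation $\Fcs$ is only known to contain a loop freely homotopic to a fiber, not to be a union of fibers, so its lift need not project to a line in $\mathbb{H}^2$. The paper handles this by passing to the approximating true foliation $\cW_\eps$ (Theorem~\ref{teoBI2}), applying Brittenham--Thurston (Theorem~\ref{teo-brit}) to make its vertical sublamination genuinely fiber-saturated for some isotoped fibration $p_\eps$, and then using the leafwise diffeomorphism $\tilde h_\eps$ (Fact~\ref{fact-diffeo}) to transfer the quasi-isometric and area-growth estimates back to $\tFcs$. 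More seriously, your argument for item (\ref{it-periodic}) rests on the assertion that $M\setminus\Lambda$ has finitely many connected components; this is not a general property of codimension-one laminations in compact manifolds and ``local finiteness of the lamination charts'' does not imply it. The paper's argument is dynamical, not topological: pick an unstable arc $I$ with interior in a single component $U_0$ and an endpoint on $\Lambda$; since $\mathrm{length}(f^n(I))\to\infty$, each $f^n(U_0)$ contains an unstable arc of length $\geq\eps$ and hence (by local product structure) has volume bounded below by a uniform constant, so the orbit $\{f^n(U_0)\}$ is finite and $U_0$ is periodic. The same volume trick gives finitely many accessible boundary leaves.

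For item (\ref{it-unst}) your proposed mechanism does not work: Proposition~\ref{p-volvslength} is a statement in the universal cover $\tilde M$, where $B_\eps(\tilde\Lambda)$ has infinite volume, so it gives no length bound on unstable arcs that stay near $\Lambda$; and in $M$ itself long unstable arcs can certainly sit inside sets of bounded volume, so the contradiction you describe does not arise. The paper's argument is instead local and direct. From a boundary leaf $L$ of the periodic component $U$, either the positive half-unstable leaf $W^u_+(x_0)$ of some boundary point stays entirely in $U$ (and then one is done), or it returns to $\Lambda$, giving a compact unstable segment $J\subset\overline U$ with both endpoints on $\Lambda$. The iterates $f^n(J)$ are unstable arcs in $U$ of length tending to infinity; since $\Eu$ is uniformly transverse to the $cs$-leaves, any point of $f^n(J)$ at $u$-distance $\geq\eps$ from both endpoints is at distance $\geq\eps$ from $\Lambda$ (local product structure boxes of size $\eps$). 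Extract a subsequential limit of these central sub-arcs to obtain the desired unstable arc whose forward iterates stay $\eps$-far from $\Lambda$.
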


We will prove this lemma by applying Theorems \ref{teoBI2} and \ref{teo-brit} and using Proposition \ref{prop-quasigeodesic}. %

\begin{proof} For the proof of items (\ref{it-quasiisomleaves}) and (\ref{it-polynomialarea}), let $\cW_\eps$ be an approximating foliation to $\Fcs$ with small $\eps$ given by Theorem \ref{teoBI2}. As the map $h_\eps$ is close to the identity, for every leaf $L \in \Lam^{cs}$ there is a leaf of $\cW_\eps$ having a loop freely homotopic to a fiber which is mapped onto $L$ by $h_\eps$. Therefore, the vertical sublamination of $\cW_\eps$ is mapped onto $\Lam^{cs}$. Applying Theorem \ref{teo-brit} to $\cW_\eps$ one obtains a sublamination which is mapped to $\Lam^{cs}$ made of vertical leaves (up to changing the fibration by a homotopy).%

We will use that $\tilde h_\eps$ in the universal cover is close to the identity and is a diffeomorphism when restricted to each leaf of $\tilde \cW_\eps$ (see Fact \ref{fact-diffeo}). In particular, it is enough to show items (\ref{it-quasiisomleaves}) and (\ref{it-polynomialarea}) for vertical leaves of $\tilde \cW_{\eps}$. 

To show (\ref{it-quasiisomleaves}) one needs to notice that the projection of the vertical leaves by $p_\eps$ is a family of curves in the surface that are in the hypothesis of Proposition \ref{prop-quasigeodesic}. As all fibers have bounded length, the result follows.  %

To show  (\ref{it-polynomialarea}) one has to notice that the length of the fibers is uniformly bounded and therefore the volume of the ball of radius $R$ of a vertical leaf in the universal cover is quadratic in $R$. (See also the Remark after \cite[Theorem 4.56]{Calegari}.)

We turn now to items (\ref{it-periodic}) and (\ref{it-unst}).

Take an unstable arc $I$ whose interior is in $M\setminus \Lam$ and has one endpoint $x_0 \in \Lam$. As the length of forward iterates of $I$ increases and each connected component of $M\setminus \Lam$ containing an unstable arc of length $\geq \eps$ has volume bounded from below, we know that the connected components of $M \setminus \Lam$ are periodic.  %

Moreover, each periodic component of $M\setminus \Lam$ can have at most finitely many accessible boundary leaves for a similar reason, so one obtains (\ref{it-periodic}).

To complete the proof of (\ref{it-unst}) consider an arc $I$ tangent to $\Eu$ which without loss of generality we assume its interior is contained in a fixed  connected component of $M \setminus \Lam$ and  whose endpoint $x_0$ belongs to a fixed accessible boundary leaf $L$. If the half unstable leaf $W^u_+(x_0)$ (i.e. the connected component of $W^u(x_0)\setminus \{x_0\}$ containing $I$) is not completely contained in $U$ it follows that there is a compact interval $J$ tangent to $\Eu$ in $U$ whose endpoints $x_0$ and $y_0$ belong to accessible boundary leaves. Iterating forward one obtains arbitrarily large unstable arcs contained in $U$ whose distance to the boundary of $U$ is  at least $\eps$ (the size of local product structure boxes). Taking limits, one completes the proof of (\ref{it-unst}). 
\end{proof} 

\section{Dynamically coherent case}\label{sec-coherent} 

In this section, $f: M \to M$ will be a dynamically coherent partially hyperbolic diffeomorphism of a circle bundle $M$ with $f$-invariant foliations $\cW^{cs}$ and $\cW^{cu}$ tangent respectively to $\Ecs$ and $\Ecu$. We will prove item (\ref{it-coherence}) of Theorem \ref{teo-novert} by contradiction, so we assume that there exists $\Lambda^{cs}$ a vertical sublamination of $\cW^{cs}$ to reach a contradiction. As in the previous section, we assume that $\Es,\Ec,\Eu$ are orientable and that their orientations are preserved by $Df$. 

Dynamical coherence will only be used at a few specific points.
In subsection \ref{ss.furthercomments}, we further discuss this usage as it is the subtlest part of the proof of Theorem \ref{teo-novert}. 

\subsection{An invariant annulus}\label{ss.invariantanuli}

Using the orientation of the bundles, define $\Ws_+(x)$ to be the positively oriented connected component of $\Ws(x)\setminus \{x\}$ with the point $x$ added. Similarly, one defines $\Ws_-(x)$, $\Wu_+(x)$ and $\Wu_-(x)$. 

Consider a minimal sublamination $\Lambda \subset \Lambda^{cs}$.  Lemma \ref{l.verticallamination} item (\ref{it-periodic}) implies that there exists a periodic component $U$ whose boundary leaves are also periodic. Without loss of generality we will assume that $f(U)=U$ and $f(L) = L$ for $L$ a boundary leaf of $\Lambda$. 

For $x \in L$ we define $J_x$ to be the closed segment of $\Wu(x)$
whose interior is contained in $U$ and such that one endpoint is $x$
and the other endpoint (if it exists)
belongs to $\Lambda$.
We can without loss of generality assume that $J_x \subset \Wu_+(x)$.
Let $\ell_x \in (0,\infty]$ denote the length of $J_x$. Clearly, $\ell_x = \infty$ means that $\Wu_+(x) \setminus \{x\} \subset U$. 

We show here: 

\begin{lemma}\label{lema-anulus}
There exists an essential closed annulus $X \subset L$ such that $f^k(X)$ is contained in the interior of $X$ for some $k>0$.
\end{lemma}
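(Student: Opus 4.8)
The plan is to produce the essential annulus $X$ inside the periodic boundary leaf $L$ by using the unstable segments $J_x$ to build, first, a continuous "length" function on $L$, and then a codimension-zero submanifold of $L$ that $f^k$ pushes strictly inward. First I would observe that, since $\Lambda$ is a minimal vertical sublamination, the boundary leaf $L$ is a leaf of $\cW^{cs}$ which is a vertical surface: its fundamental group contains a loop $c$ freely homotopic to a fiber, and by Lemma \ref{l.verticallamination} item (\ref{it-polynomialarea}) the leaf $L$ is parabolic, hence (being complete, orientable, boundaryless and parabolic with non-trivial $\pi_1$) it must be an annulus or a plane; verticality rules out the plane, so $L$ is a topological annulus (or cylinder). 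The deck transformation $c$ acting on $\tilde L$ generates $\pi_1(L) \cong \bbZ$, and the core circle of $L$ is freely homotopic to a fiber.

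Next I would analyze the function $x \mapsto \ell_x$. The key point is that $\ell_x$ is lower semicontinuous and, more importantly, the set $\{\ell_x = \infty\}$ is closed and $f$-invariant; if it were all of $L$ then every positive half-unstable leaf emanating from $L$ would stay in $U$, and iterating forward (as in the proof of Lemma \ref{l.verticallamination}(\ref{it-unst})) one gets unbounded unstable arcs trapped in $U$ at definite distance from $\bLam$ — this is fine and in fact is exactly the situation we want, but we still need a compact annulus. So the real mechanism is different: because $f(L)=L$ and $Df$ expands unstable arcs uniformly, $f(J_x) \supset J_{f(x)}$ as soon as $\ell_x<\infty$ (the image of the unstable segment from $x$ to $\Lambda$ contains the unstable segment from $f(x)$ to $\Lambda$, using that $\Lambda$ is invariant), so $\ell_{f(x)} \ge \lambda \ell_x$ for a uniform $\lambda>1$. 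Hence the "escaping set" $E = \{x \in L : f^n(x) \text{ stays in } L \text{ with } \ell_{f^n(x)} \to \infty\}$ — equivalently the set of $x$ whose forward orbit never has its $J$-segment hit $\Lambda$ — is forward invariant, open in the relevant sense, and by the volume-vs-length estimate (Proposition \ref{p-volvslength}) together with compactness of $U$, the complement of a neighborhood of $E$ in $L$ is eventually swept into $E$ under $f^k$.

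Then I would extract the annulus: take a large compact sub-annulus $X_0 \subset L$ containing the core circle (possible since $L$ is an annulus), chosen so that every point of $L \setminus X_0$ has already escaped, i.e. $\ell$ is large there; after replacing $X_0$ by $f^{-k}(X_0)$ for suitable $k$ and smoothing, $f^k$ maps $X_0$ into its interior. More carefully, since the core circle $\gamma$ of $L$ is freely homotopic to the fiber and $f_*$ fixes $c$, the curve $f^k(\gamma)$ is homotopic to $\gamma$ in $L$; combined with the fact that $f^k$ uniformly expands along $\Eu$ and contracts along $\Es$, one checks $f^k$ is "hyperbolic-like" transverse to $\gamma$ and some iterate maps a tubular neighborhood of $\gamma$ strictly inside itself. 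Taking $X$ to be this neighborhood (an essential closed annulus, since $\gamma$ is essential in $L$) gives $f^k(X) \subset \interior X$. I expect the main obstacle to be making the inward-trapping rigorous without dynamical coherence of the transverse direction: one must rule out that $f^k(X)$ wraps around or that the "outward" unstable direction could re-enter $X$ from the other side — this is handled by the uniform estimates of Lemma \ref{l.verticallamination}(\ref{it-unst}) (unstable arcs leaving $U$ stay at distance $\ge \eps$) plus the product structure, but checking that these combine to genuine strict inclusion of a genuinely essential annulus is the delicate part and is presumably where dynamical coherence or the branching-foliation machinery of \cite{BI} gets invoked.
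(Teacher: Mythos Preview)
Your first paragraph, showing that $L$ is a cylinder, is correct and in fact more explicit than the paper, which takes this for granted. You also correctly isolate the key relation $\ell_{f(x)} \ge \lambda\,\ell_x$ coming from the expansion along $E^u$ and the $f$-invariance of $\Lambda$ and $U$. But from that point the argument goes off course, because your choice of $X_0$ has the geometry inverted. You ask for $\ell$ to be \emph{large} on $L\setminus X_0$, whereas in fact $\ell$ is \emph{small} near the ends of $L$. This is precisely the content of the ``volume argument'': the set $\{x\in L:\ell_x\ge 1\}$ is compact in the leaf topology of $L$, since the pairwise disjoint unit-length unstable arcs issuing from it sweep out a definite volume in $M$. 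With $\ell$ large outside $X_0$ and $\ell$ growing under $f$, the inequality $\ell_{f^k(x)}>1$ would push points \emph{out} of $X_0$ rather than in, so no trapping could follow. Your subsequent detour through a core circle $\gamma$, tubular neighbourhoods and a ``hyperbolic-like transverse'' picture is an attempt to patch this, but it is both unnecessary and, as you yourself flag, never made rigorous.

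The paper's proof is a two-line rearrangement of the ingredients you already have. Take $X\subset L$ to be any compact essential annulus with $\{x:\ell_x\ge 1\}\subset \mathrm{int}(X)$; then $\ell_y<1$ for every $y\notin X$. By compactness there is $\delta>0$ with $\ell_x\ge\delta$ on $X$, and expansion along $E^u$ gives $k$ with $\ell_{f^k(x)}>1$ for every $x\in X$. Hence $f^k(X)\subset\{\ell\ge 1\}\subset\mathrm{int}(X)$. No core circle, no homotopy of $f^k(\gamma)$, no transverse hyperbolicity is needed; the strict inclusion is automatic once $X$ is placed on the correct side of the level set $\{\ell=1\}$.
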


\begin{proof} 
By a volume argument,
one can show that the set
\begin{math}
    \{ x \in L: \ell_x  \ge  1 \}
\end{math}
is compact in the topology of $L$.
Let $X \subset L$ be a compact annulus
such that $\ell_y < 1$ for all $y \notin X$.

There is $\delta > 0$ such that $\ell_x  \ge  \delta$
for all $x \in X$.
By the expansion of $\Eu$, there is $k  \ge  1$ such that
$\ell_{f^k(x)} > 1$ for all $x \in X$.
This implies that $f^k(X) \subset \mathrm{int} (X)$.
\end{proof}

\subsection{Finding a circle}\label{ss.findcircle}
Using that $f$ is dynamically coherent, we can define a center foliation in $L$ by intersecting it with the foliation $\cW^{cu}$ and choosing the connected components of the intersections. Let $\cF^c$ denote this center foliation. The purpose of this subsection is to show that there is a fixed circle leaf of $\cF^c$. A $\sigma$-curve will be an arc in a leaf of $\cF^\sigma$ with $\sigma=s,c,u$. As the bundles are all oriented, one can choose an orientation on $\sigma$-arcs. A concatenation of $s$ and $c$-curves is \emph{coherently oriented} if one can parametrise the concatenation so that each time one considers a $c$-curve the parametrisation has the same orientation as $\Ec$ (see figure \ref{f.coherent}). 

\begin{figure}[ht]
\vspace{-0.5cm}
\begin{center}
\includegraphics[scale=0.6]{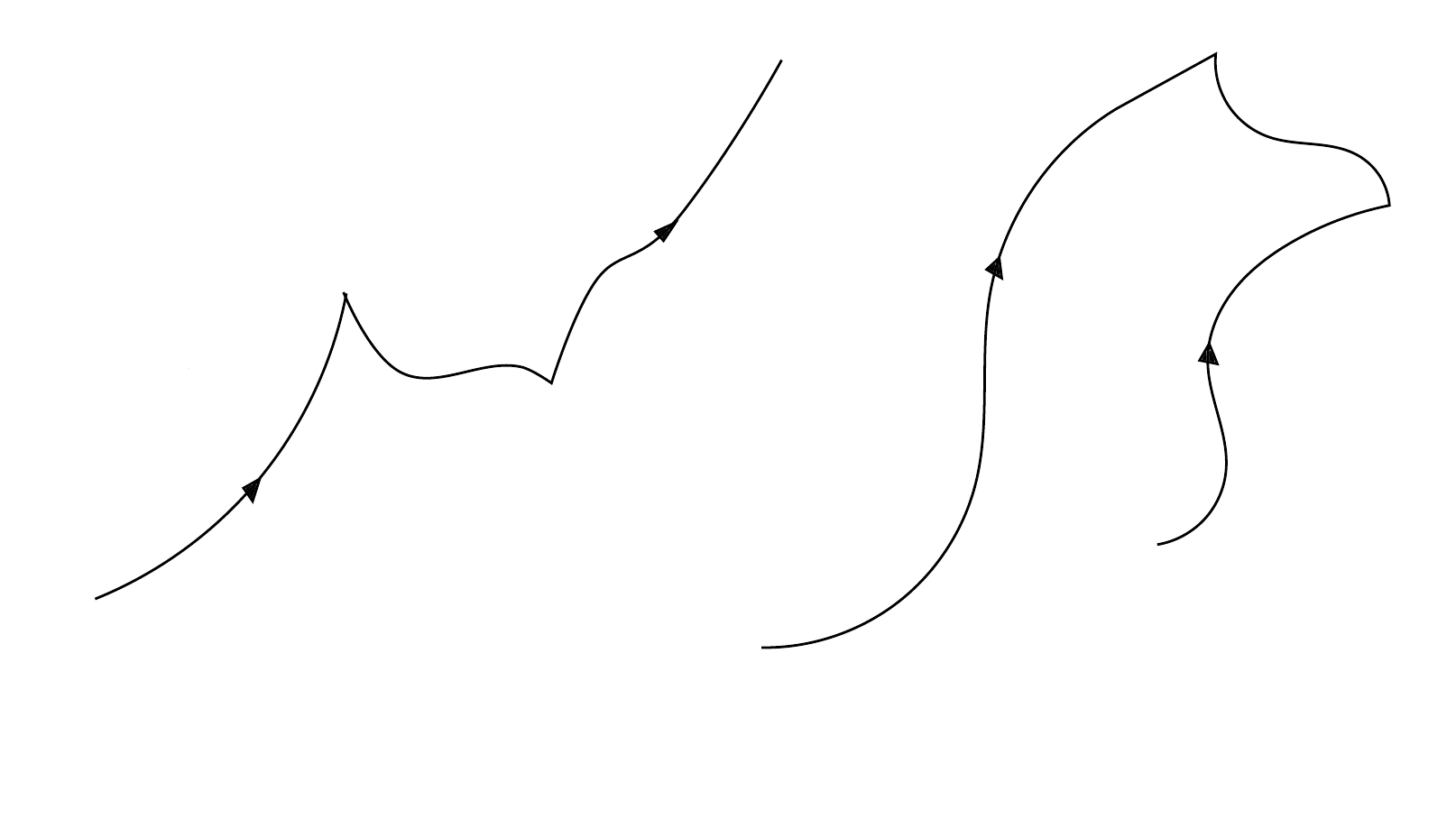}
\begin{picture}(0,0)
\put(-34,135){$s$}
\put(-45,95){$c$}
\put(-89,95){$c$}
\put(-166,95){$c$}
\put(-197,90){$s$}
\put(-245,60){$c$}
\end{picture}
\end{center}
\vspace{-0.5cm}
\caption{The concatenation on the left is coherently oriented and the one on the right is not.\label{f.coherent}}
\end{figure}

We will need the following consequence of a graph transform argument which we state without proof (see \cite{HPS} for similar arguments):

\begin{lemma}\label{l.graphtransf}
Consider a closed curve $\eta$ contained in a compact forward invariant subset of a fixed $cs$-leaf $L$ with the following properties:
\begin{itemize}
\item it is made from finitely many alternated $c$ and $s$-curves;
\item the curves are coherently oriented;
\item the length of forward iterates is bounded.
\end{itemize}
Then, the forward iterates of $\eta$ converge to a fixed circle $c$-leaf $\gamma$ in $L$.
\end{lemma}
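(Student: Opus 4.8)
\textbf{Proof proposal for Lemma \ref{l.graphtransf}.}

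The plan is to run a standard graph transform on the curve $\eta$ inside the fixed leaf $L$, using dynamical coherence to organise the geometry. First I would set up coordinates: in $L$ the center foliation $\cF^c$ and the (restricted) strong stable foliation $\cF^s\cap L$ give a (possibly singular, but here non-singular since $\eta$ avoids any issues) pair of transverse one-dimensional foliations, and $f|_L$ expands the $\cF^c$-direction is \emph{not} what we want — rather $f|_L$ contracts $\Es$ uniformly and is neutral-to-dominated along $\Ec$, so the $s$-direction is the ``fast contracting'' direction and $c$ is the ``slow'' direction. The hypothesis that $\eta$ is a coherently oriented concatenation of finitely many $c$- and $s$-arcs, contained in a compact forward-invariant set $K\subset L$ with $\length(f^n(\eta))$ bounded, is exactly the setup where one expects $f^n(\eta)$ to ``straighten out'' along $\cF^c$.

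The key steps, in order: (1) Lift to the universal cover $\tilde L$ of $L$ (or pass to the leaf space picture): since $\eta$ is coherently oriented, its lift $\tilde\eta$ is a properly embedded line that is a graph over the $\cF^c$-leaf space in the appropriate sense — coherent orientation prevents the $c$-arcs from ``backtracking'', and the $s$-arcs, being transverse, only contribute bounded jumps. (2) Show that the $s$-arcs disappear in the limit: because $Df|_{\Es}$ is a uniform contraction while the total length stays bounded and the $c$-arcs cannot collapse (coherent orientation again, plus domination giving a lower bound on how $c$-length behaves), the $\Es$-length of $f^n(\eta)$ tends to $0$; hence any sublimit of $f^n(\eta)$ is tangent to $\Ec$, i.e. is a union of $c$-leaves. (3) Show the limit is a single closed $c$-leaf: $f^n(\eta)$ is a sequence of closed curves in $K$ of bounded length, so by Arzel\`a--Ascoli (for curves, using that leaves are $C^1$ tangent to a continuous distribution) a subsequence converges uniformly to a closed curve $\gamma\subset K$ tangent to $\Ec$; since it is connected, tangent to the one-dimensional $\Ec$, and closed, it is a closed leaf of $\cF^c$. (4) Promote ``a convergent subsequence'' to ``$f^n(\eta)\to\gamma$'' and ``$\gamma$ fixed'': the graph transform is a contraction in the relevant $C^0$ metric on graphs over $\cF^c$ (the $s$-direction is uniformly contracted, and reparametrising by the $c$-foliation the map acts as a fibered contraction), so the limit is unique and independent of the subsequence, and applying $f$ to the relation $f^n(\eta)\to\gamma$ gives $f(\gamma)=\gamma$.

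The main obstacle I expect is step (2)–(4) made rigorous without assuming more smoothness than $C^{0,1+}$: one must check that ``graph over $\cF^c$'' is preserved by $f$ (this is where coherent orientation is essential, and one cites \cite{HPS}-style graph transform arguments adapted to the branching/low-regularity setting), and that the $c$-arcs do not shrink to zero length, which would let the whole curve collapse to a point rather than a circle — here one uses that $\eta$ is essential/non-nullhomotopic in the relevant sense (inherited from the geometry of the invariant annulus $X$ in the application) together with the length bound to get a definite lower bound on $c$-length surviving in the limit. Since the lemma is stated without proof and only used in the concrete situation of subsection \ref{ss.findcircle}, I would be content to indicate these points and refer to \cite{HPS} for the routine graph-transform estimates.
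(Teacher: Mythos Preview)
The paper does not actually prove this lemma: it is introduced with ``which we state without proof (see \cite{HPS} for similar arguments)''. Your outline is exactly the graph-transform argument the paper is pointing to, so there is nothing to compare against beyond confirming that your sketch matches the intended approach.

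One remark on your final worry. You do not need to import essentiality of $\eta$ from the ambient application in subsection~\ref{ss.findcircle}; coherent orientation already forces it. In the universal cover $\tilde L$ the stable foliation has a one-dimensional (possibly non-Hausdorff) leaf space oriented by $\Ec$, and the lift of a coherently oriented concatenation projects to this leaf space monotonically: strictly increasing along each $c$-arc, constant along each $s$-arc. If $\eta$ were null-homotopic its lift would be a closed curve, so this monotone projection would be constant, forcing every $c$-arc to have zero length; then $\eta$ would be a closed $s$-leaf, which does not exist. This disposes of the ``collapse to a point'' obstacle internally and guarantees the limit is a genuine circle rather than a degenerate object.
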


We use this lemma to establish the existence of a circle leaf. It is instructive to see how this proposition fails in the non-dynamically coherent example of \cite{HHU-example}.

\begin{prop}\label{prop-circleexist}
There exists a circle leaf $\gamma$ of $\cF^{c}$ in $L$ which is fixed by $f$. 
\end{prop}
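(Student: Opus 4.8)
\textbf{Proof strategy for Proposition \ref{prop-circleexist}.}
The plan is to produce a coherently oriented closed concatenation of $c$- and $s$-curves inside the essential annulus $X \subset L$ from Lemma \ref{lema-anulus}, and then feed it into the graph transform Lemma \ref{l.graphtransf} to obtain a fixed circle $c$-leaf. First I would recall that $X$ is an essential closed annulus with $f^k(X) \subset \interior(X)$ for some $k>0$; replacing $f$ by $f^k$ we may assume $k=1$, so $X$ is a compact forward invariant set in $L$ and $\bigcap_{n\ge 0} f^n(X)$ is a nonempty compact $f$-invariant set carrying the core homotopy of the annulus. The key point is that the center foliation $\cF^c$ restricted to $X$ cannot consist entirely of leaves that are noncompact or null-homotopic: if it did, a Poincar\'e--Bendixson / Reeb-type analysis on the annulus (using that leaves of $\cF^c$ have no topologically transverse intersections with the $s$-curves, which form a transverse one-dimensional foliation of $L$) would force either a closed leaf or a limiting behaviour incompatible with the annulus being essential and $f$ compressing it strictly inward. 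So I would argue that there is an essential simple closed curve in $X$ built from alternating $c$- and $s$-arcs.

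The construction of the concatenation $\eta$ is the heart of the argument. Starting from a point $x$ on the core circle of $X$, I would follow the leaf $\cF^c(x)$; either it closes up (in which case we are almost done, modulo checking it is essential and applying the lemma), or it is a properly embedded line in $L$. In the latter case I use the transverse $s$-foliation and the product structure to "jump" along a short $s$-curve to return near the starting fiber, closing up the path at the cost of inserting $s$-arcs; essentiality is guaranteed because the path stays in the essential annulus $X$ and winds around it once. Coherent orientation is arranged by using the $Df$-invariant orientation of $\Ec$ to orient every $c$-arc consistently, which is exactly the definition illustrated in Figure \ref{f.coherent}. Boundedness of the length of forward iterates of $\eta$ is the remaining hypothesis of Lemma \ref{l.graphtransf}: the $c$-arcs stay in $X$ which is forward invariant and compact, and their total number is controlled, while the $s$-arcs contract under $f$, so the lengths of $f^n(\eta)$ stay bounded. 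Then Lemma \ref{l.graphtransf} gives a fixed circle $c$-leaf $\gamma \subset L$.

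The main obstacle I anticipate is showing that one genuinely obtains an \emph{essential} closed curve out of $c$- and $s$-curves, rather than a null-homotopic one or something that degenerates. This is precisely where dynamical coherence is used in an essential way — without it the center "foliation" $\cF^c$ need not be a genuine foliation of $L$, and the example of \cite{HHU-example} shows the conclusion can fail. Concretely, I would need to rule out that every $c$-leaf meeting $X$ escapes to the ends of $L$ without ever linking the annulus core, and to control how the $s$-jumps interact with the dynamics; the volume/length estimates from Proposition \ref{p-volvslength} together with the strict inclusion $f^k(X)\subset\interior(X)$ and the quasi-isometric and parabolic properties of the vertical leaves from Lemma \ref{l.verticallamination} are the tools I expect to need here. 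A secondary technical point is verifying the coherent-orientation condition survives the closing-up procedure, which should follow from a careful bookkeeping of the transverse orientations of $\Es$ and $\Ec$ along $\eta$.
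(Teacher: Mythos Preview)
Your overall strategy---build a closed $cs$-concatenation in $X$ and feed it to Lemma \ref{l.graphtransf}---matches the paper. But you have inverted the difficulties: the step you call a ``secondary technical point'' is the crux of the argument, and the step you flag as the main obstacle (essentiality) is comparatively easy.

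The real problem is \emph{coherent orientation}. The paper first establishes a dichotomy: either some $c$-curve in $X$ returns to its own product box (and then one closes it with a single $s$-arc, checks bounded length, and applies Lemma \ref{l.graphtransf} directly), or there is a uniform bound $\ell_1$ on the length of any $c$-arc contained in $X$. In the second alternative your plan of following a single center leaf around the core cannot work---center leaves exit $X$ before wrapping---so any essential closed curve built from product boxes is a concatenation of \emph{several} $c$-arcs $\alpha_i$ joined by $s$-arcs $\beta_i$, and a priori consecutive $\alpha_i$ may point in opposite $\Ec$-directions. Saying you ``orient every $c$-arc consistently using the $Df$-invariant orientation of $\Ec$'' just restates the definition; the issue is whether such a curve \emph{closes up} under that constraint, and in general it does not (this is exactly what the right-hand picture in Figure \ref{f.coherent} depicts).

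This is where dynamical coherence is actually used, and not merely to make $\cF^c$ a genuine foliation. Coherence gives uniform continuity of center holonomy along stable leaves: if $x_0,y_0$ lie on the same stable leaf at distance $<\delta$, their center arcs of length $\le \ell_1$ stay $\eps$-close. Using this, whenever $\alpha_i$ and $\alpha_{i+1}$ are oppositely oriented and $\beta_i$ has been iterated short, one of them shadows the other and can be \emph{excised}, replacing $\beta_{i-1}$ by a slightly longer $s$-arc. This strictly decreases the number of $c$-arcs, so after finitely many cuts the concatenation is coherently oriented and Lemma \ref{l.graphtransf} applies. Your proposal contains no mechanism for this reduction, and ``careful bookkeeping of transverse orientations'' will not produce one.

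A smaller gap: your justification of the bounded-length hypothesis is incomplete. Forward invariance and compactness of $X$ do not bound the length of $c$-arcs under iteration---a curve can be arbitrarily long inside a compact annulus. In the paper this comes for free from the dichotomy: once $c$-arcs in $X$ have length $\le \ell_1$ and $s$-arcs length $\le \ell_0$, the iterates of $\eta$ are automatically bounded.
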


\begin{proof}
As the annulus $X$ given by Lemma \ref{lema-anulus} is compact, one can cover it by finitely many local product structure boxes. As there cannot be circle $s$-leaves, one has that there exists $\ell_0>0$ so that every $s$-curve of length $\geq \ell_0$ is not entirely contained in $X$. 

\begin{af}
Either there is a circle leaf  of $\cF^c$ fixed by $f$ in $X$, or there exists $\ell_1>0$ so that every $c$-curve of length $\geq \ell_1$ is not entirely contained in $X$.
\end{af}

\begin{proof}[Proof of claim.]
Cover $X$ with local product structure boxes. As $X$ is compact, it follows that it can be covered with finitely many such boxes. We will show that if a $c$-curve intersects a box twice, then one can apply Lemma \ref{l.graphtransf} to obtain a closed curve $\gamma$ of $\cF^c$ which is fixed by $f$. If each $c$-curve intersects each box at most once, then it is easy to see that there exists $\ell_1>0$ as in the conclusion of the claim. 

Indeed, consider a curve $\hat \eta \subset X$ intersecting the same local product structure box twice. By joining the endpoints with a stable curve, one can construct a coherently oriented closed curve $\eta$ and so to apply Lemma \ref{l.graphtransf} it is enough to show that the length of $f^n(\eta)$ remains bounded.

Assume that the length of $f^n(\eta)$ is not bounded, then, there must be some $n\geq 1$ and a local product structure box which $f^n(\eta)$ intersects at least three times. As $X$ is an annulus and $f^n(\eta)$ has no self-intersections, it follows that there must be some center arc inside $f^n(\eta)$ which can be closed by a stable curve into a nullhomotopic curve. This gives a contradiction
and proves the claim.
\end{proof}

The rest of this proof is devoted to showing that the second possibility
in the above claim actually implies the first one.  

    Using the finite cover of $X$ by local product structure boxes, one can construct a circle $\eta$ in $X$ made up of finitely many alternated $c$-curves $\alpha_i$ and $s$-curves $\beta_i$.  In principle, the orientation of the $c$-curves $\alpha_i$ may differ, but we will show that, under the assumption of dynamical coherence, one can remove center curves until all of them are oriented in the same way (and then Lemma \ref{l.graphtransf} gives the desired circle). Clearly, the iterates of $\eta$ have bounded length as they remain in $X$ and the $c$-curves and $s$-curves have uniformly bounded length. 

The important point of dynamical coherence is that we can ensure that for every $\eps>0$ there exists $\delta>0$ such that if two points $x_0,y_0$ are in the same stable leaf at distance less than $\delta$, then their $c$-curves of length $\leq \ell_1$ remain at distance less than $\eps$. 

Assume then that $\alpha_i$ and $\alpha_{i+1}$ have different orientations and $\beta_i$ is a stable arc joining the endpoint $x_0$ of $\alpha_i$ and the start point $y_0$ of $\alpha_{i+1}$. By iterating forward and using the contraction of $\beta_i$ we may assume that $\beta_i$ has length smaller than $\delta$, and therefore the arcs $\alpha_i$ and $\alpha_{i+1}$, being oriented differently fit in a map $\phi: [0,1]^2 \to X$ such that:

\begin{itemize}
\item $\phi(0,0) = x_0$ and $\phi(0,1)=y_0$, and $\phi( \{0\} \times [0,1])=\beta_i$,
\item the arcs $\phi(\{t\} \times [0,1])$ are stable and of length smaller than $\eps$,
\item either $\phi([0,1]\times \{0\}) = \alpha_i$ (with reversed orientation) or $\phi([0,1]\times \{1\}) = \alpha_{i+1}$ (with the correct orientation). 
\end{itemize}

\begin{figure}[ht]
\vspace{-0.5cm}
\begin{center}
\includegraphics[scale=0.6]{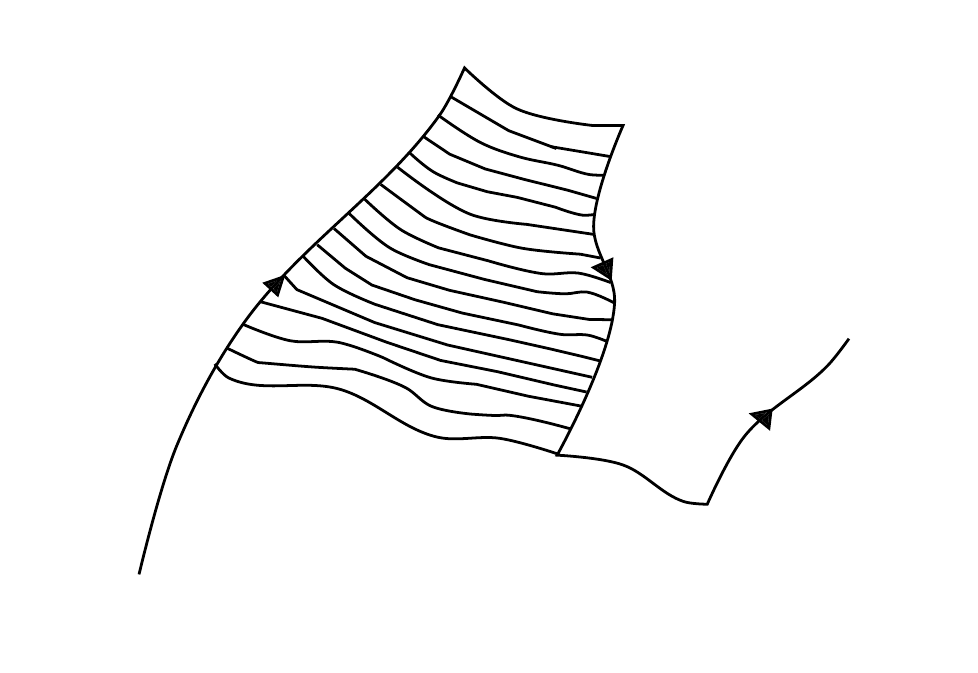}
\begin{picture}(0,0)
\put(-79,100){$s$}
\put(-79,30){$s$}
\put(-145,60){$\alpha_i$}
\put(-65,60){$\alpha_{i+1}$}
\put(-30,45){$\alpha_{i+2}$}
\end{picture}
\end{center}
\vspace{-0.5cm}
\caption{Cutting a curve in the concatenation.\label{f.cutcurve}}
\end{figure}

Without loss of generality (by changing slightly the choices) one can assume that in the last item it is not possible that both possibilities occur, i.e. by choosing the initial curve so that no pair of endpoints of $c$-curves belong to the same stable leaf. 

Therefore, one can remove one of the center arcs (say, $\alpha_i$ if $\phi([0,1]\times \{0\}) = \alpha_i$) and cut $\alpha_{i+1}$ by eliminating $\beta_i$ and changing it from the stable arc made by the concatenation of $\beta_{i-1}$ and $\phi(\{1\} \times [0,1])$ which joins an endpoint of $\alpha_{i-1}$ and an endpoint of the new $\alpha_{i+1}$. This procedure allows one to reduce the number of $c$-curves whenever two consecutive curves are not coherently oriented, and therefore, after finitely many steps one finds a circle made up of alternated $s$ and $c$-curves all of which are coherently oriented.
Lemma \ref{l.graphtransf} then yields the desired fixed circle leaf.
\end{proof}

\subsection{No vertical leaves}\label{ss.finishproof}
Proposition \ref{prop-circleexist} provides a circle leaf $\gamma$ in $L$ which is fixed by $f$. By construction, this circle belongs to a leaf $F$ of $\cW^{cu}$ which is also fixed by $f$. 

\begin{lemma}\label{lem.othercircle}
There exists a circle leaf $\alpha$ of $\cF^c$ in $L$ distinct from $\gamma$. 
\end{lemma}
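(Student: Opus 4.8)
The strategy is to produce a second fixed circle leaf by looking at the other boundary component of the invariant annulus $X \subset L$ (or, equivalently, at a second accessible boundary leaf on the other side), and to use the geometry forced by Lemma~\ref{l.verticallamination}. Recall that $\gamma$ bounds on one side the region of $L$ swept by the unstable segments $J_x$ that stay in $U$; the key point is that this region cannot be all of $L$, because $L$ is parabolic (Lemma~\ref{l.verticallamination}(\ref{it-polynomialarea})) while a half-leaf of $\Wu$ escaping to infinity inside a single $cs$-leaf would force exponential area growth along $L$. So the ``escaping'' behaviour must be confined, and on the complementary side of $X$ the function $\ell_x$ is again bounded, giving a second compact sub-annulus $X'$ disjoint from a neighbourhood of $\gamma$.

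\textbf{Key steps, in order.}
First I would run the argument of Lemma~\ref{lema-anulus} and Proposition~\ref{prop-circleexist} ``from the other side'': the closed annulus $X$ has two essential boundary circles, and the attracting behaviour $f^k(X)\subset\operatorname{int}(X)$ means $f^{-k}$ pushes a collar of $\partial X$ outward; applying the volume/expansion estimate to the complementary annular region $L \setminus \operatorname{int}(X)$ (which is nonempty and essential since $L$ is an open annulus — a leaf of a vertical lamination projecting to a quasigeodesic in $\Sigma$, hence an annulus by Lemma~\ref{l.verticallamination}(\ref{it-quasiisomleaves})) yields a second compact essential annulus $X'$ on which $\ell_x$ is bounded below and above. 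Second, I would repeat the claim inside Proposition~\ref{prop-circleexist} for $X'$: either there is a fixed circle $c$-leaf inside $X'$ — and this circle is disjoint from $\gamma$ because $X'$ can be chosen disjoint from $X$ — or every $c$-curve in $X'$ is short, in which case the cutting procedure with Lemma~\ref{l.graphtransf} again manufactures a fixed circle leaf $\alpha \subset X'$. Third, I would check $\alpha \neq \gamma$: since $\gamma$ was obtained inside $X$ and $\alpha$ inside $X'$, and $X \cap X' = \emptyset$, the two circles are distinct leaves of $\Fc$.

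\textbf{Main obstacle.}
The delicate point is ensuring that the complementary region $L \setminus \operatorname{int}(X)$ is genuinely essential and ``visible'' from the dynamics — i.e. that one is not simply rediscovering $\gamma$. Here one must use that $L$ is an annulus (not a disc or plane): this comes from $L$ being a vertical leaf, so $\pi_1(L)$ contains the center element $c$, and by Lemma~\ref{l.verticallamination}(\ref{it-quasiisomleaves}) its projection to $\Sigma$ is a quasigeodesic, forcing $L \cong S^1 \times \mathbb{R}$ with exactly two ends. The two ends give two essential boundary circles of $X$, and the attracting annulus argument applies verbatim near each. A secondary subtlety is that the construction of $X$ in Lemma~\ref{lema-anulus} used $J_x \subset \Wu_+(x)$; near the \emph{other} end of $L$ one may need $\Wu_-$ instead (or the unstable segment on the other side), but the volume argument of Lemma~\ref{l.verticallamination}(\ref{it-unst}) — iterating unstable arcs forward while staying at distance $\geq \eps$ from $\Lambda$ — is insensitive to this choice, so the same estimate produces $X'$. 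Once $X'$ is in hand, the rest is a verbatim repetition of Proposition~\ref{prop-circleexist}.
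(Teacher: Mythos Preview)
Your approach has a genuine gap: the construction of a second attracting annulus $X' \subset L$ disjoint from $X$ cannot be carried out as you describe. By the very choice of $X$ in Lemma~\ref{lema-anulus}, one has $\ell_y < 1$ for every $y \in L \setminus X$; so on the complement the function $\ell_x$ is already uniformly small, and no volume argument produces a second region where it is bounded below. Switching to $\Wu_-$ and a hypothetical second component $U'$ of $M \setminus \Lambda$ does not rescue the plan: even granting that such a $U'$ exists on the other side of $L$, the resulting annulus $X'$ (the locus where the $\Wu_-$-arc into $U'$ is long) has no reason to lie in $L \setminus X$ --- both $X$ and $X'$ are compact ``core'' annuli of $L$ and may well overlap --- so your conclusion $\alpha \neq \gamma$ from $X \cap X' = \emptyset$ is unsupported. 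Your appeal to parabolicity is also off target: $\Wu$ is transverse to the $cs$-leaf $L$, not tangent to it, so the phrase ``a half-leaf of $\Wu$ escaping to infinity inside a single $cs$-leaf'' has no content here.

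The paper's proof is entirely different and much shorter. It uses two ingredients you did not touch: the minimality of the sublamination $\Lambda$, which forces $L$ to accumulate on itself in $M$, and the fixed $cu$-leaf $F$ through $\gamma$ introduced immediately before the lemma. Since $\gamma$ is essential in the vertical leaf $L$ it is freely homotopic to a fiber, so $F$ is itself vertical; verticality of both $L$ and $F$ forces every connected component of $L \cap F$ to be compact, hence a circle. When $L$ returns close to itself, the transversal $F$ meets $L$ in a second such component $\alpha \neq \gamma$. Note finally that the lemma does not require $\alpha$ to be $f$-invariant --- only that it be a circle leaf of $\cF^c$ --- so rerunning the graph-transform machinery of Proposition~\ref{prop-circleexist} is working much harder than necessary.
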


\begin{proof} As the leaf $L$ belongs to a minimal sublamination, $L$ must accumulate on itself. As the leaves $L$ and $F$ are both vertical, the connected components of their intersections are compact and therefore are circles. Since $F$ intersects $L$ at $\gamma$, it will intersect in a different circle $\alpha \neq \gamma$ when $L$ accumulates on itself. 
\end{proof}

Let $Y$ and $Z$ be the connected components of $L \setminus \gamma$. We assume that the curve $\alpha$ given by Lemma \ref{lem.othercircle} is contained in $Y$. Without loss of generality we may assume that $Y$ contains $\Ws_+(y)\setminus \{y\}$ for some $y \in \gamma$.

Define $\Wsp(\gamma) = \bigcup_{x  \in  \gam}$ $\Wsp(x)$
and define $V \subof L$ as the open annulus between $\gamma$ and $\alpha$.

Using the ideas of \cite[Section 2.1]{BonattiWilkinson} one gets:

\begin{lemma} \label{lemma:Yequals}
    The equality
    \, $Y \, = \, \Wsp(\gamma) \setminus \gamma \, = \,
    \bigcup_{n  \in  \bbZ} f^n(V)$ \, holds.
\end{lemma}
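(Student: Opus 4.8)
The plan is to establish the two equalities $Y = \Wsp(\gamma)\setminus\gamma$ and $\Wsp(\gamma)\setminus\gamma = \bigcup_{n\in\bbZ}f^n(V)$ by combining the structure of the center foliation $\cF^c$ on $L$ with the dynamics of $f$ restricted to $L$, following the strategy of \cite[Section 2.1]{BonattiWilkinson} for the classical annulus picture. First I would observe that since $\gamma$ is a fixed circle $c$-leaf and $Y$ is chosen to contain $\Wsp(y)\setminus\{y\}$ for $y\in\gamma$, the set $\Wsp(\gamma)\setminus\gamma = \bigcup_{x\in\gamma}(\Wsp(x)\setminus\{x\})$ is an open, connected, $f$-invariant subset of $Y$ (invariance because $f(\gamma)=\gamma$ and $f$ preserves the positive stable orientation). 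To see $\Wsp(\gamma)\setminus\gamma = Y$, note that $\gamma$ being a circle $c$-leaf means the $s$-holonomy along $\gamma$ returns to a nearby $c$-leaf, and the lamination $L$ has no circle $s$-leaves; so the stable saturate of $\gamma$ inside $L$ is an open set whose frontier in $L$ must be a union of $c$-leaves. Any frontier $c$-leaf would have to be a full center leaf on which the stable holonomy from $\gamma$ fails to arrive, and I would rule this out using that $f$ fixes $\gamma$ and expands nothing on $\Ec$ uniformly one way — more precisely, a frontier leaf would give a fixed or periodic $c$-leaf between $\gamma$ and it, and the contraction of $\Es$ forces every point of $Y$ to be reached by a finite stable segment from $\gamma$ (this is exactly the Bonatti–Wilkinson argument). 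Hence $\Wsp(\gamma)\setminus\gamma$ exhausts the component $Y$.

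For the second equality, I would use the circle $c$-leaf $\alpha\subset Y$ from Lemma \ref{lem.othercircle} and the open annulus $V$ between $\gamma$ and $\alpha$. Since $\alpha$ is a circle $c$-leaf in the minimal lamination and $L$ accumulates on itself, $\alpha$ is in the closure of the stable saturate of $\gamma$; I would show the forward or backward $f$-iterates of the fundamental domain $V$ (bounded by $\gamma$ and $\alpha$, and by $f^{\pm1}(\alpha)$) sweep out all of $Y$. The key point is that $f|_\gamma$ has a fixed point $y$ whose stable segment $\Wsp(y)$ crosses $V$, $f(\alpha)$ is again a circle $c$-leaf on the same side, and $\{f^n(\alpha)\}_{n\in\bbZ}$ is a sequence of disjoint circle $c$-leaves escaping both ends of $Y$ (towards $\gamma$ under one direction, towards the accessible boundary under the other), because the stable length from $\gamma$ to $f^n(\alpha)$ goes to $0$ as $n\to+\infty$ and to $\infty$ as $n\to-\infty$ by contraction/expansion of $\Es$. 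The annuli $f^n(V)$ between consecutive leaves $f^n(\alpha)$ and $f^{n+1}(\alpha)$ then tile $Y$, giving $\bigcup_n f^n(V) = Y = \Wsp(\gamma)\setminus\gamma$.

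The main obstacle I anticipate is justifying that the frontier of the stable saturate of $\gamma$ inside $L$ contains no ``bad'' center leaf and that $\alpha$ and all its iterates are genuinely circle (not line) $c$-leaves lying properly between $\gamma$ and the accessible boundary — i.e., controlling the topology of $\cF^c$ on the vertical leaf $L$. Here I would lean on the fact (already invoked in Lemma \ref{lem.othercircle}) that $L$ and $F$ are both vertical, so their intersection components are compact, hence circles, together with dynamical coherence to guarantee $\cF^c$ is an honest foliation on $L$ with well-defined holonomy; the non-existence of circle $s$-leaves (used in Proposition \ref{prop-circleexist}) then pins down that the stable direction is ``transverse'' to the circle $c$-leaves and the saturation argument closes up. The uniform contraction of $\Es$ and expansion of $\Eu$, applied along $\gamma$ which is compact and fixed, supplies the quantitative control needed to make the exhaustion by $\{f^n(V)\}$ work.
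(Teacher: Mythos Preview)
Your outline has the right picture in mind but skips the one step that carries all the content of the lemma. You argue that $\{f^{-n}(\alpha)\}_{n\ge 0}$ ``escapes towards the accessible boundary'' because the stable length from $\gamma$ to $f^{-n}(\alpha)$ tends to infinity. That implication fails: growing stable length does not prevent the circles $f^{-n}(\alpha)$ from accumulating on a proper closed subset of $Y$, with the connecting stable arcs spiralling more and more. Ruling this out is exactly what the lemma is about. The paper handles it by a short contradiction: assuming $\bigcup_n f^n(V)\subsetneq Y$, pick a frontier point $y$; local product structure near $y$ produces arbitrarily short stable segments $I_n\subset\Ws(y)$ joining $f^{-n}(\alpha)$ to $f^{-n+1}(\alpha)$, and then $f^n(I_n)$ are stable arcs from $\alpha$ to $f(\alpha)$ of length tending to zero, contradicting that $\alpha$ and $f(\alpha)$ are disjoint. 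Your proposal has no substitute for this argument.

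A second, smaller gap is that you never explain why $V$ (and in particular $\alpha$) lies in $\Wsp(\gamma)$. The paper gets this from a Poincar\'e--Bendixson argument: since the stable foliation on $L$ has no closed leaves and both $\gamma$ and $\alpha$ are circle $c$-leaves transverse to it, the annulus $V$ between them is trivially foliated by stable arcs. This immediately gives the inclusions $\bigcup_n f^n(V)\subset \Wsp(\gamma)\setminus\gamma\subset Y$, so the paper proves both equalities at once by showing the outer inclusion is an equality. Your separate argument for $\Wsp(\gamma)\setminus\gamma=Y$ via ``the frontier is a union of $c$-leaves, hence contains a periodic one'' is not justified (the frontier of a stable-saturated open set is stable-saturated, not obviously $c$-saturated) and is in any case unnecessary once the sandwich is set up.
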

\begin{proof}
    The stable foliation $\Ws$ on $L$ has no compact leaves
    and the curves $\alpha$ and $\gamma$ are both transverse to $\Ws$.
    By a Poincar\'e-Bendixson argument,
    one may use these properties to
    show that the annulus $V$ is trivially foliated
    by stable segments.
    In other words, both $V$ and $\alpha$ are subsets of 
    $\Wsp(\gamma) \setminus \gamma$, and so
    \[
        \bigcup_{n  \in  \bbZ} f^n(V)
        \, \subof \,
        \Wsp(\gamma) \setminus \gamma
        \, \subof \,
        Y.
    \]
    Up to replacing $f$ by an iterate,
    we may assume that $f(\alpha)$ is contained in $V$.

    If $\bigcup_{n  \in  \bbZ} f^n(V)$ is a proper subset of $Y$,
    then there is a point $y  \in  Y$ lying on its boundary.
    In particular,
    \[
    \dist(y, f \invn(\alpha)) = \dist(y,  f \invn(V))  \to  0
    \]
    as $n  \to  \infty$.
    By considering the local product structure near $y$,
    there is, for each large $n$, a small stable segment $I_n \subof \Ws(y)$
    which has one endpoint on $f \invn(\alpha)$ and the other on $f^{-n+1}(\alpha)$.
    Then, $\{f^n(I_n)\}$ is a sequence of stable curves, each connecting
    $\alpha$ to $f(\alpha)$ and whose lengths tend to zero.
    As $\alpha$ and $f(\alpha)$ are disjoint, this gives a contradiction.
\end{proof}

\begin{cor} \label{cor:alconverge}
    The curves $f^n(\alpha)$ converge uniformly to $\gamma$ as $n  \to  \infty$.
\end{cor}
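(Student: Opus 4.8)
\textbf{Proof proposal for Corollary \ref{cor:alconverge}.}

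The plan is to deduce the uniform convergence $f^n(\alpha) \to \gamma$ directly from Lemma \ref{lemma:Yequals} together with the contraction along stable leaves. By Lemma \ref{lemma:Yequals} we know that $Y = \Wsp(\gamma) \setminus \gamma = \bigcup_{n \in \bbZ} f^n(V)$, and (up to replacing $f$ by an iterate, as in the proof of that lemma) that $f(\alpha) \subset V$, so the annuli $f^n(V)$ are nested: $f^{n+1}(V) \subset f^n(V)$ for all $n$, and $\alpha$ together with $f(\alpha)$ bound the fundamental domain $V$. First I would observe that since $V$ is trivially foliated by stable segments connecting $\gamma$-side to $\alpha$, the region $\bigcup_{n \ge 0} f^n(\overline{V})$ between $\gamma$ and $\alpha$ is exactly the closed sub-annulus of $L$ bounded by $\gamma$ and $\alpha$, and $f^n(\alpha)$ is its "inner" boundary circle, marching monotonically toward $\gamma$.

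The key step is to control the stable length. For each $x \in \alpha$ let $\beta_x \subset \Ws(x)$ be the stable segment from $x$ to $\gamma$; these segments foliate the closed annulus between $\gamma$ and $\alpha$ (this is the trivial foliation from Lemma \ref{lemma:Yequals}, extended across $\gamma$ using that $\gamma$ is transverse to $\Ws$), and they have lengths bounded by some $C_0 < \infty$ since $\alpha$ is compact. Then $f^n(\beta_x)$ is a stable segment from $f^n(x) \in f^n(\alpha)$ to the point $f^n(\pi(x)) \in \gamma$, where $\pi$ is the projection along $\beta$. Because $f$ uniformly contracts stable leaves (in the adapted metric, $\|Df|_{\Es}\| < \sigma < 1$), we get $\length(f^n(\beta_x)) \le \sigma^n C_0 \to 0$ uniformly in $x$. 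Hence $\dist(f^n(x), \gamma) \le \sigma^n C_0$ for every $x \in \alpha$, which is precisely uniform convergence $f^n(\alpha) \to \gamma$.

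The one point requiring a little care is that the stable foliation on the annulus between $\gamma$ and $\alpha$ really is the product foliation with all leaves joining $\gamma$ to $\alpha$ — equivalently, that no stable leaf issuing from $\alpha$ returns to $\alpha$ or spirals without reaching $\gamma$. This is exactly what the Poincar\'e--Bendixson argument in the proof of Lemma \ref{lemma:Yequals} gives: $\Ws$ restricted to $L$ has no compact leaves, and both $\alpha$ and $\gamma$ are transverse to $\Ws$, so the closed annulus they cobound is trivially foliated by stable arcs running from one boundary circle to the other; in particular each such arc has a well-defined endpoint on $\gamma$, giving the projection $\pi$ and the uniform length bound $C_0$. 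I would expect this transversality/Poincar\'e--Bendixson bookkeeping to be the only mild obstacle; once the product structure is in hand, the contraction estimate is immediate and finishes the proof.
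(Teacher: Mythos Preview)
Your proposal is correct and is exactly the intended deduction: the paper states this corollary without proof immediately after Lemma~\ref{lemma:Yequals}, and the argument you give---each $x\in\alpha$ is joined to $\gamma$ by a stable arc of uniformly bounded length (by the trivial product structure on the closed annulus $\overline V$ and compactness of $\alpha$), and these arcs contract exponentially under $f^n$---is precisely the way the corollary falls out of that lemma. The care you take with the Poincar\'e--Bendixson product structure is appropriate but already supplied by the proof of Lemma~\ref{lemma:Yequals}, so nothing further is needed.
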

Define $\Wsp(\alpha) = \bigcup_{x  \in  \alpha}$ $\Wsp(x)$
and note that $V = Y \sans \Wsp(\alpha)$.

Recall the notation $J_y$ and $\ell_y$ defined above.

\begin{lemma} \label{lemma:lfinite}
    The length $\ell_y$ is uniformly bounded for $y  \in  \Wsp(\alpha)$.
\end{lemma}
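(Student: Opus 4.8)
The goal is to show that $\ell_y$ stays uniformly bounded as $y$ ranges over the stable saturate $\Wsp(\alpha)$ of the circle $\alpha$. I would argue by contradiction: suppose there is a sequence $y_n \in \Wsp(\alpha)$ with $\ell_{y_n} \to \infty$, so the unstable segments $J_{y_n}$ become arbitrarily long while remaining (on their interiors) inside the periodic component $U$. The key structural facts to exploit are Corollary \ref{cor:alconverge} (the curves $f^n(\alpha)$ converge uniformly to $\gamma$) together with the description of $Y$ from Lemma \ref{lemma:Yequals} as the increasing union $\bigcup_{n} f^n(V)$, where $V$ is the annulus trapped between $\gamma$ and $\alpha$.

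First I would reduce to points on $\alpha$ itself. Each $y \in \Wsp(\alpha)$ lies on a stable segment joining it to a point $y' \in \alpha$; since stable segments are uniformly contracted under forward iteration and the unstable length function $\ell$ varies continuously and is controlled by local product structure, it suffices to bound $\ell_{y'}$ for $y' \in \alpha$. (One has to be a little careful: $\ell_y$ could in principle jump, but along a short stable segment the unstable leaf $\Wu_+(y)$ shadows $\Wu_+(y')$ by local product structure until one of them hits $\Lambda$, and $\Lambda$ is $f$-invariant and closed, so the comparison is uniform on a neighbourhood of $\alpha$.) Second, for $y' \in \alpha$, I would use that $\alpha$ is a compact circle and $\ell$ is upper semicontinuous (a limit of long unstable arcs avoiding $\Lambda$ is a long unstable arc avoiding $\Lambda$, by closedness of $\Lambda$ and the fact that no two $cs$-leaves cross), so $\ell$ attains a finite maximum on $\alpha$ \emph{provided} it is finite at every point. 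Thus the real content is: for $y' \in \alpha$, $\Wu_+(y')$ must exit $U$, i.e. $\ell_{y'} < \infty$. Here I would invoke that $\alpha$ lies in $Y = \bigcup_n f^n(V)$ and that $f^n(\alpha) \to \gamma$: the unstable leaf through a point of $\alpha$, followed forward, lands near $\gamma$; but $\gamma \subset L \subset \Lambda$, so $\Wu_+(f^N(y'))$ meets $\Lambda$ after finitely many steps, hence so does $\Wu_+(y')$, giving $\ell_{y'} < \infty$. Finally, combining the compactness of $\alpha$ with upper semicontinuity yields a uniform bound on $\alpha$, and then the contraction-of-stable argument propagates the bound to all of $\Wsp(\alpha)$.

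The main obstacle I anticipate is the step controlling how $\ell_y$ behaves as $y$ moves within a stable leaf and as one takes limits — precisely, ruling out that $\Wu_+(y)$ ``just misses'' $\Lambda$ while a nearby $\Wu_+(y')$ hits it, which would let $\ell$ blow up along $\alpha$ despite being finite. This should be handled by the no-topological-crossing property of the branching foliations together with the local product structure: an unstable arc in $U$ that comes close to $\Lambda$ either crosses a boundary leaf of $\Lambda$ (impossible for a $cu$-object versus a $cs$-leaf without a topological crossing, or it terminates on $\Lambda$) so the dichotomy ``$\ell_y$ finite vs.\ $\Wu_+(y)\setminus\{y\} \subset U$'' is robust under the relevant limits. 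The quantitative input is the volume-length estimate of Proposition \ref{p-volvslength}: a very long unstable arc staying $\eps$-far from $\Lambda$ would force the periodic component $U$ to have volume exceeding $\volume(M)$, the same mechanism used in Lemma \ref{l.verticallamination}(\ref{it-unst}), and this is what ultimately forbids $\ell$ from being unbounded on the compact set $\alpha$.
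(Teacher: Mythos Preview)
Your proposal has a genuine gap. The central claim that $\ell$ is upper semicontinuous is false: $\ell$ is only \emph{lower} semicontinuous. If $y_n \to y$ and each $J_{y_n}$ has interior in $U$ and length at least $L$, the limiting unstable arc of length $L$ from $y$ lies only in $\overline{U}$; its interior can touch $\Lambda$, because the other endpoint of $J_y$ may sit on an accessible boundary leaf of the lamination that the nearby arcs $J_{y_n}$ simply miss. Thus pointwise finiteness of $\ell$ on the compact circle $\alpha$ would \emph{not} give a uniform bound. Your argument for pointwise finiteness is also off: the convergence $f^N(y') \to \gamma$ happens \emph{inside the leaf} $L$, whereas $\Wu_+(y')$ leaves $L$ transversely into $U$; proximity of the basepoint to $\gamma$ within $L$ says nothing about whether that transverse unstable arc ever returns to $\Lambda$. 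Finally, the reduction from $\Wsp(\alpha)$ to $\alpha$ ``by local product structure'' is not justified, since points of $\Wsp(\alpha)$ lie at arbitrarily large stable distance from $\alpha$.

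The paper avoids all of this with a two-line argument that goes back to the compact annulus $X$ of Lemma~\ref{lema-anulus}, which was built precisely so that $\ell_y < 1$ for every $y \in L \setminus X$. Since $Y = \bigcup_n f^n(V)$ is an exhaustion by nested open sets (Lemma~\ref{lemma:Yequals}) and $X$ is compact, there is $n$ with $X \cap Y \subset f^n(V)$; hence $f^n(\Wsp(\alpha)) = Y \setminus f^n(V)$ misses $X$, so $\ell \le 1$ there, and applying $f^{-n}$ (with its uniform expansion constant on $\Eu$) gives the uniform bound on $\Wsp(\alpha)$ itself. No semicontinuity of $\ell$ enters.
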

\begin{proof}
    As the set $X$ in the proof of Lemma \ref{lema-anulus} is compact,
    there is $n  \in  \bbZ$ such that $X \cap Y \subof f^n(V)$.
    This shows that $\ell_y  \le  1$ for $y  \in   f^n(\Wsp(\alpha))$.
\end{proof}

We now consider unstable holonomies.
For $\eps > 0$, define the local stable manifold
\begin{math}
    \Ws_\eps(x) = \{ y  \in  \Ws(x) : d_s(x,y) < \eps \}.
\end{math}

\begin{lemma} \label{lemma:holonomy}
    For any $y  \in  L$ and $z  \in  \Wu(y)$, there is $\eps > 0$
    and a continuous map $h^u : \Ws_\eps(z) \to L$
    such that $h^u(x)  \in  \Wu(x)$ for all $x  \in  \Ws_\eps(z)$.

    Moreover, for any $C > 0$, there is a uniform value of $\eps > 0$
    such that the above holds for all points with
    $d_u(y,z) < C$.
\end{lemma}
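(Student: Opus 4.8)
The plan is to construct the unstable holonomy map directly from the local product structure of the partially hyperbolic splitting, taking care that the image lands inside the two-dimensional leaf $L$ rather than merely in $M$. First I would fix a size $\eps_0 > 0$ small enough that local product structure boxes of radius $\eps_0$ exist everywhere: for any pair of nearby points $p, q$ with $d_M(p,q) < \eps_0$, the local stable manifold $\Ws_{\eps_0}(p)$ and the local unstable manifold $\Wu_{\eps_0}(q)$ meet in exactly one point, denoted $[p,q]$, and this point depends continuously on $(p,q)$. Since $z \in \Wu(y)$ and $y \in L$, and $L$ is a leaf of the $cs$-foliation $\cW^{cs}$ (tangent to $\Es \oplus \Ec$, hence saturated by strong stable leaves), the local stable manifold of any point close to $y$ inside $L$ stays inside $L$; this is the key feature of dynamical coherence we exploit.

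The main step is then the following. Given $x \in \Ws_\eps(z)$ with $\eps$ small, I would set $h^u(x) = [w, x]$ where $w$ is the point of $L$ obtained by sliding $y$ along the stable direction appropriately — more precisely, one first notes that $x$ is close to $z$, hence close (within a controlled multiple of $\eps$, using that $d_u(y,z) < C$) to $y \in L$, and then takes $h^u(x)$ to be the unique intersection of $\Wu_{\mathrm{loc}}(x)$ with $L$. Existence and uniqueness of this intersection follow from transversality of $\Wu$ to $\cW^{cs}$ together with the local product structure, and continuity of $h^u$ follows from continuity of the foliations $\Fu$ and $\cW^{cs}$ and of the local product map. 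By construction $h^u(x) \in \Wu(x)$, as required. To get the uniform $\eps$ for all $(y,z)$ with $d_u(y,z) < C$, I would cover an unstable arc of length $C$ by finitely many product-structure boxes: the holonomy along the whole arc of length $\le C$ is a composition of boundedly many short holonomies, each defined on a uniform stable ball, so the composition is defined on a stable ball whose radius $\eps$ depends only on $C$ (and the geometry of $M$, the angles between the bundles, and the local expansion/contraction rates), not on the particular endpoints.

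The point I expect to be the main obstacle is making precise that the unstable holonomy \emph{stays within the single leaf $L$} and is well-defined as a map into $L$ rather than into some nearby leaf of $\cW^{cs}$. This is exactly where dynamical coherence enters: because $\cW^{cs}$ is a genuine foliation, the leaf $L$ is a codimension-one embedded surface locally, so $\Wu_{\mathrm{loc}}(x)$, being a short arc transverse to the $cs$-distribution and starting at a point $x$ close to $L$, must cross $L$ exactly once in the relevant product box. Quantifying ``close to $L$'' uniformly requires that, along the fixed unstable arc from $y$ to $z$ (of length $< C$), the local stable manifolds of length $\eps$ issuing from points of that arc sweep out a neighbourhood that meets $L$ in a graph over a stable disc; this is standard once one has the product boxes, but the bookkeeping of how $\eps$ shrinks under composition of the finitely many short holonomies is the technical heart of the argument. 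I would organise this by an inductive estimate: if the holonomy along an unstable subarc of length $t$ is defined on $\Ws_{\delta(t)}(\cdot)$, then extending the subarc by one more box of definite size multiplies $\delta$ by a definite factor $\lambda < 1$ (coming from unstable expansion, equivalently stable contraction under the holonomy), so $\delta(C) \ge \lambda^{N} \eps_0$ where $N \sim C / \eps_0$ is the number of boxes; this yields the desired uniform $\eps = \eps(C) > 0$.
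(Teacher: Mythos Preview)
Your proposal is correct and is precisely an elaboration of what the paper invokes: the paper's proof is the single sentence ``This follows from standard properties of foliations and the fact that $\Wu$ is uniformly transverse to $L$,'' and your construction via local product structure boxes, transversality of $\Wu$ to the $cs$-leaf $L$, and a finite covering of the unstable arc of length at most $C$ is exactly the standard argument being alluded to. Your concern about the image landing in $L$ rather than a nearby $cs$-leaf is slightly overcautious --- by definition $h^u(x)$ is the intersection of $\Wu_{\mathrm{loc}}(x)$ with $L$, so the only issue is existence and uniqueness of that intersection, which transversality handles --- but the argument is sound.
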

\begin{proof}
    This follows from standard properties of foliations
    and the fact that $\Wu$ is uniformly transverse to $L$.
\end{proof}

Define an annulus $A = \bigcup_{x  \in  \alpha} J_x$.

\begin{lemma} \label{lemma:tail}
    Any stable leaf intersects $A$ in at most one point.
          \end{lemma}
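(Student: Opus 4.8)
The plan is to argue by contradiction: suppose some stable leaf meets the annulus $A = \bigcup_{x \in \alpha} J_x$ in two distinct points $w_1, w_2$. Since $A$ is foliated by the unstable segments $J_x$, $x \in \alpha$, each $w_i$ lies on a segment $J_{x_i}$ with $x_i \in \alpha$; note that $w_1 \neq w_2$ forces the two unstable segments to be distinct (a single unstable arc is transverse to $\Ws$, hence meets a stable leaf at most once), so $x_1 \neq x_2$. Thus $w_1$ and $w_2$ lie on a common stable leaf, while $x_1, x_2 \in \alpha$ lie on the curve $\alpha$, and we have an unstable holonomy along $J_{x_1}$ and $J_{x_2}$ connecting the stable leaf through $w_1, w_2$ down to points near $\alpha$. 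The first step is to pin this down: using Lemma \ref{lemma:holonomy} (the unstable holonomy onto $L$, with uniform $\eps$ for bounded unstable distance), and the fact that $\ell_y$ is uniformly bounded on $\Wsp(\alpha)$ by Lemma \ref{lemma:lfinite}, I would slide $w_1$ and $w_2$ back along their unstable segments to obtain two points $z_1, z_2 \in \alpha$ that are joined by a stable arc lying (near) $L$ — i.e., $z_1 \in \Ws(z_2)$ with both on $\alpha$.

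The next step is to derive a contradiction from the existence of a stable arc joining two points of $\alpha$. Here I would use that $\alpha$ is a circle leaf of the center foliation $\cF^c$ in $L$, transverse to the stable foliation $\Ws$ of $L$, and that, by Lemma \ref{lemma:Yequals} and Corollary \ref{cor:alconverge}, the iterates $f^n(\alpha)$ converge uniformly to the fixed circle $\gamma$ while $V = Y \sans \Wsp(\alpha)$ is trivially foliated by stable segments running from $\gamma$ to $\alpha$. Iterating the stable arc between $z_1$ and $z_2$ forward contracts its length to zero; but its endpoints stay on $f^n(\alpha)$, and a short stable arc with both endpoints on the (locally embedded, $\Ws$-transverse) circle $f^n(\alpha)$ together with a sub-arc of $f^n(\alpha)$ bounds a disk in $L$ containing a stable half-leaf — running the Poincaré–Bendixson / triviality argument of Lemma \ref{lemma:Yequals} inside this disk produces a compact (circle) leaf of $\Ws$ in $L$, or equivalently a stable curve that is trapped, contradicting that $\Ws$ on $L$ has no compact leaves. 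Alternatively, and perhaps more cleanly, once $z_1, z_2 \in \alpha$ lie on the same stable leaf one can repeat verbatim the endgame of the proof of Lemma \ref{lemma:Yequals}: the local product structure near the limit point gives stable curves connecting $\alpha$ to $f(\alpha)$ of lengths tending to zero, while $\alpha \cap f(\alpha) = \emptyset$, a contradiction.

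I expect the main obstacle to be the bookkeeping in the first step: making sure that pushing $w_1, w_2$ along the unstable segments $J_{x_1}, J_{x_2}$ really lands on $\alpha$ (rather than escaping through $\Lambda$ or leaving the product-structure region), and that the resulting stable arc genuinely lies in $L$ so that the one-dimensional (Poincaré–Bendixson) arguments in $L$ apply. This is where Lemma \ref{lemma:lfinite} (finiteness of $\ell_y$ on $\Wsp(\alpha)$) and the uniform-$\eps$ clause of Lemma \ref{lemma:holonomy} do the work: the unstable segments involved have uniformly bounded length, so the holonomies are defined and uniformly non-degenerate, and the stable arc between $w_1$ and $w_2$ maps under the holonomy to a stable arc between two points of $\alpha$. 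Once that is set up, the contradiction is essentially the no-compact-stable-leaf fact in $L$ combined with contraction along $\Ws$, exactly as in Lemma \ref{lemma:Yequals}.
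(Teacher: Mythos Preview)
There is a genuine gap in your first step. You claim that the unstable holonomy sends the stable arc $[w_1,w_2]$ to a \emph{stable} arc in $L$, i.e.\ that the basepoints $z_1,z_2\in\alpha$ satisfy $z_1\in\Ws(z_2)$. This is not true in general: the unstable holonomy $h^u$ between $cs$-leaves preserves center leaves (since moving along $\Wu$ keeps you in the same $cu$-leaf), but it has no reason to preserve the stable subfoliation unless $E^s\oplus E^u$ happens to be integrable, which is not assumed here. The image $h^u([w_1,w_2])$ is only a continuous curve in $L$ with endpoints on $\alpha$; its tangent need not lie in $E^s$. Consequently your entire second step collapses: both the forward-iteration argument and the Poincar\'e--Bendixson/``endgame of Lemma~\ref{lemma:Yequals}'' argument rely on having a genuine stable arc in $L$ whose length contracts under $f^n$, and that is exactly what you do not have.

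The paper's proof uses the same holonomy construction and the same two lemmas you cite (Lemma~\ref{lemma:lfinite} and Lemma~\ref{lemma:holonomy}) for the bookkeeping you worry about in your last paragraph---they are used to show that $h^u$ extends over the whole segment $[y,z]$, with image contained in $\Wsp(\alpha)$. But the contradiction is obtained by a purely topological orientation argument rather than a dynamical one: since $Dh^u$ carries the oriented direction $E^s_+$ to a direction transverse to $E^c$ (hence to $\alpha$) in $L$, and this transverse side is the same at both endpoints, the curve $h^u([y,z])$ must leave $\alpha$ into one component of $L\setminus\alpha$ at $h^u(y)$ and arrive at $h^u(z)$ from the \emph{other} component. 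As $\alpha$ separates the cylinder $L$, and the interior of the curve avoids $\alpha$, this is impossible. Your overall strategy (push down to $L$ via $h^u$, then get a contradiction from a curve joining two points of $\alpha$) is the right one; what needs replacing is the unjustified ``stable arc'' claim by this orientation argument.
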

\begin{proof}
    Suppose $y  \in  A$ and $z  \in  \Wsp(y) \cap A$.
    Then let $[y,z]$
    represent the compact stable segment between the two points.
    Without loss of generality,
    assume no other points in $[y,z]$ intersect $A$.
    That is, $[y,z] \cap A = \{y,z\}$.

    Let $I$ be the largest connected subset of $[y,z]$
    such that $y  \in  I$ and for which there is a well-defined
    unstable holonomy $h^u$ : $I  \to  L$.
    If $x  \in  I \sans \{y,z\}$, then $x \notin A$ and therefore $h^u(x) \notin \alpha$.
    From this, one may show that $h^u(I) \subof \Wsp(\alpha)$.
    By the previous two lemmas, there is a fixed $\eps > 0$
    such that if $x  \in  I$, then $\Ws_\eps(x) \cap [y,z] \subof I$.
    It follows that $I = [y,z]$.

    By considering the orientation of $\Es$, one sees that
    $h^u(I)$ must exit one side of $\alpha$ and return to $\alpha$ on the other side.
    Since the circle $\alpha$ cuts the cylindrical leaf $L$ into two pieces,
    such a path is not possible.
\end{proof}

Define $\Wup(\gamma) = \bigcup_{x \in \gamma} \Wup(x)$.

\begin{cor}
    Any stable leaf intersects $\Wup(\gamma)$
    in at most one point.
\end{cor}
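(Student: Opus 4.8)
The plan is to deduce this corollary from Lemma \ref{lemma:tail} by relating $\Wup(\gamma)$ to the annulus $A = \bigcup_{x\in\alpha} J_x$ through the unstable holonomy established in Lemma \ref{lemma:holonomy}. The key observation is that every half-unstable leaf $\Wup(x)$ for $x\in\gamma$ is, by the definition of $J_x$ and $\ell_x$, the segment $J_x$ up to its first return to $\Lambda$; since $\gamma\subset L$ lies in the boundary leaf and the unstable arcs $J_x$ have their interiors in the periodic component $U$, the set $\Wup(\gamma)$ is precisely $A$ with $\alpha$ replaced by $\gamma$. More usefully, by Corollary \ref{cor:alconverge} the curves $f^n(\alpha)$ converge to $\gamma$, and the unstable arcs depend continuously on their base point, so $\Wup(\gamma)$ is a limit of the annuli $f^n(A)$ (each of which is again an annulus of the same form, built over $f^n(\alpha)$).

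First I would make precise that $\Wup(\gamma) = \bigcup_{x\in\gamma} J_x$: for $x\in\gamma$, the half-unstable leaf starting at $x$ immediately enters $U$ (since $\gamma$ sits on the accessible boundary leaf $L$ and $U$ contains $\Wsp(y)\setminus\{y\}$ for $y\in\gamma$, the unstable direction points into $U$ on the relevant side), so its initial segment up to the first hit on $\Lambda$ is exactly $J_x$ in the notation preceding Lemma \ref{lema-anulus}. Then I would argue by contradiction: suppose a stable leaf meets $\Wup(\gamma)$ in two points $y$ and $z$, with $[y,z]$ the compact stable segment between them. Using the unstable holonomy from $\gamma$ to $f^n(\alpha)$ — valid for a uniform $\eps$ on bounded unstable distance by the ``moreover'' clause of Lemma \ref{lemma:holonomy} — one pushes the configuration $(y,z,[y,z])$ to a nearby configuration sitting on $f^n(A)$, i.e. two points of a single stable leaf lying in the annulus $\bigcup_{x\in f^n(\alpha)} J_x$. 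Applying $f^{-n}$ and invoking the $f$-invariance of the whole picture, this contradicts Lemma \ref{lemma:tail}.

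Alternatively, and perhaps more cleanly, one can bypass the holonomy step: Lemma \ref{lemma:tail} applied to $f^n(A)$ (which is legitimate since $f^n(A) = \bigcup_{x\in f^n(\alpha)} J_x$ has exactly the same defining properties as $A$, with $\alpha$ replaced by the circle leaf $f^n(\alpha)$ of $\cF^c$) says each stable leaf meets $f^n(A)$ in at most one point. Since $f^n(\alpha)\to\gamma$ uniformly and the assignment $x\mapsto J_x$ is upper semicontinuous with $\ell_y$ uniformly bounded near $\gamma$ by Lemma \ref{lemma:lfinite} (as $\gamma\subset\Wsp(\alpha)$ up to the local product structure), any two points of $\Wup(\gamma)$ on a common stable leaf would be approximated by two points of $f^n(A)$ on nearby stable leaves; a local product structure argument then forces two genuine intersection points of a single stable leaf with $f^n(A)$ for large $n$, the desired contradiction.

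The main obstacle I expect is the continuity/limiting argument: the unstable arcs $J_x$ are only defined up to their first return to $\Lambda$, so $\ell_x$ can jump, and one must check that $\Wup(\gamma)$ really is contained in (or well-approximated by) $\bigcup_n f^n(A)$ rather than reaching strictly further — this is where Lemma \ref{lemma:lfinite} (uniform finiteness of $\ell_y$ on $\Wsp(\alpha)$) and Corollary \ref{cor:alconverge} do the essential work, guaranteeing that the arcs over $\gamma$ are genuine limits of the arcs over $f^n(\alpha)$ and do not ``escape to infinity.'' Once that is pinned down, the contradiction with Lemma \ref{lemma:tail} is immediate from the invariance of the configuration under $f$ and the local product structure.
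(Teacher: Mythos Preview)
Your overall strategy—reduce to Lemma~\ref{lemma:tail} applied to $f^n(A)$ via Corollary~\ref{cor:alconverge}—is the paper's strategy. But your execution has a genuine gap.

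The identity $\Wup(\gamma) = \bigcup_{x\in\gamma} J_x$ is false. By definition $J_x$ is only the \emph{initial} segment of $\Wup(x)$, from $x$ to the first return to $\Lambda$; the half-leaf $\Wup(x)$ continues past that endpoint (it re-enters other components of $M\setminus\Lambda$, returns to $\Lambda$ again, etc.). So a point $z \in \Wup(\gamma)$ may lie far beyond the ``annulus over $\gamma$,'' and your approximation of $\Wup(\gamma)$ by the compact annuli $f^n(A)$ simply does not reach such $z$. (Relatedly, your parenthetical $\gamma\subset\Wsp(\alpha)$ is backwards: by Lemma~\ref{lemma:Yequals} one has $\alpha\subset\Wsp(\gamma)\setminus\gamma$, while $\Wsp(\alpha)=Y\setminus V$ is the part of $Y$ \emph{beyond} $\alpha$, away from $\gamma$; so Lemma~\ref{lemma:lfinite} gives no bound on $\ell_x$ for $x\in\gamma$.)

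The paper avoids this by never trying to approximate $\Wup(\gamma)$ itself. Instead, for an arbitrary $z\in\Wup(\gamma)$ it applies the holonomy $h^u:\Ws_\eps(z)\to L$ of Lemma~\ref{lemma:holonomy} directly. The foot point $h^u(z)$ lies on $\gamma$, and the image $h^u(\Ws_\eps(z))$ is a curve in $L$ that crosses from one side of $\gamma$ to the other (this is the transversality of stable and center directions in $L$). Since $f^n(\alpha)\to\gamma$, that curve meets $f^n(\alpha)$ for all large $n$, which means $\Ws_\eps(z)$ meets $f^n(A)$. Doing this for both $z$ and a hypothetical second point $\hat z\in\Ws(z)\cap\Wup(\gamma)$ produces two points of a single stable leaf in $f^n(A)$, contradicting Lemma~\ref{lemma:tail}. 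The point is that the holonomy back to $L$ works for \emph{any} $z\in\Wup(\gamma)$, however far out on the unstable leaf, so no limiting or semicontinuity argument for $J_x$ is needed.
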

\begin{proof}
    For a point $z  \in  \Wup(\gamma)$,
    the image of the holonomy $h^u$ : $\Ws_\eps(z)  \to  L$
    is a continuous curve $h^u(\Ws_\eps(z))$
    which crosses from one side of $\gam$ to the other.
    By \cref{cor:alconverge}, this curve intersects $f^n(\alpha)$ for all large $n$.
    Consequently, $\Ws_\eps(z)$ intersects $f^n(A)$ for all large $n$.

    If $\Ws(z)$ intersects $\Wup(\gamma)$ in another point, $\hat z$,
    then there are $\eps$ and $n$ such that $f^n(A)$ intersects
    $\Ws_\eps(z)$ and $\Ws_\eps(\hat z)$ in two distinct points,
    and \cref{lemma:tail} gives
    a contradiction.
\end{proof}

 This  contradicts the following general lemma and completes the proof of item (\ref{it-coherence}) of Theorem \ref{teo-novert}.

\begin{lemma}\label{lem-twopoints} 
   There is  $z \in \Wup(\gamma)$ such that $\Wsp(z)$ intersects $\Wup(\gamma)$
    in at least two distinct points.
\end{lemma}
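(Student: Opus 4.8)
The plan is to use only that $\Fu$ has no compact leaf --- so each ray $\Wup(x)$, $x\in\gamma$, has infinite length and hence $\Wup(\gamma)$ is a \emph{non-compact} subset of $M$ --- together with the trivial fact that an unstable arc is transverse to the strong stable foliation $\Fs$ (since $\Eu$ is transverse to $\Es$). The idea is that $\Wup(\gamma)$, being non-compact in the compact manifold $M$, must accumulate on a point $w$ lying off $\gamma$; near $w$ one can find two \emph{disjoint} unstable arcs contained in $\Wup(\gamma)$, and a single stable plaque crossing both of them will produce the two required points.

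First I would produce a good accumulation point. Since $\Wup(\gamma)\supset\Wup(x)$ is non-compact it is not closed in $M$, so $\overline{\Wup(\gamma)}\setminus\Wup(\gamma)\neq\emptyset$; and this set cannot be contained in $\gamma$ (which lies in $\Wup(\gamma)$, as $x\in\Wup(x)$), for otherwise $\Wup(\gamma)$ would be closed, hence compact. Fix $w\in\overline{\Wup(\gamma)}\setminus\Wup(\gamma)$ with $\dist(w,\gamma)>0$ and a sequence $z_n\in\Wup(\gamma)$ with $z_n\to w$. Choose $\eta>0$ smaller than $\tfrac12\dist(w,\gamma)$ and than the size of a uniform foliation chart for $\Fs$, and for each large $n$ let $I_n$ be the unstable arc of length $2\eta$ centred at $z_n$ inside its unstable leaf. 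Because $z_n$ is eventually at distance at least $\tfrac12\dist(w,\gamma)>\eta$ from $\gamma$, the arc $I_n$ never meets $\gamma$, so $I_n\subset\Wup(\gamma)$; moreover $I_n$ is transverse to $\Fs$ and, for $n$ large, contained in one fixed foliation chart of $\Fs$ around $w$.

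Next I would extract two disjoint such arcs near $w$. If infinitely many of the $I_n$ lie on pairwise distinct unstable leaves, any two of these are disjoint. Otherwise infinitely many lie on a single unstable leaf $\ell$; parametrising $\ell$ by arclength and writing $z_n=\ell(s_n)$, one checks --- using that $w\notin\Wup(\gamma)$ --- that $(s_n)$ has no bounded subsequence, so after passing to a subsequence the parameters $s_n$ are spaced more than $4\eta$ apart and the corresponding arcs are again disjoint. In either case we get two disjoint unstable arcs $I,I'\subset\Wup(\gamma)$, both inside the fixed $\Fs$-chart and passing within $\eta/10$ of $w$. In that chart $\Fs$ is a product, and transversality forces $I$ and $I'$ to project injectively onto arcs of the transversal that both contain a neighbourhood of the image of $w$; picking a point $s$ of that neighbourhood, the stable plaque through $s$ meets $I$ in a point $p$ and $I'$ in a point $q$. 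Then $p,q\in\Wup(\gamma)$, $q\in\Ws(p)$, and $p\neq q$ since $I$ and $I'$ are disjoint. Relabelling if necessary so that $q$ lies on the positive stable ray of $p$, the point $z:=p$ satisfies the conclusion: $\Wsp(z)$ contains the two distinct points $p$ and $q$ of $\Wup(\gamma)$.

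The step I expect to be the main obstacle is the extraction of the two disjoint arcs, i.e. controlling the case where the approaching points $z_n$ pile onto a single unstable leaf that accumulates on $w$; this is exactly where one uses that $\Fu$ has no compact leaf, that $w\notin\Wup(\gamma)$, and that $z_n$ stays a definite distance from $\gamma$ (so the small unstable arcs through $z_n$ genuinely lie in $\Wup(\gamma)$ and can be taken far apart along $\ell$). Everything else is the local product structure of $\Fs$ and elementary point-set topology.
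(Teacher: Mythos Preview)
There is a genuine dimension error in the final step. The strong stable foliation $\Fs$ is one-dimensional in the 3-manifold $M$, so a foliation chart for $\Fs$ has a \emph{two}-dimensional transversal. Your unstable arcs $I$ and $I'$ therefore project to two arcs in a 2-disk, and there is no reason these arcs should intersect; hence no reason a single stable plaque should meet both. The sentence ``project injectively onto arcs of the transversal that both contain a neighbourhood of the image of $w$'' is exactly where this breaks: a 1-dimensional arc cannot contain a 2-dimensional neighbourhood. Your argument would be valid if $\Fs$ were codimension one, but it is not.

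If one tries to repair this by replacing $\Fs$ with the codimension-one branching foliation $\Fcs$, the transversality argument does produce two points $p,q\in\Wup(\gamma)$ lying in the same local $cs$-plaque, but this is still weaker than lying on the same stable leaf. Inside the $cs$-plaque one has a local product of center and stable, and to pass from $p$ to the stable leaf of $q$ one must slide along a center arc; the issue is whether that center arc stays inside $\Wup(\gamma)$. This is precisely the extra ingredient in the paper's proof: it first shows (via the holonomy of the center foliation inside $\Wup(\gamma)$ near $\gamma$) that $\Wup(\gamma)$ is saturated by positive center rays, so that moving in the $\Wcp$ direction from a point of $\Wup(\gamma)$ remains in $\Wup(\gamma)$. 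With that saturation property in hand, one works in a $cs$-plaque $P$, finds two points of $\Wup(\gamma)\cap P$ on distinct local center leaves, and then the center ray of one necessarily meets the stable leaf of the other inside $\Wup(\gamma)$. Your outline contains neither the passage to $\Fcs$ nor any substitute for the center-saturation step, and without one of these the argument does not close.
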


\begin{proof}
    First, if $\Wup(\gamma)$ were complete as a submanifold,
    it would accumulate at a point in $M$, implying the result .
    Therefore, assume it is not complete.

    Look at the holonomy of the center foliation inside $\Wup(\gamma)$
    in a small neighbourhood of $\gamma$.
    Trivial holonomy would imply that $\Wup(\gamma)$ is complete (see Lemma \ref{lemma:Yequals} or \cite{BonattiWilkinson}).
    Therefore, the holonomy is non-trivial and there is a circle
    in $\Wup(\gamma)$ close to $\gamma$ and transverse to the center direction.
    Up to changing the orientation of $\Ec$, we have the following
    property{:}
    if $\hat x \in \Wup(\gamma)$ and $y \in \Wcp(\hat x)$,
    then $y \in \Wup(\gamma)$.

    The incomplete submanifold $\Wup(\gamma)$ accumulates
    at a point $z_0 \notin \Wup(\gamma)$.
    As $\Wup(z_0)$ is not compact, there is a small plaque $P$ in a leaf of the $cs$-foliation
    such that $\Wup(z_0)$ intersects $P$
    in an infinite set.
    We further assume that the center and stable foliations have
    local product structure on $P$.
    There is a convergent sequence of points
    $z_k \in \Wup(z_0) \cap P$
    and, for each $k$, a sequence $y_{k,\ell} \in \Wup(\gamma) \cap P$
    such that $\lim_{\ell \to \infty} y_{k,\ell} = z_k$.

    Case one{:}
    all $y_{k,\ell}$ lie on the same local center leaf.
    Then, all $z_k$ also lie on the same local center leaf, and one can find
    indices $k,\ell,m$ such that $z_m \in \Wcp(y_{k,\ell})$.
    This implies that $z_m \in \Wup(\gamma)$, a contradiction.

    Case two{:}
    there are $y_{k,\ell}$ on different center leaves.
    Call the two points $y$ and $\hat y$.
    Then, either
    $\Wcp(y)$ intersects $\Ws(\hat y)$
    or
    $\Wcp(\hat y)$ intersects $\Ws(y)$.
    In either case, the intersection is in $\Wup(\gamma)$
    and the result is proved.
\end{proof}

\subsection{Further comments on the proof}\label{ss.furthercomments} 
There are two points where dynamical coherence is used in the proof. One is to be able to apply Lemma \ref{l.graphtransf}. The most crucial one is to establish Proposition \ref{prop-circleexist}, in particular, to show that if two $c$-curves start close, then they remain close in a compact set. This last point fails in general as can be seen in the example of \cite{HHU-example}. However, there is a statement that can be shown with essentially the same ideas (and we will use it for studying the absolute partially hyperbolic case) and works even if the $cs$ and $cu$-directions integrate into general branching foliations.

\begin{prop}\label{p.novertifcircle} 
Any minimal sublamination $\Lambda$ of the vertical $cs$-lamination $\Lambda^{cs}$ is disjoint from the vertical $cu$-lamination $\Lambda^{cu}$.  
\end{prop}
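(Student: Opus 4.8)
\textbf{Proof proposal for Proposition \ref{p.novertifcircle}.}

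The plan is to argue by contradiction, mirroring the structure of the dynamically coherent case but replacing the center foliation $\Fc$ by suitable center arcs coming from intersections of individual $cs$- and $cu$-leaves, so that the only inputs needed are the branching-foliation properties of Theorem \ref{teoBI1} and the no-torus-leaves result (Theorem \ref{teoHHU}), not integrability. So suppose $\Lambda$ is a minimal sublamination of $\Lambda^{cs}$ and that $\Lambda \cap \Lambda^{cu} \neq \emptyset$. Pick a vertical $cu$-leaf $F$ meeting $\Lambda$. Since both $F$ and the $cs$-leaves of $\Lambda$ are vertical, a connected component $\gamma$ of $F \cap L$ (for $L$ a boundary leaf of $\Lambda$, which we may take periodic by Lemma \ref{l.verticallamination}\eqref{it-periodic}) is a compact curve, hence a circle, and after replacing $f$ by an iterate we may assume $f(L)=L$, $f(F)=F$, and $f(\gamma)=\gamma$. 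This is the analogue of the fixed circle leaf produced by Proposition \ref{prop-circleexist}, but obtained here directly from verticality without any coherence.

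Next I would rerun the $cs$-side analysis of Subsection \ref{ss.finishproof} using this $\gamma$. Because $L$ lies in a minimal sublamination, $L$ accumulates on itself, and as in Lemma \ref{lem.othercircle} the leaf $F$ meets $L$ in a second circle $\alpha \neq \gamma$; the region between them is an annulus $V \subof L$, trivially stably foliated by a Poincar\'e--Bendixson argument exactly as in Lemma \ref{lemma:Yequals}, giving $\Wsp(\gamma)\setminus\gamma = \bigcup_n f^n(V) = Y$ and the convergence $f^n(\alpha) \to \gamma$ of Corollary \ref{cor:alconverge}. The key structural fact I want is again that $\ell_y$ is uniformly bounded on $\Wsp(\alpha)$ (Lemma \ref{lemma:lfinite}), so that the tail annulus $A = \bigcup_{x\in\alpha} J_x$ is compact; its proof (using compactness of the set $\{\ell_x \ge 1\}$) only uses the volume estimate Proposition \ref{p-volvslength} and goes through verbatim. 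Then the unstable-holonomy argument of Lemma \ref{lemma:holonomy}, Lemma \ref{lemma:tail} and its corollary shows that every stable leaf meets $\Wup(\gamma)$ in at most one point. Finally, Lemma \ref{lem-twopoints}, whose proof is purely about the incomplete submanifold $\Wup(\gamma)$ and local product structure of the $cs$-branching foliation, produces a stable leaf meeting $\Wup(\gamma)$ twice — a contradiction.

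The subtle point — and the reason this is stated as a separate proposition — is that in the non-coherent setting ``the center direction'' is not a globally defined foliation, so each place where the coherent proof invoked $\cW^{cu}$ or $\Fc$ must be rephrased in terms of a single leaf $F$ of $\Fcu$ and the center arcs inside it. Concretely, the main obstacle is verifying that $F$ behaves like a genuine surface transverse to $\Fs$ near $\gamma$: one needs that $F$ is an embedded cylindrical leaf in a neighbourhood of the fixed circle, that $\Wu$ is uniformly transverse to $L$ along it, and that the ``holonomy of the center foliation inside $\Wup(\gamma)$'' appearing in the proof of Lemma \ref{lem-twopoints} can be read off from center curves inside $F$ without branching interfering. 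Branching of $\Fcu$ could in principle let $F$ self-cross or let nearby center curves merge; here one uses that no two leaves of $\Fcu$ topologically cross (Theorem \ref{teoBI1}) together with the verticality of $L$ and $F$ to ensure the relevant intersection components are circles and the local picture is a product. Once this is set up, every subsequent step transfers from Subsection \ref{ss.finishproof} essentially word for word, and the no-torus hypothesis (valid on circle bundles over higher-genus surfaces, and over hyperbolic orbifolds after passing to covers) guarantees $\gamma$ and $\alpha$ are homotopically non-trivial so the cutting arguments apply.
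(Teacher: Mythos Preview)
The paper does not give a detailed proof of this proposition; it only remarks in \S\ref{ss.furthercomments} that it ``can be shown with essentially the same ideas'' as the dynamically coherent argument of Section~\ref{sec-coherent}. Your proposal reads this hint correctly: the point is that the two places where coherence was used --- applying Lemma~\ref{l.graphtransf} and the cutting procedure inside Proposition~\ref{prop-circleexist} --- both served only to manufacture a fixed circle $c$-leaf $\gamma$ in $L$, and the hypothesis $\Lambda\cap\Lambda^{cu}\neq\emptyset$ is supposed to hand you such a circle for free, after which Subsection~\ref{ss.finishproof} runs as written. That is indeed the intended route, and your discussion of the branching subtleties in Lemma~\ref{lem-twopoints} is appropriate.

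There is, however, a real gap in the step you treat as routine. You assert that ``after replacing $f$ by an iterate we may assume $f(L)=L$, $f(F)=F$, and $f(\gamma)=\gamma$,'' but only $f(L)=L$ is justified by Lemma~\ref{l.verticallamination}\eqref{it-periodic}. That the \emph{specific} periodic boundary leaf $L$ meets $\Lambda^{cu}$ follows from minimality of $\Lambda$ (the leaves of $\Lambda$ meeting the closed $f$-invariant set $\Lambda^{cu}$ form a sublamination, hence all of $\Lambda$), which you should say. More seriously, nothing forces the vertical $cu$-leaf $F$ through a point of $L\cap\Lambda^{cu}$ to be periodic: $f(F)$ is another vertical $cu$-leaf through $L$, and in the branching setting there is no uniqueness to pin it to $F$. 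Without $\gamma$ fixed, the sets $\Wsp(\gamma)$ and $f^n(V)$ in Lemma~\ref{lemma:Yequals} are not $f$-invariant and the whole of \S\ref{ss.finishproof} does not start. This can plausibly be repaired --- for instance by feeding the single coherently oriented $c$-circle $\gamma$ into a graph-transform argument in the spirit of Lemma~\ref{l.graphtransf} (noting that the coherence-dependent cutting of Proposition~\ref{prop-circleexist} is now unnecessary), together with the forward-invariant annulus $X$ of Lemma~\ref{lema-anulus} to control the forward orbit --- but it is not automatic and should not be suppressed.
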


\section{Proof of Theorem \ref{teo-novert}}\label{sec-novertproof}

In this section we complete the proof of Theorem \ref{teo-novert}. This section is independent from section \ref{sec-coherent} except from subsection \ref{ss.absolute}. 

\subsection{Transitive case}\label{ss.trans}
In this subsection we establish Theorem \ref{teo-novert} under assumption (\ref{it-transitivity}). 

\begin{remark} 
As mentioned earlier, if $f$ is transitive (i.e. has a dense orbit) or is volume preserving, then it satisfies property (\ref{it-transitivity}). 
\end{remark}

\begin{prop}\label{p-transcase} If $f$ verifies property (\ref{it-transitivity}) of Theorem \ref{teo-novert} then $\Lambda^{cs}=\emptyset$. 
\end{prop}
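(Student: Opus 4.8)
The plan is to derive a contradiction from the assumption $\Lambda^{cs}\neq\emptyset$ using the chain recurrence hypothesis together with Lemma \ref{l.verticallamination}. By Proposition \ref{prop-Lambdacompactinv} the set $\Lambda^{cs}$ is compact, $f$-invariant, and a proper subset of $M$; pass to a minimal sublamination $\Lambda\subset\Lambda^{cs}$ (which exists by Zorn's lemma applied to closed, $f$-invariant, $cs$-saturated nonempty subsets). Item (\ref{it-periodic}) of Lemma \ref{l.verticallamination} gives a periodic connected component $U$ of $M\setminus\Lambda$ with periodic accessible boundary leaves, and item (\ref{it-unst}) gives an unstable arc inside $U$ whose forward iterates stay at distance at least $\eps$ from $\Lambda$ while growing arbitrarily long.

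The key observation is that chain recurrence is incompatible with the existence of such a periodic ``gap'' $U$. Concretely, after replacing $f$ by the power $f^k$ with $f^k(U)=U$ (which is still chain recurrent, since chain recurrence of $f$ on $M$ with $M$ connected passes to iterates here, or one can argue directly with $\eps$-chains), I would build a nonempty open set $W$ with $f(\overline W)\subset W$ and $W\neq M$. The natural candidate is a neighbourhood of $\Lambda$ of the form $W=B_\delta(\Lambda)$ for suitable small $\delta$: since $\Lambda$ is $f$-invariant and compact, and since the long unstable arcs inside $U$ keep distance $\ge\eps>\delta$ from $\Lambda$, the set $M\setminus W$ is nonempty and moreover its forward image cannot be swallowed — one shows instead that the complementary region, or rather a trapping region built from $U$, violates chain recurrence. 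More precisely, I would use the forward-invariant unstable arc to produce an open set $V\subset U$ with $f(\overline V)\subset V$: by item (\ref{it-unst}) there is an unstable arc $J\subset U$ all of whose forward iterates remain in $U$ at distance $\ge\eps$ from $\partial U$, so a thin tubular neighbourhood of $\bigcup_{n\ge 0}f^n(J)$ inside $U$, after shrinking, is forward-trapped. This $V$ is nonempty, open, satisfies $f(\overline V)\subset V$, and is contained in $U\subsetneq M$, contradicting (\ref{it-transitivity}).

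The step I expect to be the main obstacle is making the trapping region argument clean: one must ensure that a neighbourhood of the forward orbit of the unstable arc can genuinely be chosen forward-invariant and properly contained in $M$, which requires controlling how the unstable arcs spread transversally (using local product structure boxes of size $\eps$, as in the proof of item (\ref{it-unst})) and checking the iterates do not eventually approach $\Lambda$ or wrap around all of $U$ in an uncontrolled way. The local product structure gives uniform ``width'' $\eps$ around unstable plaques, so one can thicken $\bigcup_{n\ge N}f^n(J)$ to an open set whose closure maps inside itself; taking the saturation under $f$ and then an open neighbourhood with the trapping property is then routine. I would also remark that, alternatively, one can invoke the structure from Proposition \ref{prop-quasigeodesic} and Lemma \ref{l.verticallamination}(\ref{it-quasiisomleaves})–(\ref{it-polynomialarea}) to see directly that $U$ cannot be chain recurrent, but the trapping-region construction from the unstable arc is the most self-contained route.
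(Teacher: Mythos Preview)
Your argument has a genuine gap at exactly the point you flag as the main obstacle. Having an unstable arc $J$ whose forward iterates remain at distance $\ge\eps$ from $\Lambda$ gives you a compact forward-invariant set $K=\overline{\bigcup_{n\ge 0}f^n(J)}$ disjoint from $\Lambda$, but a compact forward-invariant set need not admit a trapping neighbourhood: that would make $K$ an attractor, and nothing you have said rules out stable manifolds entering any proposed tubular neighbourhood from outside. Local product structure only controls how unstable plaques sit locally; it says nothing about points on the stable side of $K$ being carried in by $f$. So ``thicken $\bigcup_{n\ge N}f^n(J)$ and shrink'' is not a proof, and I do not see a routine repair along these lines.

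The paper's proof is both shorter and uses less machinery: it does not pass to a minimal sublamination and does not invoke Lemma \ref{l.verticallamination} at all. Instead of hunting for a trapping region in the complement, it builds one \emph{around} $\Lambda^{cs}$ by saturating with short unstable arcs:
\[
U_\eps=\bigcup_{x\in\Lambda^{cs}}\Fu_\eps(x).
\]
Because $\Lambda^{cs}$ is $cs$-saturated, local product structure makes $U_\eps$ open; because $f^{-1}$ uniformly contracts unstable arcs, $f^{-1}(\overline{U_\eps})\subset U_\eps$. Chain recurrence (which is symmetric in $f$ and $f^{-1}$) then forces $U_\eps=M$ for every $\eps>0$, contradicting $\Lambda^{cs}\neq M$. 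Note that your first instinct, taking a $\delta$-neighbourhood $B_\delta(\Lambda)$, was close: the fix is to use the unstable-arc neighbourhood rather than the metric ball, which is precisely what makes the $f^{-1}$-trapping property automatic.
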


\begin{proof}
Proposition \ref{prop-Lambdacompactinv} implies that $\Lambda^{cs}$ is a compact $f$-invariant set which is not entirely $M$. Choose some small $\eps>0$ and consider the set

$$ U_\eps= \bigcup_{x \in \Lambda^{cs}} \Fu_{\eps}(x)  $$ 
Here, $\Fu_\eps(x)$ denotes the open $\eps$-neighborhood of $x$ in its strong unstable leaf with the induced metric. 

As $f^{-1}(\Fu_\eps(x)) \subset \Fu_{\lambda \eps}(f^{-1}(x))$ for some $\lambda<1$, it is enough to show that $U_\eps$ is open to obtain that $f^{-1}(\overline U_\eps) \subset U_\eps$. Since $f$ is chain-recurrent, this implies that if $\Lambda^{cs}\neq \emptyset$ then  $U_\eps =M$ for all $\eps>0$, but this is impossible since $\Lambda^{cs}\neq M$.  

To show that $U_\eps$ is open, recall that $\Lambda^{cs}$ is saturated by leaves of $\Fcs$ and therefore at each $x \in \Lambda^{cs}$ one can choose a small open disk $D$ inside a leaf of $\Fcs$ contained in $\Lambda^{cs}$ and containing $x$ and by local product structure one has that $\bigcup_{y \in D} \Fu_\eps(y)$ is an open set. 
\end{proof}

\subsection{Homotopic to identity case}\label{ss.homotId}

Here we establish item (\ref{it-isotopicid}) of Theorem \ref{teo-novert}. We consider $f:M\to M$ to be homotopic to the identity. Then, we can lift $f$ to the universal cover $\tilde M$ of $M$ to obtain a diffeomorphism $\tilde f: \tilde M \to \tilde M$ which is at bounded distance from the identity. 

Modulo taking an iterate, we can consider a fixed vertical $cs$-leaf $L$ (given by Lemma \ref{l.verticallamination}(\ref{it-periodic})) whose lift $\tilde L$ is quasi-isometrically embedded thanks to Lemma \ref{l.verticallamination} (\ref{it-quasiisomleaves}). It follows that for each strong stable arc $J \subset \tilde L$, the  diameter of $\tilde f^{-n}(J)$ remains bounded by a polynomial in $n$ (in fact, the growth is at most linear). On the other hand, as the area of a ball of radius $R$ in $\tilde L$ has area bounded by a polynomial in $R$ (cf. Lemma \ref{l.verticallamination} (\ref{it-polynomialarea})), one has that the area of the $\eps$-neighborhood of $\tilde f^{-n}(J)$ in $\tilde L$ is bounded by a polynomial in $n$.

This gives a contradiction with Proposition \ref{p-volvslength} as the length of $\tilde f^{-n}(J)$ is exponentially big with $n$.

\begin{remark}
This proof also holds if the induced action of $f$ in the base is reducible with no pseudo-Anosov component (see \cite[Chapter 1]{Calegari}). 
\end{remark}

\subsection{Homotopic to Pseudo-Anosov}\label{ss.homotPA}

We first show the following generalisation of the main result in \cite{BI}. 

\begin{prop}\label{p-volvslengthsmall}
Assume that $f: M \to M$ is a partially hyperbolic diffeomorphism of a 3-manifold $M$ and $U \subset M$ is an open invariant set such that the image of the morphism $\pi_1(U) \to \pi_1(M)$ induced by the inclusion of $U$ in $M$ is abelian. Assume moreover that there exists an unstable arc $I \subset U$ such that the iterates of $I$ remain at distance $\geq \eps$ from $\partial U$. Then, the action of $f_\ast$ on $\pi_1(U)$ has an eigenvalue larger than $1$. 
\end{prop}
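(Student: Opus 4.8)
The plan is to use the volume-versus-length estimate (Proposition \ref{p-volvslength}) applied inside $U$, combined with the polynomial volume growth forced by the abelianity of $\pi_1(U)$. Concretely, since the image of $\pi_1(U)$ in $\pi_1(M)$ is abelian and $\pi_1(M)$ (being the fundamental group of a circle bundle over a higher-genus surface, or more generally having exponential growth but abelian subgroups of polynomial, indeed linear or quadratic, growth) has the property that its abelian subgroups are virtually $\mathbb{Z}$ or $\mathbb{Z}^2$, the cover $\hat U$ of $M$ corresponding to the subgroup $\mathrm{im}(\pi_1(U)\to\pi_1(M))$ has polynomial volume growth. Lift $I$ and its iterates to this cover: the hypothesis that the iterates $f^n(I)$ stay at distance $\ge\eps$ from $\partial U$ means they lift to a single connected family of unstable arcs $\hat I_n$ in $\hat U$ inside a fixed cover, and Proposition \ref{p-volvslength} (which holds in the universal cover, hence in any cover by the same local argument) gives $\mathrm{volume}(B_\eps(\hat I_n)) \ge C\cdot\mathrm{length}(\hat I_n)$.

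Next I would quantify the two sides. The length of $f^n(I)$, hence of $\hat I_n$, grows at least like $\mu^n$ for some $\mu>1$ coming from uniform expansion along $\Eu$; if the claimed conclusion were false, then $f_\ast$ acting on the (abelianized, finitely generated) group $\pi_1(U)$ would have all eigenvalues of modulus $\le 1$, so iterating $f$ moves the lifted arcs $\hat I_n$ within a region of $\hat U$ whose volume grows only polynomially in $n$ — because a ball of radius $\sim n$ (the distance the endpoints can travel, which is controlled linearly since the generators of $\pi_1(U)$ are moved polynomially under an integer matrix with spectral radius $1$) in a polynomial-growth space has polynomially bounded volume, and the $\eps$-neighborhood $B_\eps(\hat I_n)$ is contained in such a ball. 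Comparing $C\mu^n \le \mathrm{length}(\hat I_n) \le \frac{1}{C}\mathrm{volume}(B_\eps(\hat I_n)) \le \mathrm{poly}(n)$ yields a contradiction for large $n$.

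The remaining issue is to pin down why, assuming all eigenvalues of $f_\ast|_{\pi_1(U)}$ have modulus $\le1$, the diameter of $B_\eps(\hat I_n)$ grows at most polynomially. The point is that $f$ is at bounded distance (in the appropriate cover) from a map realizing the action $f_\ast$ on $\pi_1(U)$; an integer matrix with all eigenvalues on or inside the unit circle has operator norm of its $n$-th power growing polynomially (Jordan blocks on the unit circle contribute a polynomial factor), so a base word of bounded length is sent by $f_\ast^n$ to a word of length $\le\mathrm{poly}(n)$, meaning translates of a fixed fundamental domain under $f^n$ stay within polynomial distance. Then the lifted arc $\hat I_n$, whose endpoints start at bounded distance and are displaced by at most $\mathrm{poly}(n)$, lies in a ball of radius $\mathrm{poly}(n)$ in $\hat U$; since $\hat U$ has polynomial volume growth (abelian, finitely generated deck group), that ball has polynomially bounded volume. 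This ``bounded-distance to the algebraic action controls geometric displacement'' step is the main obstacle, and it is where the abelianity hypothesis is essential — it is what makes the deck group of $\hat U$ (and hence the ambient geometry seen by the arcs) have polynomial growth and makes the matrix-norm bound meaningful.
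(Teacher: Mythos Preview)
Your overall strategy matches the paper's: derive a contradiction between the exponential length of the iterated unstable arc and a polynomial volume bound coming from Proposition~\ref{p-volvslength}. However, there is a genuine error in the choice of covering space. The cover $\hat U$ of $M$ corresponding to the abelian subgroup $H=\mathrm{im}(\pi_1(U)\to\pi_1(M))$ does \emph{not} have polynomial volume growth: the growth of $\hat U=\tilde M/H$ is governed by the coset space $\pi_1(M)/H$, not by $H$. Concretely, in the intended application $H$ is the center $\langle c\rangle\cong\mathbb Z$ of $\pi_1(M)$, and the corresponding cover is $\mathbb H^2\times S^1$, which has exponential growth. So the step ``$B_\eps(\hat I_n)$ sits in a ball of polynomial volume in $\hat U$'' is false in that cover, and the contradiction does not go through.

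The paper fixes this by working in the universal cover $\tilde M$ and taking a \emph{connected component} $\tilde U$ of the preimage of $U$, fixed by a lift $\tilde f$. The stabiliser of $\tilde U$ in $\pi_1(M)$ is exactly $H$, so $\tilde U$ is tiled by $H$-translates of a finite-volume region and hence $\mathrm{vol}(\tilde U\cap B_R)$ grows polynomially in $R$. This is also where the $\eps$-hypothesis earns its keep (in your write-up its role is unclear): it guarantees that the $\eps$-neighbourhood of each iterate $\tilde f^n(J)$ remains inside $\tilde U$, so that the polynomial volume bound actually applies to $B_\eps(\tilde f^n(J))$. The rest of your argument---the polynomial diameter bound from the spectral hypothesis on $f_\ast|_H$ and the final length-versus-volume contradiction---is correct and is exactly what the paper does, citing \cite[Section~2]{BI}.
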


\begin{proof}
Fix $\tilde U$, a connected component of the lift of $U$ to the universal cover $\tilde M$ of $M$ which is fixed by $\tilde f$ a lift of $f$. If $f_\ast$ does not have eigenvalues of modulus larger than one, it follows that the diameter of a compact set in $\tilde U$ when iterated forward by $\tilde f$ grows at most polynomially (see \cite[Section 2]{BI}). Moreover, the intersection of the ball of radius $R$ in $\tilde M$ with $\tilde U$ has volume which is polynomial in $R$ since the set of deck transformations which fix $\tilde U$ is an abelian subgroup of $\pi_1(M)$. 

Now, if one considers an unstable arc $J \subset \tilde U$  which projects to $I$ as in the statement of the proposition, its iterate $\tilde f^n(J)$ has length which is exponentially big in $n$. As the diameter of $\tilde f^n(J)$ is bounded by a polynomial in $n$ and its $\eps$-neighborhood is contained in $\tilde U$, one reaches a contradiction with Proposition \ref{p-volvslength}. 
\end{proof}

Notice now that if the vertical lamination is non-empty and $f$ acts as pseudo-Anosov in the base surface, then, the vertical lamination has to be \emph{full} (see \cite[Chapter 1]{Calegari} for a proof and more information on these concepts). In particular, the fundamental group of the complement is cyclic (and generated by the center of $\pi_1(M)$). Using Lemma \ref{l.verticallamination} (\ref{it-unst}) one is in the hypothesis of Proposition \ref{p-volvslengthsmall} which gives a contradiction as $f_\ast$ acts as the identity in the center of $\pi_1(M)$. This completes the proof of item (\ref{it-isotopicpA}) of Theorem \ref{teo-novert}.

\subsection{Absolute case}\label{ss.absolute}
In this section we establish item (\ref{it-absolute}) of Theorem \ref{teo-novert}. 

As before, we will assume that  $\Lambda^{cs}$ is non-empty and work in a periodic leaf $L$ (cf. Lemma \ref{l.verticallamination} item (\ref{it-periodic})) which we assume for simplicity that is fixed. We will say that a curve in $L$ is a $c$-curve if it is contained in the intersection of $L$ with a $cu$-leaf of the corresponding branching foliation. 

Using Proposition \ref{p.novertifcircle} one can see that inside $L$ any $c$-curve intersects any $s$-curve in at most one point (otherwise, one can proceed as in Proposition \ref{prop-circleexist} to obtain a circle $c$-leaf). By local product structure, this implies:

\begin{prop}\label{prop-vollengthcs}
There is $K_0>0$ and $\eps>0$ such that if $J$ is any $s$ or $c$-curve, then:

$$ \mathrm{length}(J) < K_0 \, \mathrm{area}(B_\eps(J)). $$
\end{prop}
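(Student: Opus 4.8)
Proposition \ref{prop-vollengthcs} asserts that $s$-curves and $c$-curves in $L$ satisfy a volume-vs-length estimate just like the one Proposition \ref{p-volvslength} guarantees for strong stable and unstable arcs. The plan is to derive this from the key structural fact, established just before the statement via Proposition \ref{p.novertifcircle}, that inside the periodic leaf $L$ any $c$-curve meets any $s$-curve in at most one point. This transversality-with-unique-intersection property is exactly what drives a local-product-structure counting argument.

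First I would fix a finite cover of a fundamental domain for the action of the cyclic group generated by $c$ on $\tilde L$ by local product structure boxes for the pair of (one-dimensional) foliations $\cF^s$ and $\cF^c$ on $L$; call $\eps_0$ a Lebesgue number for this cover, so that any $s$- or $c$-curve of length at most $\eps_0$ sits inside one such box. The heart of the matter is then: given an $s$-curve $J$ (the $c$-case is symmetric), I want to lower-bound $\mathrm{area}(B_\eps(J))$ in terms of $\mathrm{length}(J)$. Chop $J$ into roughly $\mathrm{length}(J)/\eps_0$ subarcs $J_1,\dots,J_m$, each of length comparable to $\eps_0$ and each contained in a product box. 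On each box, the transverse foliation $\cF^c$ sweeps out a product neighbourhood, so $B_\eps(J_i)\cap L$ contains a definite area, bounded below by a constant $a_0>0$ depending only on $\eps$ and the geometry of the boxes. The issue is overcounting: the sets $B_\eps(J_i)$ may overlap, and a priori a single region of $L$ could be charged by many different $J_i$. This is where the "at most one intersection point" property enters — it implies that the $c$-holonomy transverse to $J$ is injective, so that the product charts patch together coherently along all of $J$, and a point of $L$ within distance $\eps$ of $J$ lies within distance $\eps$ of only a bounded number (depending on $\eps/\eps_0$) of the pieces $J_i$. Hence $\mathrm{area}(B_\eps(J)) \geq (a_0/D)\, m \geq C\,\mathrm{length}(J)$ for suitable constants, which rearranges to the claimed inequality with $K_0 = 1/C$.

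To be careful about the global step: one genuinely needs that a long $s$-curve does not "fold back" onto itself within an $\eps$-ball, and the unique-intersection property is precisely the obstruction to such folding — if $B_\eps(J)$ were traversed boundedly-often in a controlled way it is because the $c$-leaves through $J$ never return to $J$. I would phrase the overlap bound by considering the holonomy map from a short transversal $c$-arc to $L$ along the stable foliation and using that $\Eu$ (or here $\Ec\oplus$ the transverse structure) is uniformly transverse to $L$, exactly as in Lemma \ref{lemma:holonomy}. Since all fibers have uniformly bounded length and $L$ has polynomial (quadratic) area growth by Lemma \ref{l.verticallamination}(\ref{it-polynomialarea}), there are no issues at infinity, and the constants $K_0,\eps$ can be taken uniform.

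The main obstacle I expect is the overlap/overcounting bookkeeping in the global argument — making precise that within a single $\eps$-ball the curve $J$ is "seen" only a bounded number of times, so that summing the local area contributions is legitimate. Everything else (the per-box lower bound on swept area, the subdivision, the rearrangement into the stated inequality) is routine once the unique-intersection property is in hand. One clean way to package the overlap bound: fix $\eps \le \eps_0/10$, and observe that if a point $w\in L$ satisfies $d(w,J_i)<\eps$ and $d(w,J_j)<\eps$ for $|i-j|$ large, then concatenating a short path from $w$ to $J_i$, a sub-arc of $J$, and a short path back to $w$ produces, after projecting along the product structure, two $c$-curves through nearby points of $J$ meeting $J$ twice — contradicting uniqueness. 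This forces $|i-j|$ to be bounded, giving the constant $D$.
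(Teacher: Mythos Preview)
Your proposal is correct and follows the same line as the paper. The paper's own argument is essentially a one-liner: it records that, by Proposition~\ref{p.novertifcircle}, any $c$-curve meets any $s$-curve in $L$ in at most one point, and then simply says ``by local product structure, this implies'' the proposition. Your write-up is the natural unpacking of that sentence --- chopping $J$ into box-sized pieces, assigning each a definite area contribution, and controlling overlap via the observation that two far-apart subarcs of $J$ landing in the same product box would force a transverse ($c$- or $s$-) plaque to hit $J$ twice. One small remark: since the absolute case does not assume dynamical coherence, the ``$c$-foliation'' on $L$ may branch, so speaking of product boxes ``for the pair of foliations $\cF^s$ and $\cF^c$'' is slightly imprecise; but the argument only needs that through each point of a box there passes a $c$-curve transverse to the $s$-plaques, which holds regardless, so the reasoning goes through unchanged.
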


As $L$ is vertical we can isotope the fibers so that $L$ is a union of fibers (Theorem \ref{teo-brit}) and without loss of generality assume each fiber has length bounded by $1$. For $x,y \in L$, let $C_x$ denote the fiber through $x$ and $d(x,y)$ the distance measured inside $L$. The above proposition gives:

\begin{cor}\label{cor-vollen}
There is $K>0$ with the following property. If $x,y \in L$ with $d(x,y)>2$, $A_{x,y}$ is the closed annulus between $C_x$ and $C_y$, and $J$ is a $c$ or $s$-curve in $A_{x,y}$ joining $C_x$ to $C_y$, then $\mathrm{length}(J) \leq K d(x,y)$. 
\end{cor}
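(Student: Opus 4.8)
The plan is to read the statement off Proposition~\ref{prop-vollengthcs}. That proposition gives $\mathrm{length}(J)<K_0\,\mathrm{area}(B_\eps(J))$, so it is enough to bound $\mathrm{area}(B_\eps(J))$ by a fixed multiple of $d(x,y)$; one then takes $K=2K_0V_0$ for the constant $V_0$ appearing below, the factor $2$ and the hypothesis $d(x,y)>2$ being used only to absorb an additive term. Here $B_\eps(J)$, together with all distances and areas inside $L$, are taken with respect to the metric $L$ inherits from $M$, and the hypotheses on $J$ enter only through $J\subset A_{x,y}$ and, via Proposition~\ref{prop-vollengthcs}, through $J$ being an $s$- or $c$-curve.

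The geometric heart is the claim that $J$ stays within distance $\tfrac12$, inside $L$, of a shortest path from $x$ to $y$. Let $\pi\colon L\to\bbR$ be a continuous map that is constant on each fiber and satisfies $A_{x,y}=\pi\inv\bigl([\pi(x),\pi(y)]\bigr)$ (taking $\pi(x)\le\pi(y)$). Since leaves of the branching foliation are complete, I would fix a minimising geodesic $\sigma$ in $L$ from $x$ to $y$, so $\mathrm{length}(\sigma)=d(x,y)$. Applying the intermediate value theorem to $\pi\circ\sigma$ shows that $\sigma$ meets every fiber contained in $A_{x,y}$. Now any $w\in J\subset A_{x,y}$ lies on one such fiber, a circle of length at most $1$ that also meets $\sigma$; hence $w$ is within $\tfrac12$ of $\sigma$ inside $L$. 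Therefore $J\subset B_{1/2}(\sigma)$, and for $\eps$ small, $B_\eps(J)\subset B_1(\sigma)$.

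It then remains to bound $\mathrm{area}(B_1(\sigma))$ by $V_0\bigl(d(x,y)+1\bigr)$. Cutting $\sigma$ into at most $d(x,y)+1$ consecutive subarcs of length $\le1$, the $1$-neighbourhood in $L$ of each lies in a ball of radius $2$ in $L$ about its midpoint, so the bound reduces to a uniform upper bound $V_0$ on the area of radius-$2$ balls in the leaves of $\Fcs$. This is the standard bounded-geometry property: for the approximating foliations $\cW_\eps$ it follows from a finite cover of $M$ by foliation boxes together with Reeblessness (Fact~\ref{fact-reebless}), used to control how often a bounded piece of leaf re-enters a box, and it passes to $\Fcs$ since, by Theorem~\ref{teoBI2} and Fact~\ref{fact-diffeo}, $h_\eps$ maps each leaf of $\cW_\eps$ onto a leaf of $\Fcs$ by a $C^1$ map of bounded distortion lying $\eps$-close to the identity. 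Combining the inclusions gives $\mathrm{length}(J)<K_0V_0\bigl(d(x,y)+1\bigr)\le K\,d(x,y)$. I expect this uniform area bound for fixed-radius leaf balls to be the only step requiring more than bookkeeping; note, though, that it evaporates if one reads $B_\eps(J)$ in Proposition~\ref{prop-vollengthcs} as a subset of $M$, since then compactness of $M$ already bounds the volume of balls of radius $2$.
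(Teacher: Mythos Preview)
Your argument is correct and is essentially what the paper leaves implicit (the paper writes only ``The above proposition gives:'' before stating the corollary). The chain
\[
B_\eps(J)\subset B_\eps(A_{x,y})\subset B_{1}(\sigma),\qquad
\mathrm{area}(B_1(\sigma))\le V_0\bigl(d(x,y)+1\bigr)\le 2V_0\,d(x,y),
\]
together with Proposition~\ref{prop-vollengthcs}, is exactly the intended computation, and your use of the intermediate value theorem on $\pi\circ\sigma$ to see that every fiber in $A_{x,y}$ meets $\sigma$ is the clean way to justify $A_{x,y}\subset B_{1/2}(\sigma)$.

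One small correction: Reeblessness plays no role in the uniform bound $V_0$ on the area of radius-$2$ balls in leaves. A Reebless foliation can have dense leaves that re-enter a foliation box arbitrarily often, so Reeblessness does not ``control how often a bounded piece of leaf re-enters a box.'' The bound you want is a direct consequence of compactness of $M$ and continuity of $E^{cs}$: there is $r_0>0$ and $C>1$ such that every leaf ball of radius $r_0$ is $C$-bi-Lipschitz to a Euclidean disc, and then a standard covering/packing argument gives a uniform (in fact at worst exponential) bound on $\mathrm{area}(B_L(p,R))$ for each fixed $R$. You also do not need to detour through $\cW_\eps$ and $h_\eps$: the leaf $L$ itself is a $C^1$ immersed surface tangent to the continuous plane field $E^{cs}$ in the compact manifold $M$, and that already yields the bounded-geometry statement directly. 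Your final parenthetical remark is on point: the only nontrivial ingredient beyond Proposition~\ref{prop-vollengthcs} is this uniform leafwise area bound, and it is indeed a routine compactness fact rather than something requiring the taut/Reebless structure.
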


Absolute partial hyperbolicity implies the existence of $\sigma<\mu$ such that 

$$ \|Df|_{\Es}\| < \sigma< \|Df|_{\Ec}\| <\mu, \  \ \forall x \in L. $$

Moreover, from compactness, one has that $\|Df\| > \lambda>0$ for all $x\in L$. 

As there are no circle $s$-leaves and Proposition \ref{p.novertifcircle} implies there are no circle $c$-leaves one gets by a Poincar\'e-Bendixson argument that given two fibers there are $s$ and $c$ curves between them. 

Let $D>0$ be a very big constant (to be chosen at the end) and take points $x,y \in L$ with $d(x,y)=D$. Let $J^s$ be a stable curve in $A_{x,y}$ joining $C_x$ and $C_y$ with endpoints $x^s,y^s$, One deduces from Corollary \ref{cor-vollen} that $\mathrm{length}(J^s)  \leq KD$ and that $\mathrm{length}(f^n(J^s)) \leq K\sigma^n D$ for some $n\geq 1$ (to be specified at the end). 

Consider two fibers through the endpoints of $f^n(J^s)$ and choose a $c$-curve $J^c$ joining those fibers with endpoints $x^c,y^c$ inside the region delimited by them. Using Corollary \ref{cor-vollen} again one gets 
\[
 \mathrm{length}(J^c) \leq K \, \mathrm{length}(f^n(J^s)) \leq K^2 \sigma^n D. 
\]
Then, it follows that:
\[
 \mathrm{length}(f^{-n}(J^c)) \leq \frac{K^2 \sigma^n}{\mu^n} D. 
\]
As $f^n(x^s)$ lies in the same fiber as $x^c$ one gets:
\[
 d(x^s , f^{-n}(x^c)) \leq \lambda^{-n}, 
\]
\noindent and an identical estimate for $d(y^s,f^{-n}(y^c))$. The triangle inequality provides: 

$$ D-2 \leq d(x^s,y^s) \leq \frac{K^2 \sigma^n}{\mu^n} D + 2 \lambda^{-n}. $$

Choose $n>0$ so that $\frac{K^2 \sigma^n}{\mu^n} < \frac{1}{2}$.
Then choose $D$ large enough to produce a contradiction.

\section{Circle bundles: Proof of Theorems A and B}\label{s.proofCircle}

Admitting a horizontal foliation already imposes topological restrictions on a circle bundle in the form of the Milnor-Wood inequality \cite[Book II, Chapter 4]{CandelConlon}.
In this section we exploit that the $cs$-foliation is not only horizontal, but
also admits a non-zero vector field tangent to it.
This imposes even stronger obstructions on the topology of the circle bundle,
and implies that it finitely covers the unit tangent bundle of a surface.

\begin{remark}\label{rem-jonathanbowden}
Notice that there exist foliations on circle bundles for which some leaves are vertical but for which their tangent distributions are \emph{homotopic} to horizontal ones. As our arguments are from differential topology, the obstructions carry on to this setting, so it would be enough to show that for a general partially hyperbolic diffeomorphism, the center-stable (resp. center unstable) bundle is homotopic to a horizontal one to obtain the obstructions here. %
\end{remark}

In this section we state our main result for the case of an orientable circle bundle over an orientable surface (see subsection \ref{ss.circlebundles}). Throughout this section, $\Sigma$ will denote a closed and orientable surface of genus greater than one. We say that the bundle $M\rightarrow\Sigma$ is orientable if $M$ is orientable as a 3-manifold. We first give a proof in the case $M=\Sigma \times S^1$ and then continue with the proof of the general case. The proof is organized as follows: in subsection \ref{ss.hurwitz} we show an abstract result giving an obstruction for admitting horizontal vector fields in a circle bundle. Then in subsection \ref{ss.ABorientable}, we use the obstruction to show Theorems A and B assuming that the bundles of the partially hyperbolic splitting are orientable. Finally in subsection \ref{ss.nonorientable}, we remove this hypothesis.%

\subsection{The trivial bundle case}

Here we deal with the following case which admits a simple proof. Even if it also follows from the more general setting we felt it deserved to be explained first, as the general idea is in some sense a generalisation of this case. 

\begin{teo}\label{teo-product}
For any surface $\Sigma$ of genus $g\geq 2$ the manifold $M=\Sigma \times S^1$ does not admit a partially hyperbolic diffeomorphism without vertical leaves (cf. Theorem \ref{teo-novert}). 
\end{teo}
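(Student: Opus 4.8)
The plan is to derive a contradiction by combining the hypothesis (no vertical leaves, so by Theorem \ref{teo-novert} and Corollary \ref{cor-novert} the branching foliation $\Fcs$ is, up to isotopy, transverse to the circle fibers) with the fact that $M = \Sigma \times S^1$ has Euler number zero. First I would invoke Theorem \ref{teoHHU} and the remarks in subsection \ref{ss.circlebundles} to rule out torus leaves in $\Fcs$, so that Theorem \ref{teo-brit} applies and (together with the no-vertical-leaves assumption provided by Theorem \ref{teo-novert}) gives an approximating foliation $\cW_\eps$ of $\Fcs$ all of whose leaves are horizontal, i.e. transverse to the fibers of a projection $p_\eps$ isotopic to $p$ (this is exactly Corollary \ref{cor-novert}). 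Thus $M = \Sigma \times S^1$ carries a horizontal foliation, and moreover its tangent plane field is a horizontal plane field carrying a nowhere-zero line field (the center direction $\Ec$, or the intersection of $\Fcs$ with $\Fcu$).

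The second, and main, step is the topological obstruction. A horizontal plane distribution $D$ on a circle bundle $p\colon M \to \Sigma$ projects, at each point, isomorphically onto $T\Sigma$; hence the restriction of $D$ to a fiber $\{x\}\times S^1$ gives a loop of planes in $TM$, and transversality to the vertical direction means that each such plane contains a distinguished horizontal complement. Concretely, choosing a horizontal distribution identifies $D$ over the fiber with a section of a bundle whose fiberwise structure group is rotations of the base plane by the holonomy; the standard computation (the Milnor--Wood type argument, cf. \cite[Book II, Chapter 4]{CandelConlon}) shows that the existence of a horizontal foliation forces $|\mathrm{eu}(M)| \le |\chi(\Sigma)|$. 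This alone does not kill $\Sigma \times S^1$ since $\mathrm{eu}(\Sigma\times S^1)=0 \le |\chi(\Sigma)|$. The extra input is that $\Fcs$ (equivalently $\cW_\eps$) carries a continuous nowhere-zero tangent line field $\Ec$: projecting this line field by $dp_\eps$ yields a continuous nowhere-zero line field on the base surface $\Sigma$. But $\Sigma$ has genus $g\ge 2$, so $\chi(\Sigma) = 2-2g < 0$, and the Poincar\'e--Hopf theorem forbids a nowhere-zero line field on $\Sigma$ — contradiction.

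I would make the line-field argument precise as follows. The leaf $L$ of $\cW_\eps$ through a point is transverse to every fiber it meets, so $dp_\eps$ restricted to $TL$ is a fiberwise isomorphism onto $p_\eps^*T\Sigma$; since $\Ec \subset \Ecs$ is one-dimensional, continuous, and nowhere zero, its image $dp_\eps(\Ec)$ is a continuous nowhere-zero line field on $M$ that is constant along fibers (because two points on the same fiber lie on leaves whose tangent planes both project isomorphically, but one must check compatibility — alternatively, pick a single horizontal section of $p_\eps$, that is a surface $S\subset M$ transverse to the fibers and mapped homeomorphically to $\Sigma$ by $p_\eps$, which exists precisely because the bundle is trivial, $M=\Sigma\times S^1$). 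Restricting $\Ec$ to such a section $S \cong \Sigma$ and pushing forward by $dp_\eps$ yields a continuous line field on $\Sigma$ without singularities, which is impossible since $\chi(\Sigma)\ne 0$.

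The step I expect to be the main obstacle is making rigorous the passage from "the center direction $\Ec$ of the branching foliation" to "a genuine continuous nowhere-zero line field that can be pushed down to $\Sigma$": the branching foliation is only $C^0$ in its transverse structure, and one must be careful that the approximating foliation $\cW_\eps$ from Theorem \ref{teoBI2} inherits (an approximation of) a nowhere-zero tangent line field, and that this line field is genuinely transverse enough to the fibers that $dp_\eps$ does not annihilate it. For the trivial bundle $M=\Sigma\times S^1$ this is cleanest because one can simply use the product projection to the $\Sigma$ factor and the obstruction is detected by $\chi(\Sigma)$ directly; the subtlety of tracking the Euler number through a non-product regluing is exactly what is deferred to the general case in subsections \ref{ss.hurwitz}--\ref{ss.nonorientable}.
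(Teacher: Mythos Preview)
Your approach is essentially the same as the paper's: pass to the approximating horizontal foliation via Corollary~\ref{cor-novert}, use that the trivial bundle has a global section $S\cong\Sigma$, push a nowhere-zero field tangent to the foliation down to $S$, and contradict Poincar\'e--Hopf since $\chi(\Sigma)\neq 0$.

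The one substantive difference is exactly the point you flag as the main obstacle, and the paper resolves it differently from what you sketch. You propose to project $\Ec$, but $\Ec$ is tangent to the leaves of the \emph{branching} foliation $\Fcs$, not to the leaves of the approximating foliation $\cW_\eps$ (whose tangent distribution is only $\eps$-close to $\Ecs$); and it is $\cW_\eps$, not $\Fcs$, that Corollary~\ref{cor-novert} makes transverse to the fibers of $p_\eps$. So there is no a~priori reason $\Ec$ (or $\Es$) is itself transverse to those fibers. The paper's fix is to take a vector field $X^s$ generating $\Es$ --- which \emph{is} tangent to each leaf of $\Fcs$ --- and pull it back leafwise via the diffeomorphisms $h_\eps|_{\text{leaf}}$ of Fact~\ref{fact-diffeo} to obtain a continuous nowhere-zero field $\hat X^s$ genuinely tangent to $\cW_\eps$. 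Since $\cW_\eps$ is transverse to the fibers, $\hat X^s$ is too, and its restriction to the section $S=\Sigma\times\{t\}$ projects to a nowhere-zero vector field on $\Sigma$. (The paper also first reduces to the orientable case by a finite cover, so one works with a vector field rather than a line field throughout.) Using $\Es$ rather than $\Ec$ is not essential --- either would do once pulled back by $h_\eps$ --- but the pullback step is the missing ingredient in your sketch.
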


\begin{figure}[ht]
\vspace{-0.5cm}
\begin{center}
\includegraphics[scale=0.4]{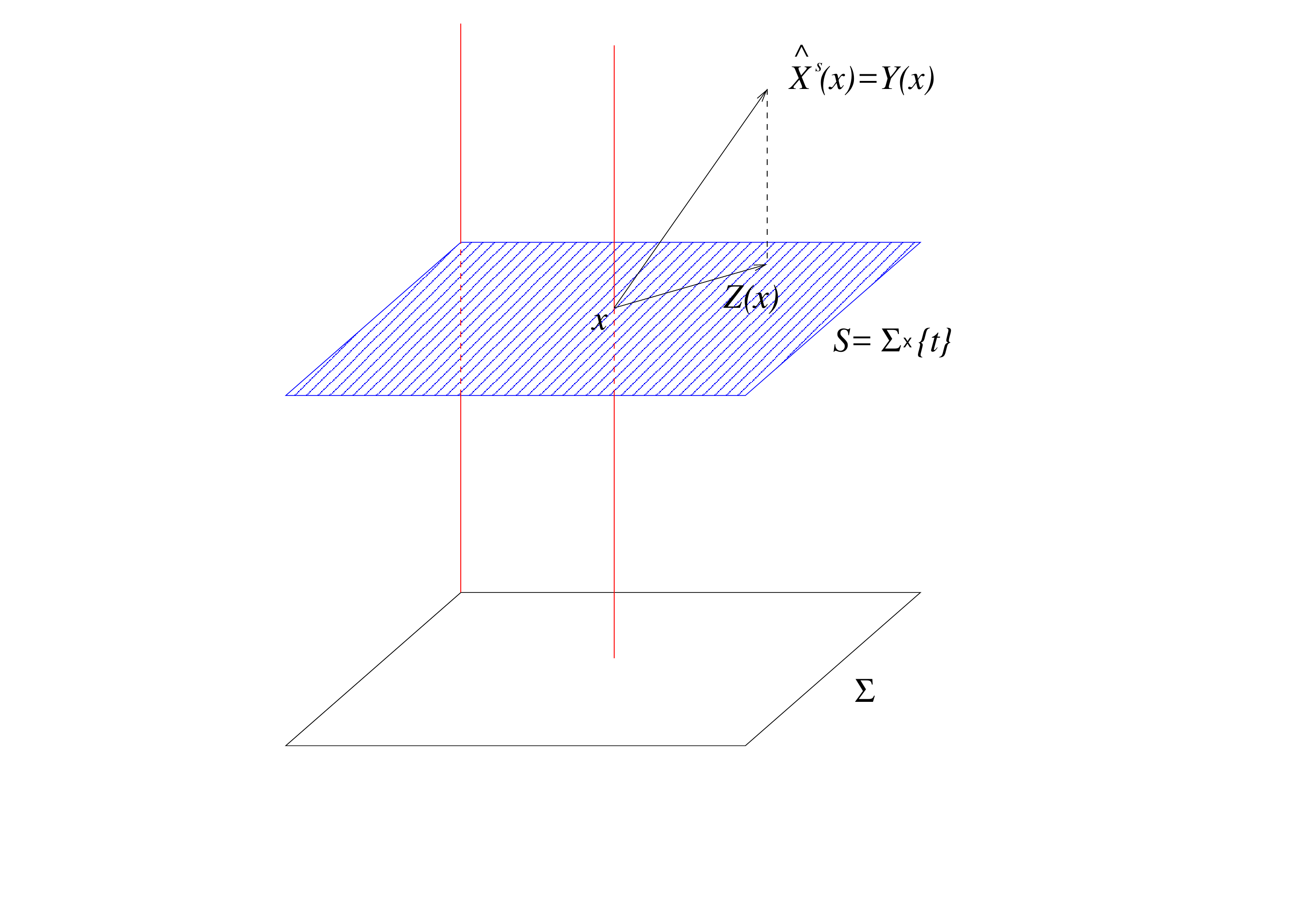}
\begin{picture}(0,0)
\end{picture}
\end{center}
\vspace{-0.5cm}
\caption{Constructing a vector field in a section of the bundle.\label{f.trivialbundle}}
\end{figure}

\begin{proof} We assume by contradiction that there exists such a partially hyperbolic diffeomorphism.  We can assume without loss of generality that all bundles are orientable and the orientation is preserved by $f$ (otherwise, one takes a finite cover and an iterate). As in Corollary \ref{cor-novert}, we know that there is a projection $p_\eps$ isotopic to $p$ so that every leaf of the approximating foliation $\cW_\eps$ is transverse to the fibers. 

Consider a non-singular vector field $X^s$ generating the $E^s$-bundle.
On each leaf of the approximating foliation $\cW_\eps$,
the map $h_\eps$ allows one to pull back the vector field
$X^s$ to a vector field $\hat X^s$ (cf. Fact \ref{fact-diffeo}).
The result is a continuous vector field $\hat X^s$ tangent to $\cW_\eps$.

Choose a surface $S=\Sigma\times \{t\}$ in $M$ embedded orthogonally to all the fibers of $p_\eps$ (i.e. $S$ is a section of the bundle $p_\eps:M\to\Sigma$), and let's call $Y$ to the restriction of the vector field $\hat{X}^s$ to $S$. Being horizontal, the projection of $Y$ onto $TS$ provides a non-vanishing vector field $Z$, contradicting that $S$ is a higher genus surface. This completes the proof. 

\end{proof}

\subsection{Classifying circle bundles via the Euler number}\label{ss.hurwitz}

Recall from subsection \ref{ss.circlebundles} that the fundamental group of a circle bundle $M$ over a genus $g$ ($g \geq 2$) surface $\Sigma$ admits the following presentation, where $\mathrm{eu}(M)$ is the \emph{Euler number} of the bundle:

\[ \pi_1(M) = \bigg\langle a_1,b_1, \ldots, a_g, b_g, c \  | \  \prod_{i=1}^g [a_i,b_i] = c^{\mathrm{eu}(M)} \ , \ [a_i,c]= \mathrm{id} \ , \ [b_i,c]=\mathrm{id} \bigg \rangle.  \]  %

It turns out that the topology of the 3-manifold $M$ is determined by the integer $\mathrm{eu}(M)$ 
according to the following proposition (see \cite{Hatcher}). 

\begin{prop}
Let $\Sigma$ be a closed and orientable surface (of any genus). Then
\begin{enumerate}
\item For every $n\in\mathbb{Z}$ there exists a circle bundle $M\rightarrow\Sigma$ such that $\mathrm{eu}(M)=n$;
\item Two circle bundles over $\Sigma$ are homeomorphic under an orientation preserving homeomorphism if and only if both have equal Euler number.
\item If $M$ is an oriented circle bundle over $\Sigma$ and $-M$ denotes the same manifold but with the opposite orientation, then $\mathrm{eu}(-M)=-\mathrm{eu}(M)$.  
\end{enumerate} 
\end{prop}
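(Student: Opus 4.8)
The statement is a standard classification result for orientable circle bundles over a fixed closed orientable surface $\Sigma$, expressed in terms of the Euler number, so the plan is to establish each of the three items using the presentation of $\pi_1(M)$ recalled just above together with elementary obstruction-theoretic facts about circle bundles. Throughout, I identify an oriented circle bundle with an element of $H^2(\Sigma;\bbZ) \cong \bbZ$, the isomorphism being given by the Euler class, and I use that the Euler number $\mathrm{eu}(M)$ is by definition the integer corresponding to this class under $H^2(\Sigma;\bbZ)\cong\bbZ$.

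\textbf{Item (1): realization.} For every $n\in\bbZ$ I would exhibit a bundle with Euler number $n$ directly. Take $\Sigma\times S^1$, remove a fibered solid torus $D\times S^1$ over an embedded disk $D\subset\Sigma$, and reglue by the diffeomorphism of $\partial D\times S^1 = T^2$ that sends the meridian $\partial D\times\{t\}$ to a curve winding $n$ times around the $S^1$ factor and once around $\partial D$ (i.e. the map induced by $\left(\begin{smallmatrix}1&0\\n&1\end{smallmatrix}\right)$ on $H_1(T^2)$, which preserves the fiber class $\{pt\}\times S^1$). The result is again a circle bundle over $\Sigma$, and by the very definition of the Euler number given in subsection \ref{ss.circlebundles} (the number of turns a meridian makes around a fiber under the regluing map) this bundle has $\mathrm{eu} = n$. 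Alternatively, and perhaps more cleanly, one invokes that isomorphism classes of oriented $S^1$-bundles over $\Sigma$ are classified by $H^2(\Sigma;\bbZ)$ via the Euler class, and $H^2(\Sigma;\bbZ)\cong\bbZ$ is generated by the fundamental class, so every integer is attained; but since the excerpt defines $\mathrm{eu}$ via the clutching construction, the explicit regluing argument is the more self-contained route.

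\textbf{Item (2): the Euler number is a complete invariant.} Here the key point is that an orientation-preserving homeomorphism $\Phi: M\to M'$ of two circle bundles over $\Sigma$ necessarily carries the fiber subgroup to the fiber subgroup. This follows from the algebra: the center of $\pi_1(M)$ is infinite cyclic, generated by the fiber class $c$ (as noted in subsection \ref{ss.circlebundles}), and likewise for $M'$, so $\Phi_*$ sends $c$ to $c'^{\pm1}$; orientation considerations pin down the sign. Consequently $\Phi$ descends to a homeomorphism of the base $\Sigma$, and the two bundles are isomorphic as oriented $S^1$-bundles, hence have the same Euler class. For the converse, two bundles with the same Euler number are isomorphic by construction (or by the $H^2$-classification), and any bundle isomorphism is an orientation-preserving homeomorphism of the total spaces. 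The main obstacle in this item is the topological input that every self-homeomorphism of the total space is, up to isotopy, fiber-preserving over genus $\geq 2$; but this is exactly the content of the uniqueness part of the Seifert fibration for such manifolds (the circle fibration is the unique Seifert fibration since $\pi_1(\Sigma)$ is not virtually cyclic), and for a fixed $\Sigma$ one can also argue purely group-theoretically from the presentation since the relator $\prod[a_i,b_i]=c^{\mathrm{eu}}$ forces $\mathrm{eu}$ to be an invariant of the pair $(\pi_1(M), c)$.

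\textbf{Item (3): orientation reversal.} This follows from the naturality of the Euler class under orientation change. Reversing the orientation of $M$ either reverses the orientation of the fiber $S^1$ and keeps the base orientation, or keeps the fiber and reverses the base; in either case the Euler class, which is bilinear in the two orientations, changes sign, so $\mathrm{eu}(-M) = -\mathrm{eu}(M)$. At the level of the clutching construction this is transparent: flipping the sign of the fiber coordinate conjugates the regluing matrix $\left(\begin{smallmatrix}1&0\\n&1\end{smallmatrix}\right)$ to $\left(\begin{smallmatrix}1&0\\-n&1\end{smallmatrix}\right)$. I expect items (1) and (3) to be routine; the only genuine content, and the step I would spend the most care on, is the rigidity statement underlying item (2) — that the circle fibration is canonical up to isotopy — which for $g\geq 2$ rests on the center of $\pi_1(M)$ being exactly the fiber subgroup, a fact already recorded in subsection \ref{ss.circlebundles}.
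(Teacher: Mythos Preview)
The paper does not prove this proposition at all: it simply states the result and refers the reader to Hatcher's notes \cite{Hatcher}. So your proposal already goes well beyond what the paper offers, and there is no ``paper's own proof'' to compare against beyond that citation.

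Your outline is essentially correct and would serve as a proof, with one caveat. The proposition is stated for $\Sigma$ of \emph{any} genus, but your argument for item (2) --- that an orientation-preserving homeomorphism of total spaces must respect the fibering because the fiber generates the center of $\pi_1(M)$ --- only works as written for $g\geq 2$. For $g=1$ the center of $\pi_1(M)$ is strictly larger than the fiber subgroup (the whole group is nilpotent, or abelian when $\mathrm{eu}=0$), and for $g=0$ the fundamental group is finite cyclic, so the center argument says nothing useful. In those cases one must either invoke the uniqueness of Seifert fiberings in a different form or argue directly (lens spaces $L(n,1)$ for $g=0$, nilmanifolds for $g=1$). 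You flag this yourself at the end, but since the statement explicitly covers all genera you should either handle those cases or restrict to $g\geq 2$, which is all the paper actually needs. Also note that the $H^2$-classification you mention classifies bundles up to bundle isomorphism, not total spaces up to homeomorphism, so it does not by itself close the gap for item (2); you still need the rigidity of the fibration.
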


As a corollary we get that the homeomorphism class of circle bundles over $\Sigma$ can be classified by the positive integer $|\mathrm{eu}(M)|$. It holds that the Euler number of the unitary tangent bundle of $\Sigma$ equals the Euler characteristic of $\Sigma$, i.e.,
$|\mathrm{eu}(T^1\Sigma)|=|\chi(\Sigma)|$, and also $\mathrm{eu}(M\times S^1)=0$. 

The next proposition can be interpreted as a generalization of the Hurwitz formula for covering maps between surfaces which states that if $h:\hat \Sigma \rightarrow \Sigma$ is a covering map, then $\chi(\hat \Sigma)=|\mathrm{deg}(h)|\cdot\chi(\Sigma)$. 

Let $\hat M \rightarrow \hat \Sigma$ and $M \rightarrow \Sigma$ be two circle bundles and $H:\hat M\rightarrow M$ a fiber preserving map. Observe that by taking the quotient of $\hat M$ and $M$ by its fibers, this map gives rise to another map $h: \hat \Sigma\rightarrow\Sigma$ such that the following diagram commutes:
\begin{center}
$\begin{CD}
\hat M     @>H>>  M\\
@VVV        @VVV\\
\hat \Sigma     @>h>>  \Sigma
\end{CD}$
\end{center}
Observe also that if a fiber $\hat S$ of $\hat M$ is sent into a fiber $S$ of $M$ by $H$, then the degree of $H$ restricted to $\hat S$ is constant along all the fibers of $\hat M$. We will restrict to the case where $h=\mathrm{id}$, though the formula can be easily generalised to more general settings. Notice that in this case, the degree of $H$ coincides with the degree in the fibers. 

\begin{prop}\label{prop-Hurwitz}
In the previous setting with $h=\mathrm{id}$, it holds that 
\[\mathrm{deg}(H) \mathrm{eu}( \hat M)=\mathrm{eu}(M).\]
\end{prop}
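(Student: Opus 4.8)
The plan is to compute the Euler number using its interpretation as an obstruction class, namely via a section over the $1$-skeleton and the count of turns of the trivialization across a single $2$-cell. First I would set up the common cell structure: since $h=\id$, pick a handle decomposition (or CW structure) of $\Sigma$ with a single $2$-cell $e$, whose attaching map is the relator $\prod_{i=1}^g[a_i,b_i]$, and use it simultaneously for $\hat M\to\Sigma$ and $M\to\Sigma$. Over the $1$-skeleton $\Sigma^{(1)}$ both bundles are trivial, so choose trivializations and a section $s:\Sigma^{(1)}\to M$; by definition $\mathrm{eu}(M)$ is the integer measuring how many times the chosen trivialization of $M$ over $\partial e$ winds relative to the one extending over $e$ — concretely, $s|_{\partial e}$ traces a loop in $\partial e\times S^1$ whose homotopy class is $(\text{boundary of }e)\cdot c^{\mathrm{eu}(M)}$ in $\pi_1$, exactly the relator $\prod[a_i,b_i]=c^{\mathrm{eu}(M)}$ from the presentation.

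The key step is then to track what $H$ does. Since $H$ is fiber-preserving and covers $\id$, it restricts on each fiber to a degree-$\mathrm{deg}(H)$ self-map of the circle, and this degree is locally (hence globally) constant by connectedness of $\hat\Sigma$, as noted in the excerpt. Over $\Sigma^{(1)}$ we may choose the trivializations of $\hat M$ and of $M$ compatibly with $H$, so that in local coordinates $H(x,\theta)=(x,\mathrm{deg}(H)\cdot\theta)$. Now push the section: $H\circ\hat s$ is a section of $M$ over $\Sigma^{(1)}$, and comparing it with $s$ over $\partial e$, the winding of $\hat s|_{\partial e}$, which equals $\mathrm{eu}(\hat M)$ times the fiber generator $\hat c$, is sent by $H$ to $\mathrm{deg}(H)\cdot\mathrm{eu}(\hat M)$ times the fiber generator $c$ of $M$. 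But $H\circ\hat s$ and $s$ differ over $\partial e$ by an element of $\pi_1$ that bounds in $e$ (both are sections over the contractible-after-filling cell structure on the $1$-skeleton's complement), so the winding of $H\circ\hat s|_{\partial e}$ must also equal $\mathrm{eu}(M)$. Equating, $\mathrm{deg}(H)\,\mathrm{eu}(\hat M)=\mathrm{eu}(M)$.

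Alternatively — and this may be the cleanest write-up — I would phrase everything through the central extension
\[
0\to\bbZ\to\pi_1(\hat M)\to\pi_1(\hat\Sigma)\to 0,\qquad
0\to\bbZ\to\pi_1(M)\to\pi_1(\Sigma)\to 0,
\]
whose Euler classes in $H^2(\pi_1(\Sigma);\bbZ)\cong\bbZ$ are $\mathrm{eu}(\hat M)$ and $\mathrm{eu}(M)$. The map $H_*:\pi_1(\hat M)\to\pi_1(M)$ sends $\hat c$ to $c^{\mathrm{deg}(H)}$ (this is the fiber-degree statement) and descends to the identity on $\pi_1(\Sigma)$. Pulling back the extension class of $M$ along $\id_{\pi_1(\Sigma)}$ gives $\mathrm{eu}(M)$, while the compatibility of $H_*$ with the extensions shows this pullback class equals $\mathrm{deg}(H)\cdot\mathrm{eu}(\hat M)$ (the multiplication by $\mathrm{deg}(H)$ on the kernel $\bbZ$ multiplies the corresponding $2$-cocycle by $\mathrm{deg}(H)$). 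Hence $\mathrm{eu}(M)=\mathrm{deg}(H)\,\mathrm{eu}(\hat M)$.

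The main obstacle is bookkeeping the orientation and sign conventions so that $\mathrm{deg}(H)$ (which could be negative) enters with the correct sign and matches the convention that $\mathrm{eu}(-M)=-\mathrm{eu}(M)$; concretely, one must be careful that "degree on the fiber" and "number of turns of a meridian around the fiber" are measured with consistent orientations of fiber and base. Everything else is a routine unwinding of the definition of the Euler number as an obstruction to extending a section across the top-dimensional cell.
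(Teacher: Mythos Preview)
Your proposal is correct. Your second approach --- via the central extensions and the observation that $H_*$ sends $\hat c\mapsto c^{\deg(H)}$ while inducing the identity on $\pi_1(\Sigma)$ --- is precisely the paper's argument, which simply carries out this idea by hand on the explicit presentations: writing $H_*(\hat a_i)=a_i c^{i_a}$, $H_*(\hat b_i)=b_i c^{i_b}$, $H_*(\hat c)=c^n$, noting that the central $c$-powers vanish in commutators so $H_*([\hat a_i,\hat b_i])=[a_i,b_i]$, and then applying $H_*$ to the relator $\prod[\hat a_i,\hat b_i]=\hat c^{\,\mathrm{eu}(\hat M)}$ to get $c^{\mathrm{eu}(M)}=c^{\,n\,\mathrm{eu}(\hat M)}$. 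Your cohomological phrasing packages the same computation; the paper's version has the advantage of being completely elementary and self-contained.

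Your first approach, via sections over the $1$-skeleton and winding numbers across the single $2$-cell, is a genuinely different (though standard) route: it is the geometric obstruction-theory picture rather than the group-theoretic one. It gives the same result with comparable effort, and has the merit of making the obstruction interpretation of $\mathrm{eu}$ explicit; the paper's approach avoids any appeal to sections or trivializations and works entirely inside $\pi_1$. Your caution about sign conventions is well placed in either approach.
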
 

This is a special case of a more general theory of Euler numbers
and covering maps.
See, for instance, \cite[\S 3]{jn1983lectures}. 
For completeness, we include a short proof using the fundamental groups.

\begin{proof}
Consider the following presentations for the fundamental groups of $\hat M$ and $M$:

\[ \pi_1(\hat M) = \bigg \langle \hat a_1,\hat b_1, \ldots, \hat a_g, \hat b_g, \hat c \  | \  \prod_{i=1}^g [\hat a_i,\hat b_i] = \hat c^{eu(\hat M)} \ , \ [\hat a_i,\hat c]= \mathrm{id} \ , \ [\hat b_i,\hat c]=\mathrm{id} \bigg \rangle ,  \]
 \[ \pi_1(M) = \bigg \langle a_1,b_1, \ldots, a_g, b_g, c \  | \  \prod_{i=1}^g [a_i,b_i] = c^{eu(M)} \ , \ [a_i,c]= \mathrm{id} \ , \ [b_i,c]=\mathrm{id} \bigg \rangle.  \]

As $H$ lifts the identity, the action on the presentation of the fundamental group we wrote above verifies:
\begin{itemize}
\item for all $i=1,\ldots, g$ there exists $i_a$ so that  $H_\ast (\hat a_i) =a_i c^{i_a}$,
\item for all $i=1,\ldots, g$ there exists $i_b$ so that  $H_\ast(\hat b_i)=b_i c^{i_b}$,
\item and $H_\ast(\hat c)=c^n$ where $n=\mathrm{deg}(H)$. 
\end{itemize}

Notice that this implies that $H_\ast([\hat a_i,\hat b_i]) = [a_i,b_i]$ for all $i=1,\ldots, g$. 

It follows that if $\hat m = \mathrm{eu}(\hat M)$ and $m=\mathrm{eu}(M)$ then:

\[ c^{ m} =  \prod_{i=1}^g [ a_i, b_i]  =  \prod_{i=1}^g [H_\ast(\hat a_i), H_\ast(\hat b_i)] = H_\ast \left(  \prod_{i=1}^g [\hat a_i,\hat b_i] \right) = H_\ast( \hat c^{ \hat m}) = \hat c^{n \hat m}, \]

 \noindent which implies $n \hat m = m$ as desired. 
\end{proof}

\subsection{Proof of Theorems A and B: orientable bundles}\label{ss.ABorientable}

Theorems A and B are a consequence of the following more general result: 

\begin{teo}\label{teo-horizimpliesanosov} 
Assume that $M$ is an orientable circle bundle over an orientable surface $\Sigma$ of genus $g\geq 2$. Assume that $M$ admits a partially hyperbolic diffeomorphism such that the $cs$-foliation has no vertical leaves (in the sense of Theorem \ref{teo-novert}). Then,
$|\mathrm{eu}(M)| = \frac{2g-2}{n}$ for some $n$ which divides $2g-2=-\chi(\Sigma)$. 
\end{teo}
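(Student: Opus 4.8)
\textbf{Proof plan for Theorem \ref{teo-horizimpliesanosov}.}
The plan is to exploit the fact, guaranteed by Theorem \ref{teo-novert} and Corollary \ref{cor-novert}, that after an isotopy the approximating foliation $\cW_\eps$ to $\Fcs$ is everywhere transverse to the fibers of a bundle projection $p_\eps$ isotopic to $p$, and that moreover this horizontal plane field carries a nonvanishing line field, namely the pullback $\hat X^s$ of the strong stable vector field $X^s$ via the fiberwise-$C^1$ map $h_\eps$ (as in the proof of Theorem \ref{teo-product}). After passing to a finite cover and an iterate we may assume all bundles are orientable with $Df$-preserved orientations, so $\hat X^s$ is a genuine nonvanishing vector field tangent to $\cW_\eps$; the Euler number only changes by the (known) degree of the covering, so it suffices to produce the divisibility relation upstairs.

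The core step is to interpret the existence of a horizontal plane field equipped with a tangent line field as forcing $M$ to finitely cover $T^1\Sigma$. First I would recall that a horizontal foliation $\cW_\eps$ on $M$ is, by the usual suspension/developing picture, described by a representation $\rho\colon \pi_1(\Sigma)\to \Homeo^+(S^1)$ together with a section, and the Euler number $\mathrm{eu}(M)$ equals the Euler number of $\rho$; the Milnor--Wood inequality gives $|\mathrm{eu}(M)|\le 2g-2$. The extra input is the line field: a $\rho$-equivariant choice of tangent direction to the leaves along each fiber is exactly a $\rho$-equivariant section of the unit tangent bundle of the leaves, i.e. it trivializes a certain associated $S^1$-bundle and forces the structure group to reduce. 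Concretely, the line field along the horizontal foliation restricted to a fiber $S^1$ has a well-defined index/rotation number, and going around the generators $a_i,b_i$ of $\pi_1(\Sigma)$ the holonomy of $\cW_\eps$ must carry this line field to itself up to homotopy; computing the total index of $\hat X^s$ restricted to a section $S$ (a copy of $\Sigma$ pushed off to be transverse to all fibers, as in Figure \ref{f.trivialbundle}) via Poincar\'e--Hopf relates $\chi(\Sigma)=2-2g$ to $\mathrm{eu}(M)$ times the degree with which $\hat X^s$ winds in each fiber. Writing $n$ for that fiberwise winding number (an integer, since the line field is continuous and the fiber is a circle), one gets $\chi(\Sigma) = n\cdot \mathrm{eu}(M)$, equivalently $|\mathrm{eu}(M)| = \tfrac{2g-2}{n}$ with $n\mid 2g-2$.

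In more detail I would set this up using the section $S$: restrict $\hat X^s$ to $S$ and project to $TS$ to get a vector field $Z$ on $\Sigma$; $Z$ vanishes exactly where $\hat X^s$ becomes vertical (tangent to a fiber), which, because $\cW_\eps$ is horizontal, happens nowhere if $S$ is chosen orthogonal to the fibers — but $S$ need not be globally orthogonal when $\mathrm{eu}(M)\ne 0$, and this is the crux. Instead, one measures the obstruction: the failure of $S$ to be everywhere transverse-compatible with $\hat X^s$ is measured by a relative Euler class, and comparing the two ways of computing the Euler number of the vertical tangent bundle restricted to $S$ — once via the bundle structure ($\mathrm{eu}(M)$) and once via the section provided by $\hat X^s$ (which exists globally and has a well-defined fiber degree $n$) — yields $\chi(\Sigma)=n\,\mathrm{eu}(M)$. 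Equivalently, the horizontal foliation together with its tangent line field defines a fiberwise finite covering $M\to T^1\Sigma$ of degree $n$, and Proposition \ref{prop-Hurwitz} then gives $n\cdot\mathrm{eu}(M)=\mathrm{eu}(T^1\Sigma)=\chi(\Sigma)$.

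The main obstacle is making rigorous the passage ``horizontal foliation $+$ tangent line field $\Rightarrow$ fiberwise covering of $T^1\Sigma$'' with the correct bookkeeping of Euler numbers when $\mathrm{eu}(M)\neq 0$, since then there is no global section orthogonal to the fibers and one must work with the developing map of $\cW_\eps$ and the induced $\rho$-equivariant circle-valued map recording the direction of $\hat X^s$; controlling that this map has a well-defined degree $n$ independent of the fiber, and that this degree is precisely the ratio $\chi(\Sigma)/\mathrm{eu}(M)$, is the heart of the argument. A secondary technical point is checking that the line field $\hat X^s$, a priori only continuous and only defined after the blow-up construction of Theorem \ref{teoBI2}, is regular enough for these index computations — but since index/degree are homotopy invariants and $\hat X^s$ is continuous and nonvanishing, this causes no real difficulty.
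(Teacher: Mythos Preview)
Your overall strategy --- use the nonvanishing horizontal vector field $\hat X^s$ to produce a fiber-preserving map $H:M\to T^1\Sigma$ lifting the identity on $\Sigma$, and then invoke Proposition~\ref{prop-Hurwitz} to obtain $\deg(H)\cdot\mathrm{eu}(M)=\chi(\Sigma)$ --- is exactly the paper's approach. What is missing from your proposal is the actual construction of $H$; the detours through global sections, Poincar\'e--Hopf, representations, and equivariant developing maps are unnecessary and are precisely what lead you to flag a ``main obstacle''. The paper's construction is direct and bypasses all of this: let $\phi^t$ be the flow of $\hat X^s$, fix a metric on $\Sigma$, choose $\delta>0$ small, and for $x\in M$ set $H(x)\in T^1_{p_\eps(x)}\Sigma$ to be the unit tangent to the geodesic from $p_\eps(x)$ to $p_\eps(\phi^\delta(x))$. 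Because $\cW_\eps$ is horizontal, $p_\eps(\phi^\delta(x))\neq p_\eps(x)$, so $H$ is a well-defined continuous map satisfying $p\circ H=p_\eps$; Proposition~\ref{prop-Hurwitz} then applies immediately, with no need to check that $H$ is a covering or to extract a fiberwise winding number by hand.

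One further point: your reduction ``pass to a finite cover to orient the bundles, since the Euler number only changes by the known degree of the covering'' is not safe as stated --- the relevant cover may also change the base surface, and tracking how the divisibility condition descends requires care. The paper avoids this entirely by observing (in a short separate argument) that once $\Ecs$ is horizontal it inherits an orientation from $\Sigma$, forcing $E^s,E^c,E^u$ to be orientable already on $M$; hence no cover is needed.
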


In particular, this theorem applies to partially hyperbolic diffeomorphisms in the hypothesis of Theorem \ref{teo-novert} and therefore it implies both Theorem A and B. 

In this section we prove Theorem \ref{teo-horizimpliesanosov} under the additional assumption that the bundles $\Es,\Ec,\Eu$ are orientable so that we can apply Theorems \ref{teoBI1} and \ref{teoBI2}. In the next subsection we show that this is always the case. 

\medskip

\begin{figure}[ht]
\vspace{-0.5cm}
\begin{center}
\includegraphics[scale=0.35]{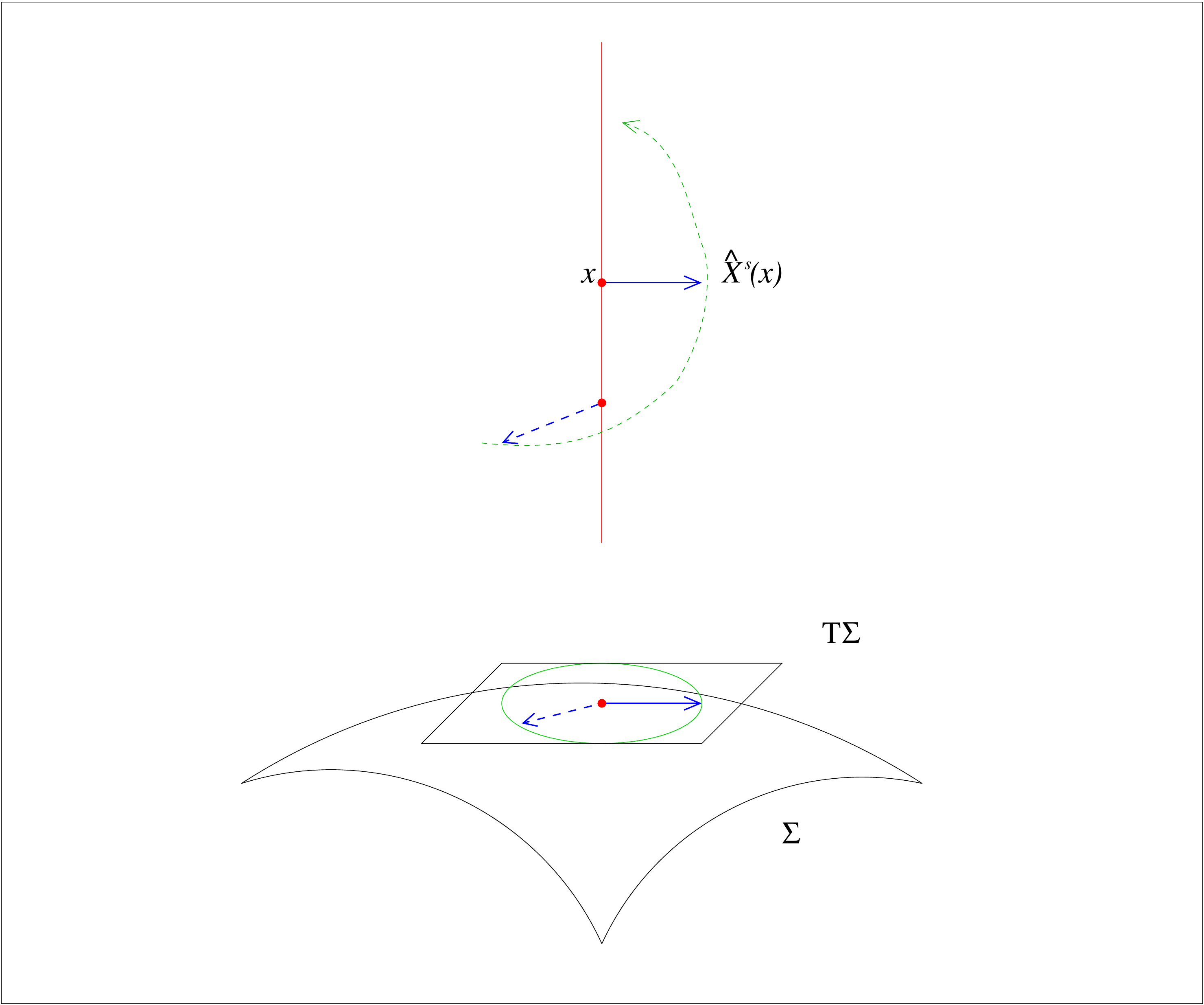}
\begin{picture}(0,0)
\end{picture}
\end{center}
\vspace{-0.5cm}
\caption{A map from the bundle to the unit tangent bundle.\label{f.generalbundle}}
\end{figure}

\medskip

\begin{proof}  As in Corollary \ref{cor-novert} we know that there is a projection $p_\eps$ isotopic to $p$ so that every leaf of the approximating foliation $\cW_\eps$ is transverse to the fibers. 

Consider a non-singular vector field $X^s$ generating the $\Es$-bundle. The map $h_\eps$ allows one to pull back this vector field  (cf. Fact \ref{fact-diffeo}) to obtain a continuous vector field $\hat X^s$ in each leaf of the approximating foliation $\cW_\eps$. Moreover, as $h_\eps$ is a diffeomorphism when restricted to each leaf of $\cW_\eps$ we have that the vector field $\hat X^s$ is uniquely integrable (because $\Es$ is). Let $\phi^t$ be the flow generated by $\hat X^s$. 

We choose a fixed Riemannian metric in $\Sigma$ and fix $\delta>0$ small so that, for every $x \in M$ one has that $p_\eps(\phi^\delta(x))$ is in a small neighbourhood of $p_\eps(x).$ As $\cW_\eps$ is horizontal, we know then that $p_\eps(\phi^\delta(x))\neq x$ and so we can construct for each $x \in M$ a vector $v \in T_{p_\eps(x)}\Sigma$ tangent to the geodesic from $p_\eps(x)$ to $p_\eps(\phi^\delta(x))$. 

We obtain a map $H: M \to T^1\Sigma$ which is continuous as all the choices were continuous. Moreover, one has that if $p: T^1 \Sigma \to \Sigma$ is the canonical projection then $p \circ H = p_\eps$. Applying Proposition \ref{prop-Hurwitz} we complete the proof. 
\end{proof}

\begin{remark}
If the projection $p_\eps$ could be chosen to be smooth, then one could just take the map $M \to T^1\Sigma$ given by $x \mapsto D_xp_\eps(\hat X^s(x))$. As it is not a priori the case, we need to make some slight adjustments to make the idea work. See figure \ref{f.generalbundle}. Notice that Theorem \ref{teo-horizimpliesanosov} does not really require the foliations to come from a partially hyperbolic diffeomorphism, just to have a horizontal foliation admitting a continuous vector field tangent to it. 
\end{remark}

\subsection{The bundles are orientable}\label{ss.nonorientable}

We consider the more general case where the bundles $\Es,\Ec,\Eu$ can be non-orientable. We let $q: \hat M \to M$ be the smallest degree covering which makes all the bundles orientable and choose $\hat f: \hat M \to \hat M$ a lift of some iterate of $f$. In the covering, Theorems \ref{teoBI1} and \ref{teoBI2} apply and so we may use the results of Theorem \ref{teo-novert} to show that the branching foliations are horizontal (notice that hypothesis (1)-(5) in Theorem \ref{teo-novert} still hold when one lifts to a finite cover). 

As $M$ is orientable and $\Sigma$ too, this means that $\hat M$ is an orientable circle bundle over an orientable surface $\hat \Sigma$. This implies that $\hat \Ecs$ and $\hat \Ecu$, being horizontal, inherit the orientation of $\hat \Sigma$ and as deck transformations preserve the orientation of $\hat M$ and of $\hat \Sigma$ they must preserve the orientation of the bundles $\hat \Ecs, \hat \Ecu, \hat \Eu, \hat \Es$ which implies that the bundles $\Es,\Ec,\Eu$ were orientable in the first place. This concludes the proof. 
This same argument involving orientation also appears in Lemma
\ref{lemma:orient} in the next section.

\section{Generalization to Seifert fiber spaces}\label{s.Seifert}

This section proves Theorem C %
and other results specific to Seifert
fiberings.

Consider a partially hyperbolic diffeomorphism $f_2$ defined
on a closed 3-mani\-fold $M_2$ with a Seifert fibering $M_2  \to  \Sig_2$
where $\Sig_2$ is a hyperbolic orbifold.
First consider the case where everything is oriented.
That is, $M_2$, $\Sig_2$, the fibering, and the bundles $\Eu$, $\Ec$, and $\Es$ all have
orientations.
Up to replacing $f_2$ by an iterate, further assume that
$f_2$ preserves the orientation of the bundles.
By Theorem 2.2,
there is a branching foliation $\Fcs_2$ tangent to $\Ecs$.
We consider a leaf $L$ of $\Fcs_2$ to be \emph{vertical} if the embedding of its
fundamental group $\pi_1(L)$ into $\pi_1(M_2)$ contains the cyclic subgroup
generated by a regular fiber of the Seifert fibering.
Our first goal is to show that $\Fcs_2$ has no vertical leaves.

As $\Sig_2$ is a hyperbolic orbifold,
there is a finite (orbifold) covering $\Sig_1  \to  \Sig_2$
where $\Sig_1$ is a closed hyperbolic surface
\cite[Theorem 5.1.5]{cho2012geometric}.
This induces a fiber-preserving covering
$M_1  \to  M_2$ where $M_1$ is a circle bundle.
An iterate of $f_2$ lifts to a partially
hyperbolic map $f_1$ on $M_1$
and
the branching foliation $\Fcs_2$ lifts to a branching foliation $\Fcs_1$
on $M_1$.

Assume $f_2$ satisfies one of the properties (1)--(5) listed in Theorem 3.1.
Then $f_1$ also satisfies the corresponding property
and Theorem 3.1 implies that $\Fcs_1$ has no vertical leaves.
As the covering $M_1  \to  M_2$ preserves fibers,
$\Fcs_2$ has no vertical leaves.
The foliation results of Brittenham and Thurston (Theorem \ref{teo-brit})
hold in the setting of Seifert fiberings, and
we may apply an isotopy to the fibering
so that the fibers are transverse to the branching foliation.
The techniques in the proof of Theorem \ref{teo-horizimpliesanosov}
are entirely local in nature
and may be used here to construct a vector field
which is nowhere tangent to the fibers.
The results in \cite{ham20XXhorizontal}
(which generalize results in the current paper to the Seifert fibered setting)
then show that $M_2$ is a covering space of $\UTSig_2$.
This concludes the proof of Theorem C in the case where everything is
orientable.
Before going to the non-orientable setting, we first analyse
fiber-pre\-ser\-ving
diffeomorphisms on $M_2$.

\begin{lemma} \label{lemma:orient}
    Suppose $\tau$ : $M_2  \to  M_2$ is a fiber-preserving diffeomorphism
    which preserves the partially hyperbolic splitting for which $\Ecs$ and $\Ecu$ are horizontal.
    Then the following are equivalent:{}
    \begin{enumerate}
        \item $\tau$ preserves the orientation of any one (and therefore all)
        of\\
        $\Eu$, $\Es$, $\Ecu$, $\Ecs$;

        \item $\tau$ preserves the orientation of the Seifert fibering;

        \item the induced map $\Sig_2  \to  \Sig_2$ preserves orientation.
          \end{enumerate}  \end{lemma}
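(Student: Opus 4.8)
The plan is to prove the three equivalences by tracking how orientations of the various bundles interact, using the fact that $\Ecs$ and $\Ecu$ are horizontal (transverse to the fibers). The key structural observation is that, since the fibering is horizontal with respect to $\Fcs_2$ and $\Fcu_2$, the tangent bundle $TM_2$ decomposes (up to homotopy) as a direct sum of the line bundle tangent to the fibers and the pullback of $T\Sig_2$, and moreover $\Ecs \oplus \Eu$, $\Ecu \oplus \Es$ each give $TM_2$, while $\Es \oplus \Eu$ projects isomorphically onto $p_\eps^\ast(T\Sig_2)$. So orienting any one of $\Eu, \Es, \Ecu, \Ecs$ is equivalent data to orienting the base $\Sig_2$ once an orientation of the fiber (equivalently of $M_2$) is fixed.

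First I would set up notation: fix the orientation of $M_2$ (which $\tau$ may or may not preserve — but note $\tau$ is fiber-preserving and, by horizontality, the induced map on $\Sig_2$ together with the fiber orientation determines whether $\tau$ preserves the orientation of $M_2$, so one can reduce bookkeeping by working with the pair (fiber orientation, base orientation)). Then I would prove (1) $\Leftrightarrow$ (2)$\wedge$(3)-type relations pairwise. For (2) $\Leftrightarrow$ (3): a fiber-preserving diffeomorphism splits, up to the relevant bundle data, as an action on fibers and an action on the base, and since $M_2$ locally looks like (fiber) $\times$ (base near a regular point), preserving the fibering orientation while the base map reverses orientation would force the fiber action to reverse orientation, and conversely; this is essentially the statement that the local product orientation is the product of the two. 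For (1) $\Leftrightarrow$ (3): because $\Ecs$ is horizontal, the projection $p_\eps : M_2 \to \Sig_2$ restricted to a leaf is a local diffeomorphism onto its image, so $D p_\eps$ identifies $\Ecs$ with $p_\eps^\ast T\Sig_2$ as (unoriented) line... rather, plane bundles — wait, $\Ecs$ is a plane field, $T\Sig_2$ is a plane bundle over a surface, and horizontality gives an isomorphism $\Ecs \cong p_\eps^\ast(T\Sig_2)$ over $M_2$. Hence an orientation of $\Ecs$ corresponds to an orientation of $T\Sig_2$ pulled back, and $\tau$ preserves the former iff the induced map on $\Sig_2$ preserves the latter. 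The same argument with $\Ecu$, and for the line bundles $\Es, \Eu$: once $\Ecs$ (resp. $\Ecu$) is oriented and $\Ec$ is $\Es$... here I should be careful — $\Ecs = \Ec \oplus \Es$ and $\Ecu = \Ec \oplus \Eu$, and $\Es \oplus \Ec \oplus \Eu = TM_2$, so orienting any two of the three line bundles plus $M_2$ orients the third, and $\tau$-invariance of orientations is a consistent package; thus all of $\Eu, \Es, \Ecu, \Ecs$ carry equivalent orientation-preservation information, giving the "any one and therefore all" clause.

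Concretely the argument runs: (i) horizontality $\Rightarrow$ $\Ecs \cong p_\eps^\ast T\Sig_2$ and $\Ecu \cong p_\eps^\ast T\Sig_2$, naturally with respect to fiber-preserving maps; this yields (1 for $\Ecs$) $\Leftrightarrow$ (3) $\Leftrightarrow$ (1 for $\Ecu$). (ii) $\Eu$ is the kernel of $D\tau$-equivariant projection... no: $\Eu$ together with $\Ecs$ spans $TM_2$, so an orientation of $\Eu$ and of $\Ecs$ determines one of $TM_2 = TM_2$; since $\tau$ either preserves or reverses the orientation of $M_2$, and this is itself pinned down by (2) and (3) via the local product, the orientation behaviors of $\Eu$ and $\Ecs$ under $\tau$ are equivalent; similarly for $\Es$ with $\Ecu$, and for $\Ec$ (using $\Ecs = \Ec \oplus \Es$). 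Chaining these gives all of (1) equivalent to each other. (iii) Finally (2) $\Leftrightarrow$ (3): in a local trivialization near a regular fiber $M_2 \cong (\text{arc on }\Sig_2) \times S^1$, the orientation of $M_2$ is the product of base and fiber orientations; $\tau$ fiber-preserving acts as (base map) $\times$ (fiber map), and "preserves fibering orientation" means the fiber map is orientation-preserving, so combined with the $M_2$-orientation behavior this is equivalent to the base map preserving orientation. The main obstacle I anticipate is (iii), specifically handling the exceptional fibers of the Seifert fibering: near an exceptional fiber the local model is a quotient $\bbD \times [0,1] / R_{p/q}$, and one must check the orientation bookkeeping descends correctly through this quotient — but since $\tau$ is fiber-preserving it permutes a neighborhood of each exceptional fiber to a neighborhood of another exceptional fiber of the same type, and the orientation statement at a regular fiber (which form a dense open set) suffices by continuity, so the difficulty is really just making the "local product orientation" claim precise and uniform, and confirming that horizontality of $\Ecs, \Ecu$ is available globally (which it is, after the isotopy of the fibering performed earlier in this section).
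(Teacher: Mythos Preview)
Your outline has a genuine gap: you never establish that $\tau$ preserves the orientation of $M_2$ itself, and without this the chain of equivalences does not close. You explicitly note that $\tau$ ``may or may not preserve'' the orientation of $M_2$ and try to carry this ambiguity through the argument, but it cannot be carried. Concretely: your identification $\Ecs \cong p_\eps^\ast T\Sig_2$ does give (1 for $\Ecs$) $\Leftrightarrow$ (3), and likewise for $\Ecu$. But for the line bundle $\Eu$ you use $\Eu \oplus \Ecs = TM_2$, which only yields ($\tau$ preserves $\Eu$) $\Leftrightarrow$ ($\tau$ preserves $\Ecs$) \emph{provided} $\tau$ preserves the orientation of $TM_2$; if $\tau$ reversed $M_2$, the two would be anti-equivalent. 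The same issue breaks your step (iii): the local product formula (orientation of $M_2$) $=$ (fiber) $\times$ (base) shows only that (2) $\Leftrightarrow$ (3) holds \emph{if} $\tau$ preserves $M_2$, and (2) $\Leftrightarrow$ $\neg$(3) if it reverses $M_2$. So the lemma as stated simply fails for an orientation-reversing fiber-preserving $\tau$ --- unless one rules such $\tau$ out.

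The paper's proof supplies exactly this missing step, and it is not a formality: if $\tau$ reversed the orientation of $M_2$ it would have degree $-1$, forcing the Euler number of the fibering $M_2 \to \Sig_2$ to vanish; lifting to the circle-bundle cover $M_1$ this would give $\mathrm{eu}(M_1)=0$, contradicting what was already proved for circle bundles (Theorem \ref{teo-horizimpliesanosov}). Once $\tau$ is known to be orientation-preserving on $M_2$, your transversality/direct-sum bookkeeping (which is essentially what the paper does too) goes through cleanly. So the fix is to insert this Euler-number argument at the start; the rest of your plan is fine.
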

\begin{proof}
    If $\tau$ reverses the orientation of $M_2$,
    then it is a map of degree -1.
    This implies that the Euler number of $M_2$ equals zero.
    Then the covering space $M_1$ defined above
    also has Euler number zero
    which we have already ruled out in the case of circle bundles.
    Thus, $\tau$ must preserve the orientation of $M_2$.
    (See also \cite[Theorem 3.1]{ehn1981transverse}.)

    Since $\Ecs$ is transverse to the fibering
    and both $\Ecs$ and the fibering are oriented,
    the derivative $D \tau$ must either preserve both of these orientation
    or reverse both.
    As $\Ecs$ is transverse to $\Eu$,
    $D \tau$ must either preserve both orientations or reverse both.
    Similar arguments
    hold for $\Ecu$ with the fibering and for $\Ecu$ with $\Es$.
    Altogether, these show that (1) and (2) are equivalent.

    Since $\tau$ preserves the orientation of $M_2$, (2) and (3) are equivalent as the distributions $\Ecs$ and $\Ecu$ (being horizontal) carry an orientation which is in correspondence with the orientation of $\Sigma$.
\end{proof}
Now, suppose $f_3$ is a partially hyperbolic map defined on a
manifold with Seifert fibering $M_3  \to  \Sig_3$
where $\Sig_3$ is an oriented hyperbolic orbifold.
Then there is a finite normal covering $M_2  \to  M_3$
and a lift $f_2$ of an iterate of $f_3$ so that the fibering on $M_2$
and all of the bundles
in the splitting $TM_2 = \Eu \oplus \Ec \oplus \Es$ are oriented.
The fibering on $M_2$ is given by a map of the form $M_2  \to  \Sig_2$
where $\Sig_2$ orbifold covers $\Sig_3$.
Since $\Sig_3$ is oriented,
any deck transformation $\tau$ : $M_2  \to  M_2$ of the covering 
$M_2  \to  M_3$ induces an orientation preserving map on $\Sig_2$.
\Cref{lemma:orient} then shows that $\tau$ preserves the orientations
of $\Eu$, $\Ec$, and $\Es$, and the fibering.
Since this holds for all deck transformations, it implies that
all of the bundles in the splitting
$TM_3 = \Eu \oplus \Ec \oplus \Es$ are oriented as well as the fibering on $M_3$.
Thus, $f_3$ on $M_3$ is already in the ``everything is oriented'' case
and a lift to $M_2$ is not necessary.
This shows that Theorem C holds for any Seifert fiber space over an
oriented hyperbolic orbifold.

\medskip

The final case is a partially hyperbolic map $f_4$
defined on a Seifert fibering $M_4  \to  \Sig_4$ where
$\Sig_4$ is a non-orientable hyperbolic orbifold.
Let $\Sig_3$ be the oriented double cover of $\Sig_4$.
Then the map $\Sig_3  \to  \Sig_4$ defines a fiber-preserving covering
$M_3  \to  M_4$ and a lifted map $f_3$ on $M_3$.
(See \cite[Corollary 3.2]{jn1983lectures} for details.)
Since $M_3$ is a double cover of $M_4$,
there is a deck transformation $\tau : M_3 \to M_3$
which induces an orientation reversing map on $\Sig_3$.
\Cref{lemma:orient} implies that $\tau$ reverses the orientation of fibers.
As it is a deck transformation, $\tau$ has no fixed points and therefore
does not fix any fibers.
This implies that $M_3$ has exactly two copies of every exceptional fiber
in $M_4$ and
the Euler numbers satisfy
\begin{math}
    eu(M_3 \to \Sig_3) = 2 eu(M_4 \to \Sig_4).
\end{math}
Using these properties and the fact that $M_3$ is a $d$-fold cover of $\UTSig_3$,
one may show that $M_4$ has the same Seifert invariant as
a $d$-fold cover of $\UTSig_4$.
Since the Seifert invariant uniquely determines the space,
$M_4$ finitely covers $\UTSig_4$.
This concludes the proof of Theorem C for a Seifert fibering over any
hyperbolic orbifold.

\bigskip

\begin{proof}[Proof of \cref{thm:turnover}]
    Suppose $M  \to  \Sig$ is a Seifert fibering over a turnover.
    If $\Sig$ is not hyperbolic, the results in
    \cite[Appendix A]{HPSol}
    imply that $M$ does not support a partially hyperbolic system.
    Therefore, we restrict to the case that $\Sig$ is hyperbolic.
    Then, \cite[Corollary 4 and Proposition 6]{Brittenham}
    imply that the approximating foliation to the $cs$-branching foliation
    has no vertical leaves.
    As in the last proof,
    one can show that $M$ finitely covers $T^1 \Sig$
    and therefore $M$ supports an Anosov flow.
\end{proof}
\begin{proof}[Proof of \cref{cor:turnex}]
    In this proof, we write Seifert invariants using the same conventions as in
    \cite{jn1983lectures}.
    Consider a Seifert fiber space over a hyperbolic turnover
    such that the fibering $M  \to  \Sig$ has
    Seifert invariant
    \[
        (0; (1,-1), (\al_1, \bt_1), (\al_2, \bt_2), (\al_3, \bt_3))
    \]
    with $0 < \bt_i < \al_i$ for all $i$.
    Up to a choice of orientation,
    the Seifert invariant of the unit tangent bundle $T^1 \Sig$
    may be written as
    \[
        (0; (1,-1), (\al_1, 1), (\al_2, 1), (\al_3, 1)).
    \]
    Suppose $M$ covers $T^1 \Sig$.
    As both manifolds have unique Seifert fiberings
    \cite[Theorem 5.2]{jn1983lectures},
    we may assume this covering is fiber preserving.
    As both $M$ and $T^1 \Sig$ have the same base orbifold,
    the formula for coverings given in
    \cite[Proposition 2.5]{jn1983lectures} implies that
    the fibering on $T^1 \Sig$ has Seifert invariant
    \[
        (0; (1,-d), (\al_1, d \bt_1), (\al_2, d \bt_2), (\al_3, d \bt_3))
    \]
    where the integer $d$ is the degree of the covering.
    The properties of Seifert invariants then imply that
    $d \bt_i \equiv 1 \mod \al_i$ for all $i$.
    For most choices of $\bt_i$, no such $d$ with this property exists
    and so there are many Seifert fiber spaces over $\Sig$
    which do not cover $T^1 \Sig$.

    The exact conditions for $M$ to support a horizontal foliation are
    complicated to state \cite{nai1994foliations}.
    See also \cite[Section 4.11]{Calegari}. %
    However, a sufficient condition is that the inequality
    \[ 
         \frac{\bt_1}{\al_1} +
         \frac{\bt_2}{\al_2} +
         \frac{\bt_3}{\al_3} < 1
           \]
    holds \cite[Equation 5.2 and Proposition 5.3]{ehn1981transverse}.
    Using this, one may find many possibilities for $M$
    which support horizontal foliations,
    but do not cover $T^1 \Sig$.
    For instance, if $\al  \ge  5$ is an odd number, then
    \[
        (0; (1,-1), (\al, 1), (\al, 2), (\al, 1))
    \]
    gives such an example.
\end{proof}

\appendix

\section{Proof of Proposition \ref{prop-quasigeodesic}} 

In this appendix we prove Proposition \ref{prop-quasigeodesic}. The key point is to establish that every curve in the family is a quasi-geodesic which is the purpose of the following:

\begin{lemma}\label{lem-rayinfinity}
Let $\eta: \mathbb{R} \to \mathbb{H}^2$ be a properly embedded curve and $\Gamma < \mathrm{PSL}(2,\mathbb{R})$ the set of deck transformations for a given compact surface $\Sigma$. Assume that for every $g \in \Gamma$ it follows that $g \eta $ and $\eta$ do not have topologically transverse intersections. Then, $\lim_{t\to +\infty} \eta(t)$ exists in $\partial \mathbb{H}^2$. Moreover, if both components of $\mathbb{H}^2 \setminus \eta$ have infinite area, then $\lim_{t\to +\infty} \eta(t) \neq \lim_{t\to -\infty} \eta(t).$
\end{lemma}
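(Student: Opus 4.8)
The plan is to analyze the curve $\eta$ through its lifts under the deck group $\Gam$ and exploit the fact that $\eta$ cannot be crossed transversally by any translate $g\eta$. First I would fix a compact fundamental domain $D \subof \mathbb{H}^2$ for the action of $\Gam$, so that $\mathbb{H}^2 = \bigcup_{g \in \Gam} gD$ and each $gD$ has diameter bounded by some $\diam(D)$. Since $\eta$ is properly embedded, $\eta(t) \to \infty$ in $\mathbb{H}^2$ as $t \to +\infty$, so $\eta$ eventually leaves every compact set and in particular passes through infinitely many of the translates $gD$. The key observation is that if $\eta$ entered some translate $gD$, left it, and came back, then — because $g^{-1}\eta$ is a translate of $\eta$ that does not cross $\eta$ transversally and $D$ is compact with bounded geometry — one gets control preventing $\eta$ from oscillating: more precisely, I would show that the coarse path $t \mapsto (\text{the translate } gD \text{ containing } \eta(t))$ is a quasi-geodesic in the Cayley graph of $\Gam$, equivalently that $\eta$ is a quasi-geodesic in $\mathbb{H}^2$ up to bounded error. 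The non-self-crossing condition under all of $\Gam$ is exactly what rules out the pathological back-tracking that a general proper embedding could exhibit.

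The main step is therefore: the curve $\eta$ is a quasi-geodesic ray (in forward and backward time). Once this is established, the existence of $\lim_{t\to+\infty}\eta(t)$ in $\partial\mathbb{H}^2$ follows from the standard stability of quasi-geodesics in Gromov-hyperbolic spaces (the Morse lemma): $\eta|_{[0,\infty)}$ stays within bounded Hausdorff distance of a genuine geodesic ray, and geodesic rays converge to a well-defined boundary point. The same argument applied to $t \to -\infty$ produces $\lim_{t\to-\infty}\eta(t)$.

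For the quasi-geodesic claim itself, I would argue as follows. Suppose, for contradiction, that $\eta$ fails to be a quasi-geodesic; then there are times $s < t$ with $d_{\mathbb{H}^2}(\eta(s),\eta(t))$ small compared to the in-$\eta$ length (or: $\eta$ returns close to a translate of an earlier portion of itself). Using the compact fundamental domain, one finds $g \in \Gam$ such that $g\eta$ runs close to $\eta$ over a long stretch and then the two curves must separate; combined with properness, $g\eta$ is forced to cross $\eta$ transversally, contradicting the hypothesis. This ``fellow-traveling forces crossing'' mechanism is the technical heart, and I expect it to be the main obstacle: one must handle the $C^0$ (merely embedded, not smooth) nature of $\eta$ and make the notion of ``topologically transverse intersection'' precise enough to extract a genuine crossing, which likely requires a general-position / Jordan-curve argument on the sphere $\mathbb{H}^2 \cup \partial\mathbb{H}^2$ or an innermost-arc argument.

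Finally, for the last assertion, suppose both complementary components of $\mathbb{H}^2 \sans \eta$ have infinite area but $\lim_{t\to+\infty}\eta(t) = \lim_{t\to-\infty}\eta(t) =: \xi \in \partial\mathbb{H}^2$. Then $\eta$, together with the single ideal point $\xi$, bounds a region; since $\eta$ is a quasi-geodesic with both ends at $\xi$, it lies within bounded distance of a geodesic ray (or a horocycle-like degenerate geodesic) limiting to $\xi$, and one of the two complementary components is then contained in a bounded neighborhood of such a ray, hence is a ``thin'' region. I would quantify this: a region bounded by a curve within Hausdorff distance $R$ of a geodesic asymptotic to a single point $\xi$ on both ends has finite area (it is contained in an $R$-neighborhood of a geodesic, which has finite area once it is ``pinched'' at a single ideal point on both sides), contradicting that both sides have infinite area. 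The only subtlety is to rule out the degenerate picture where the geodesic is actually a horocycle at $\xi$; but a horocycle separates $\mathbb{H}^2$ into a horoball (finite area is false — but it does not return to $\xi$ from both sides as a proper curve of bounded backtracking would), so this case does not genuinely arise for a quasi-geodesic with both endpoints equal. Thus both complementary components cannot simultaneously have infinite area, completing the proof.
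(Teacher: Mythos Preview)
Your proposal has a genuine gap at its core step: you assert that $\eta$ is a quasi-geodesic, but the argument you sketch (``fellow-traveling forces crossing'') does not work as stated. Failure of quasi-geodesicity means there are times $s<t$ with $d(\eta(s),\eta(t))$ small compared to $t-s$; this tells you that some translate $g\eta$ comes close to $\eta$ at one point, but it does not force $g\eta$ to run alongside $\eta$ and then separate on opposite sides. Two proper embedded curves can be close at one point and then diverge without ever crossing topologically --- one can simply stay on a fixed side of the other. You acknowledge this is ``the main obstacle,'' but nothing in the outline explains how to overcome it, and the Jordan-curve or innermost-arc arguments you mention do not by themselves produce a crossing from mere proximity. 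Note also that the paper actually deduces quasi-geodesicity \emph{from} the conclusion of this lemma (see the sentence following the proof), so your order of implications is reversed relative to what is being proved.

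The paper's approach is quite different and avoids quasi-geodesicity entirely. For the first part it studies the accumulation set of $\eta(t)$ in $\partial\mathbb{H}^2$ directly: this set is a connected closed subset of the circle, and if it were a nontrivial arc $[a,b]$ one chooses $g\in\Gamma$ with $g[a,b]\subset(a,b)$ and gets a crossing; if it were the whole circle one closes a long subarc of $\eta$ near $\partial\mathbb{H}^2$ to a Jordan curve and uses a hyperbolic $g\in\Gamma$ to trap its interior, producing a fixed point in $\mathbb{H}^2$. For the second part the paper uses the infinite-area hypothesis to find a nontrivial $g\in\Gamma$ with $gD\cap D\neq\emptyset$ (where $D$ is the component touching the single ideal point $z_0$), forces $g z_0=z_0$, and then iterates to manufacture a translate with a linked pair of ideal endpoints, hence a transverse crossing. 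Your finite-area argument for the second part is also incomplete: a bi-infinite quasi-geodesic can in fact have both ideal endpoints equal (it can sit in a bounded neighbourhood of a single geodesic ray), and in that situation neither complementary region is obviously contained in a finite-area set, so the ``thin region'' picture you invoke needs substantial further justification.
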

\begin{proof} See Levitt \cite[Lemma 1]{Levitt} for a similar statement. %

Since the curve is properly embedded, the accumulation points of sequences $\eta(t_k)$ with $t_k \to \infty$ form a  non-empty compact connected subset of the boundary at infinity $\partial \mathbb{H}^2$ of $\mathbb{H}^2$. We will abuse notation and also use $\eta$ to denote the image $\eta(\mathbb{R})$.

Assume first by contradiction that these accumulation points form a non-trivial interval $[a,b]$ with $a\neq b$. Choose a deck transformation $g \in \Gamma$ such that $g [a,b] \subset (a,b)$. (Such a $g$ exists because $\mathbb{H}^2/_\Gamma$ is compact.) Notice that as $\eta$ and $g  \eta$ do not traverse each other, $g \eta$ cannot accumulate in $g [a,b]$ without accumulating in all the interval $(a,b)$. This gives a contradiction. 

To conclude, we must rule out that the set of accumulation points is the whole $\partial \mathbb{H}^2$. In this case, we choose $g \in \Gamma$. As $g$ is a hyperbolic element, it has an attractor and a repeller in $\partial \mathbb{H}^2$. One can choose a long arc $\eta_0 \subset \eta$ close to $\partial \mathbb{H}^2$ which can be closed to a Jordan curve $J$ by an arc $\alpha$ which is close to the boundary and far from the attractor and the repeller of $g$. If $\eta_0$ is sufficiently close to $\partial \mathbb{H}^2$ then $\alpha$ can be chosen in such a way that $g \alpha \cap \alpha = \emptyset$ and such that there is a point in the interior of $J$ which is mapped in the interior of $J$. This implies that $g$ or $g^{-1}$ sends the interior of the Jordan curve into itself which implies the existence of a fixed point of $g$ in $\mathbb{H}^2$ which is a contradiction. This completes the proof of the existence of $\lim_{t\to \infty} \eta(t)$. 

Let us call $\eta^+ = \lim_{t\to +\infty} \eta(t)$ and $\eta^- = \lim_{t \to -\infty} \eta(t)$. Assume that $z_0 = \eta^+ = \eta^-$ and let $D$ be the connected component of $\mathbb{H}^2 \setminus \eta$ whose only accumulation point in the boundary $\partial \mathbb{H}^2$ is $z_0$. 

Our hypothesis implies that $D$ has infinite area, and therefore there is a deck transformation $g \in \Gamma$ such that $g D \cap D \neq \emptyset$. One must have that $g z_0 = z_0$ as otherwise one would obtain a transverse intersection point between $g \eta$ and $\eta$. Without loss of generality, we assume that $z_0=g^-$ the repeller of $g$; this implies that say $g^{-1} D \subset D$ and if one considers $\hat D = \bigcup_{n>0} g^n(D)$ one has that $\hat D$ contains a properly embedded curve $\xi$ whose limits are $g^+$ and $g^-=z_0$ which are different. Choose $g' \in \Gamma$ such that $g' \xi \cap \xi$ transversally (i.e. the points at infinity are linked). This implies for large enough $n>0$ that $g' \circ g^n(D)$ intersects $g^n(D)$, and as $g' z_0 \neq z_0$ one obtains a transverse intersection of $\eta$ with $g^{-n} g' g^n \eta$, a contradiction. This completes the proof of the lemma.
\end{proof}

It follows from Lemma \ref{lem-rayinfinity} that every $\eta_i \in \mathcal{A}$ is $b_i$-quasi-isometric for some $b_i$. As the familly is closed by taking limits it is a compact family of curves and then the argument in \cite[Lemma 10.20]{Calegari} applies verbatim to give Proposition \ref{prop-quasigeodesic}.

\bigskip

\end{document}